\newcommand{\Star}{\scalebox{1.4}{$\star$}}
\newcommand{\blue}[1]{{\color{blue}#1}}
\def\l@section{\@tocline{1}{0pt}{1pc}{}{}}
\def\l@subsection{\@tocline{2}{0pt}{1pc}{4.6em}{}}
\def\l@subsubsection{\@tocline{3}{0pt}{1pc}{7.6em}{}}
\renewcommand{\tocsection}[3]{%
  \indentlabel{\@ifnotempty{#2}{\makebox[2.3em][l]{%
    \ignorespaces#1 #2.\hfill}}}#3}
\renewcommand{\tocsubsection}[3]{%
  \indentlabel{\@ifnotempty{#2}{\hspace*{2.3em}\makebox[2.3em][l]{%
    \ignorespaces#1 #2.\hfill}}}#3}
\renewcommand{\tocsubsubsection}[3]{%
  \indentlabel{\@ifnotempty{#2}{\hspace*{4.6em}\makebox[3em][l]{%
    \ignorespaces#1 #2.\hfill}}}#3}
\theoremstyle{plain}
\numberwithin{equation}{section}
\newtheorem{theorem}{Theorem}[section]
\newtheorem{corollary}[theorem]{Corollary}
\newtheorem{lemma}[theorem]{Lemma}
\newtheorem{proposition}[theorem]{Proposition}
\newtheorem{notation}[theorem]{Notation}
\newtheorem{definition}[theorem]{Definition}
\newtheorem{remark}[theorem]{Remark}
\newtheorem{main theorem}{Main Theorem}
\newcommand{\Z}{\mathbb{Z}}
\newcommand{\bs}[1]{\boldsymbol{#1}}
\newcommand{\mbff}{\scalebox{1.1}{$\boldsymbol{\mathcal{F}}$}}
\newcommand{\mff}{\mathfrak{F}}
\newcommand{\bN}{\mathbb{N}}
\newcommand{\bC}{\mathbb{C}}
\newcommand{\sP}{\mathscr{P}}
\newcommand{\be}{\begin{equation}}
\newcommand{\ee}{\end{equation}}
\renewcommand{\leq}{\leqslant}
\renewcommand{\geq}{\geqslant}
\begin{document}

\title{The  Quantum Perron-Frobenius Space}
\author{Linzhe Huang}
\address{Linzhe Huang, Yau Mathematical Sciences Center, Tsinghua University, Beijing, 100084, China}
\email{huanglinzhe@mail.tsinghua.edu.cn}

\author{Arthur Jaffe}
\address{Arthur Jaffe, 17 Oxford Street, Harvard University, Cambridge, MA 02138, USA}
\email{Arthur\_Jaffe@harvard.edu}

\author{Zhengwei Liu}
\address{Zhengwei Liu, Yau Mathematical Sciences Center and Department of Mathematics, Tsinghua University, Beijing, 100084, China, and Yanqi Lake Beijing Institute of Mathematical Sciences and Applications, Huairou District, Beijing, 101408, China}
\email{liuzhengwei@mail.tsinghua.edu.cn}

\author{Jinsong Wu}
\address{Jinsong Wu, Yanqi Lake Beijing Institute of Mathematical Sciences and Applications, Beijing, 101408, China}
\email{wjs@bimsa.cn}
\date{}

\maketitle

\begin{abstract}
We introduce $\mathfrak{F}$-positive elements in planar algebras. We establish the Perron-Frobenius theorem for $\mathfrak{F}$-positive elements.
We study the existence and uniqueness of the Perron-Frobenius eigenspace. When it is not one-dimensional, we characterize its  multiplicative structure.
Moreover, we consider the Perron-Frobenius eigenspace as the space of logical qubits for mixed states in quantum information. We describe the relation between these mathematical results and quantum error correction, especially to the Knill-Laflamme theorem. 
\end{abstract}

\tableofcontents
\newpage

\section{Introduction}
The Perron-Frobenius (PF) theorem~\cite{Per08, Fro0809,Fro12}, proved in the early 1900s, is a fundamental result for positive matrices, with applications in many branches of mathematics, including  probability theory~\cite{Sen06}, dynamical systems~\cite{CA22}, graph theory~\cite{BC09}, etc., as well as in science and engineering~\cite{PSC05,LM06}:  power control of  wireless networks, to commodity pricing models, and the ranking of web pages.  The PF theorem also leads to analyticity in some statistical mechanical systems, showing uniqueness for ground states and the absence of phase transitions~\cite{Ruelle}. 

Krein and Rutman generalized the PF theorem to positive, compact operators on ordered Banach spaces~\cite{KR48}. 
Glimm and Jaffe established a PF theorem for certain positive, non-compact operators and used this to prove uniqueness of  the ground state for quantum field theory Hamiltonians~\cite{GJ70}. 
Evans and H{\o}egh-Krohn established a PF theorem  for positive maps and studied multiplicative properties of the spectrum of positive (not necessarily completely positive) maps~\cite{EH78}. 
In quantum information,  a quantum channel is a completely positive, trace preserving linear map. The
PF theorem for quantum channels is called the quantum PF theorem.

In this paper, we interpret the  quantum PF theorem for completely positive maps in terms of  tensor networks, a diagrammatic language in quantum information; this  provides unity between the classic PF theorem and a quantum version. 
Furthermore, we establish the PF theorem on quantum symmetries in terms of Jones' subfactor planar algebras \cite{Jon83,Jon99,Jon12}.  
The tensor network case becomes a special case of spin-model planar algebras.

We call an element $\mathfrak{F}$-positive if its Fourier transform is a positive operator. 
This $\mathfrak{F}$-positivity is a generalization of the complete positivity of a quantum channel in the tensor network approach to quantum information theory. 
We elaborate the relation of this concept to 
the Perron-Frobenius theorem for planar algebras. 
Moreover, we  give several,  easy-to-check,  sufficient conditions for irreducibility of an $\mathfrak{F}$-positive element.  We obtain a  Collatz-Wielandt formula for planar algebras.
We provide a self-contained proof of the PF theorem for (not-necessarily compact) positive linear maps on an ordered Banach space.  

When the PF eigenvector is not unique, we study its PF eigenspace $\mathcal{E}$.
Inspired by quantum error correction, we consider an  $\mathfrak{F}$-positive element as a quantum channel, arising from the composition of an error channel, a recovery map, and an encoding map. 
We apply the PF eigenspace to encode logical qubits which are preserved by the quantum channel, up to a scalar. 
The size of logical qubits  depends on the multiplicative structure of density matrices, so it is insufficient to simply count the dimension of the PF eigenspace. This is our major motivation to study the multiplicative structure of the PF eigenspace, corresponding to the matrix product of density matrices. The multiplicative structure on the PF eigenspace that we study is different from the earlier approach of Choi and Effros \cite{ChoEff77}.

Density matrices of logical qubits are $*$-invariant.
We show that the maximal $*$-invariant subspace of the PF eigenspace is decomposed as the multiplication of a maximally-supported PF eigenvector $\zeta$ with a $C^*$-algebra $\mathcal{C}$. In quantum information,
we could regard $\zeta$ as a mixed state of ancillary qubits and a subalgebra $\mathcal{D}\cong M_{2^k}(\mathbb{C})$ as $k$ logical qubits. This PF theoretical approach to quantum error corrections suggests us to encode logical information by mixed states. 

The paper is organized as follows:
 In \S \ref{sec:pre} we recall the classic PF theorem and quantum PF theorem in the tensor network language. Moreover, we also recall basic notions of Jones planar algebras, leading to our study in \S\ref{sec:PF theorem} of PF theory in the planar algebra context. In \S \ref{sec:PFT for Banach space} we give a self-contained proof of the approximate existence and uniqueness results for PF eigenvectors of (noncompact) positive operators on infinite-dimensional, ordered Banach spaces. 
 In \S\ref{sec:structure} we characterize the multiplicative structures of the PF eigenpaces of $\mathfrak{F}$-positive elements. 
 
In \S\ref{sec:QEC} we relate our results to quantum information, and in particular to quantum error correction in a mixed state. 
In Theorem \ref{thm:PF to QEC},
we state the quantum error correction  for mixed states in the PF eigenspace. Moreover, we  contrast our approach to the widely known Knill-Laflamme theory.
In our approach, we choose the recovery map as the pseudo inverse of the error map similar to the ones in the Knill-Laflamme theory. The resulting quantum error correcting codes of mixed states has a close connection to those codes of pure states in  Knill-Laflamme theory.
It would be interesting to explore other types of recovery maps and to  study the PF eigenspaces to encode logical information. 

\section{Preliminary}\label{sec:pre}
In this section, we recall the classic PF theorem for positive matrices and interpret quantum PF theorem for completely positive maps in tensor networks, and then we show that the tensor network representations of matrices and completely positive maps could be unified  in a more general framework of planar algebras. We also introduce a 4-string graphical representation of positive matrices based on the Quon language in the end of the section.

\subsection{Tensor Networks and Classic PF Theorem} In quantum information, 
tensor networks make it convenient to compute vectors, matrices and high rank tensors. Let us recall some tensor network notations and operations. 
A complex vector $v\in\mathbb{C}^n$ is a one tensor with label $j$, which means the $j^{th}$ component can be represented pictorially as:
\begin{align}
    v_j=\raisebox{-0.2cm}{
\begin{tikzpicture}[rotate=90]
\draw (0,-1)--(0,0);
\draw [blue, fill=white] (-0.3,-0.3) rectangle (0.3,0.3);
\node at (0,0) {$v$};
\node at (0.25,-0.65) {$j$};
\end{tikzpicture}}\;.
\end{align}
A matrix $A\in M_n(\mathbb{C})$ is a two tensor with two labels $i$ and $j$, which means the $i^{th}$ row and $j^{th}$ column are represented pictorially as
\begin{align}
    A_{ij}=\raisebox{-0.2cm}{
\begin{tikzpicture}[rotate=90]
\draw (0,-1)--(0,1);
\draw [blue, fill=white] (-0.3,-0.3) rectangle (0.3,0.3);
\node at (0,0) {$A$};
\node at (0.25,0.65) {$j$};
\node at (0.25,-0.65) {$i$};
\end{tikzpicture}}\;.
\end{align}
A matrix $A\in M_n(\mathbb{C})$ acting on a vector $v$ is the vertical composition:
\begin{align}
   (Av)_{i}=\sum_j A_{ij}v_{j}= \raisebox{-0.2cm}{
\begin{tikzpicture}[rotate=90]
\draw (0,-2)--(0,0);
\draw [blue, fill=white] (-0.3,-0.3) rectangle (0.3,0.3);
\node at (0,0) {$v$};
\begin{scope}[shift={(0, -1)}]
\draw [blue, fill=white] (-0.3,-0.3) rectangle (0.3,0.3);
\node at (0,0) {$A$};
\node at (0.25,-0.7) {$i$};
\end{scope}
\end{tikzpicture}}.
\end{align}
Similarly, the multiplication for two matrices $A,B\in  M_n(\mathbb{C})$ is the picture:
\begin{align}
   (AB)_{ij}=\sum_k A_{ik}B_{kj}= \raisebox{-0.2cm}{
\begin{tikzpicture}[rotate=90]
\draw (0,-2)--(0,1);
\draw [blue, fill=white] (-0.3,-0.3) rectangle (0.3,0.3);
\node at (0,0) {$B$};
\begin{scope}[shift={(0, -1)}]
\draw [blue, fill=white] (-0.3,-0.3) rectangle (0.3,0.3);
\node at (0,0) {$A$};
\node at (0.25,1.7) {$j$};
\node at (0.25,-0.7) {$i$};
\end{scope}
\end{tikzpicture}}\;.
\end{align}
For the trace of $A$, one can connect the output to the input as,
\begin{align}
     {\rm Tr}(A)=\sum_i A_{ii}=\raisebox{-0.5cm}{
\begin{tikzpicture}[rotate=90]
\draw [blue, fill=white] (-0.3,-0.3) rectangle (0.3,0.3);
\node at (0,0) {$A$};
\draw (0, 0.3) .. controls +(0, 0.3) and +(0, 0.3) .. (0.6, 0.3);
\draw (0, -0.3) .. controls +(0, -0.3) and +(0, -0.3) .. (0.6, -0.3);
\draw(0.6,0.3)--(0.6,-0.3);
\end{tikzpicture}}\;.
\end{align}
We can easily see  the cyclic property, which proves  ${\rm Tr}(AB)={\rm Tr}(BA)$, namely 
\begin{align}
     {\rm Tr}(AB)=\raisebox{-0.45cm}{
\begin{tikzpicture}[rotate=90]
\draw(0.6,1.1)--(0.6,-0.3) (0,1.1)--(0,-0.3);
\draw [blue, fill=white] (-0.3,-0.3) rectangle (0.3,0.3);
\node at (0,0) {$A$};
\begin{scope}[shift={(0,0.8)}]
\draw [blue, fill=white] (-0.3,-0.3) rectangle (0.3,0.3);
\node at (0,0) {$B$};
\end{scope}
\draw (0, 1.1) .. controls +(0, 0.3) and +(0, 0.3) .. (0.6, 1.1);
\draw (0, -0.3) .. controls +(0, -0.3) and +(0, -0.3) .. (0.6, -0.3);
\end{tikzpicture}}=\raisebox{-0.6cm}{
\begin{tikzpicture}[rotate=90]
\draw [blue, fill=white] (-0.3,-0.3) rectangle (0.3,0.3);
\node at (0,0) {$A$};
\begin{scope}[shift={(0.8,0)}]
\draw [blue, fill=white] (-0.3,-0.3) rectangle (0.3,0.3);
\node at (0,0) {\rotatebox{180}{$B$}};
\end{scope}
\draw (0, 0.3) .. controls +(0, 0.3) and +(0, 0.3) .. (0.8, 0.3);
\draw (0, -0.3) .. controls +(0, -0.3) and +(0, -0.3) .. (0.8, -0.3);
\end{tikzpicture}}=\raisebox{-0.6cm}{
\begin{tikzpicture}[rotate=90]
\draw [blue, fill=white] (-0.3,-0.3) rectangle (0.3,0.3);
\node at (0,0) {$A$};
\begin{scope}[shift={(0.8,0)}]
\draw [blue, fill=white] (-0.3,-0.3) rectangle (0.3,0.3);
\node at (0,0) {$B^T$};
\end{scope}
\draw (0, 0.3) .. controls +(0, 0.3) and +(0, 0.3) .. (0.8, 0.3);
\draw (0, -0.3) .. controls +(0, -0.3) and +(0, -0.3) .. (0.8, -0.3);
\end{tikzpicture}}=
\raisebox{-0.45cm}{
\begin{tikzpicture}[rotate=90]
\draw(0.6,1.1)--(0.6,-0.3) (0,1.1)--(0,-0.3);
\draw [blue, fill=white] (-0.3,-0.3) rectangle (0.3,0.3);
\node at (0,0) {$B$};
\begin{scope}[shift={(0,0.8)}]
\draw [blue, fill=white] (-0.3,-0.3) rectangle (0.3,0.3);
\node at (0,0) {$A$};
\end{scope}
\draw (0, 1.1) .. controls +(0, 0.3) and +(0, 0.3) .. (0.6, 1.1);
\draw (0, -0.3) .. controls +(0, -0.3) and +(0, -0.3) .. (0.6, -0.3);
\end{tikzpicture}}={\rm Tr}(BA).
\end{align}
We refer the readers to e.g. \cite{BriChu17} for more details of tensor networks.

Recall that a matrix $A\in M_n(\mathbb{C})$ is called pointwise positive if $A_{ij}\geq0$; is called strictly pointwise positive if $A_{ij}>0$. A 
positive matrix $A$ is irreducible if $(I+A)^{n-1}$ is strictly pointwise positive. 
Now we restate the classic PF theorem in tensor networks.

\begin{theorem}[\bf The Classic PF Theorem~\cite{Per08,Fro0809,Fro12}]\label{thm:CPF}
Suppose $A\in M_n(\mathbb{C})$ is a pointwise-positive matrix, i.e., $A_{ij}\geq0$. Then there exists a nonzero-positive vector $v$, i.e., $v_j\geq0$, such that
$Av=r(A)v$, 
where $r(A)$ is the spectral radius of $A$. If $A$ is irreducible, then there exists a unique (up to a scalar) strictly positive vector $v$, i.e., $v_j>0$, satisfying
 Equation \eqref{eq:matrix PF vector} and  $r(A)$ is a simple eigenvalue.
\end{theorem}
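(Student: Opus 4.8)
The plan is to establish the irreducible case first, since that is where strict positivity, uniqueness, and simplicity are genuinely at stake, and then to recover the general pointwise-positive statement by a perturbation argument. Throughout I work on the standard simplex $\Delta=\{v\in\mathbb{R}^n: v_j\geq 0,\ \textstyle\sum_j v_j=1\}$, which is compact and convex, and I exploit the basic consequence of irreducibility that $(I+A)^{n-1}w>0$ (strictly, componentwise) whenever $w\geq 0$ and $w\neq 0$.

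For existence when $A$ is irreducible, I would use the Collatz-Wielandt functional
\begin{align}
r(v)=\min_{j:\,v_j>0}\frac{(Av)_j}{v_j},\qquad v\in\Delta,
\end{align}
and set $r=\sup_{v\in\Delta}r(v)$. The function $r(\cdot)$ fails to be continuous on the boundary of $\Delta$, so the first technical step is to push everything into the strictly positive cone: replacing $v$ by $\tilde v=(I+A)^{n-1}v$ does not decrease $r(v)$ (since $r(v)v\leq Av$ is preserved by the order-preserving operator $(I+A)^{n-1}$), and on the compact set of such strictly positive vectors $r(\cdot)$ is a minimum of finitely many continuous functions, hence continuous and attained at some $v^*>0$. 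The key maneuver is then the standard contradiction: if $Av^*-rv^*\geq 0$ were nonzero, applying $(I+A)^{n-1}$ would make it strictly positive, yielding a vector whose Collatz-Wielandt value exceeds $r$ and contradicting maximality. Hence $Av^*=rv^*$, and since $(I+A)^{n-1}v^*=(1+r)^{n-1}v^*>0$ we conclude $v^*>0$ and $r>0$.

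To identify $r$ with the spectral radius, I would take any eigenpair $Au=\lambda u$ and use $A\geq 0$ together with the triangle inequality to get $|\lambda|\,|u|\leq A|u|$ componentwise, whence $|\lambda|\leq r(|u|)\leq r$; thus $r=r(A)$. Uniqueness of the nonnegative eigenvector follows by comparing $v^*$ with any other nonnegative eigenvector $w$: choosing $t=\min_j w_j/v^*_j$ makes $w-tv^*\geq 0$ an eigenvector with a vanishing entry, which by the strict-positivity dichotomy forces $w-tv^*=0$, so $w$ is a scalar multiple of $v^*$. For the simplicity of $r(A)$ I would apply the existence result to $A^T$ (also irreducible) to produce a strictly positive left eigenvector $y>0$ with $y^TA=r\,y^T$; geometric simplicity is the uniqueness just shown, and algebraic simplicity follows because a generalized eigenvector $u$ with $(A-r)u=v^*$ would give $0=y^T(A-r)u=y^Tv^*>0$, a contradiction.

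Finally, for a general pointwise-positive $A$ I would perturb to $A_\varepsilon=A+\varepsilon J$ with $J$ the all-ones matrix, which is strictly positive and therefore irreducible; the previous steps give eigenvectors $v_\varepsilon\in\Delta$ with $A_\varepsilon v_\varepsilon=r(A_\varepsilon)v_\varepsilon$. By compactness of $\Delta$ I extract a limit $v_\varepsilon\to v\in\Delta$ (still nonzero, being in the simplex), and continuity of the spectral radius gives $Av=r(A)v$ with $v\geq 0$. I expect the main obstacle to be the two places where the boundary of the cone interferes: securing attainment of the Collatz-Wielandt supremum despite the discontinuity of $r(\cdot)$ on $\partial\Delta$, and separating algebraic from geometric simplicity—both of which are resolved by the strictly positive left and right eigenvectors produced via $(I+A)^{n-1}$.
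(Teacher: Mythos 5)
Your proof is correct, but it follows a genuinely different route from the one the paper takes. You give the classical finite-dimensional Collatz--Wielandt argument: maximize $r(v)=\min_{j:v_j>0}(Av)_j/v_j$ over the simplex, use $(I+A)^{n-1}$ to push the optimization into the strictly positive cone where $r(\cdot)$ is continuous, derive $Av^*=rv^*$ by the usual maximality contradiction, and then get uniqueness, simplicity (via the left eigenvector of $A^T$), and the general nonnegative case by perturbing with $A+\varepsilon J$. The paper, by contrast, never proves Theorem \ref{thm:CPF} directly: it cites the original sources and instead \emph{derives} the classical statement as a special case of the quantum PF theorem (Theorem \ref{thm:PFT for quantum channel}), by encoding $A$ as the $4$-tensor $\widehat{\Phi_A}$ of Equation \eqref{eq:positive matrix fourier} acting on the diagonal subalgebra of $M_n(\mathbb{C})$; the underlying existence proof in the paper (Proposition \ref{prop:PF for planar algebra}) is complex-analytic, showing $r\in\sigma(x)$ via positivity of the resolvent's Fourier transform plus the uniform boundedness principle, and extracting the eigenvector as the leading Laurent coefficient of the resolvent at the pole $r$. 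Your approach is more elementary and entirely self-contained for matrices, but it leans on coordinates and pointwise order and does not transfer to the noncommutative planar-algebra setting; the paper's approach is less elementary but uniform across the classical, quantum-channel, and subfactor cases, which is the whole point of the paper. One small point to tighten in your write-up: the uniqueness you establish is for \emph{nonnegative} eigenvectors, whereas geometric simplicity requires that every eigenvector for $r$ be a multiple of $v^*$; this follows in one line by noting that for a real eigenvector $u$ the vector $v^*+\varepsilon u$ is nonnegative for small $\varepsilon$ (since $v^*>0$) and then invoking your uniqueness, with the complex case handled by real and imaginary parts.
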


The equation $Av=r(A)v$ can be 
depicted as
\begin{align}\label{eq:matrix PF vector}
    \raisebox{-.2cm}{
\begin{tikzpicture}[rotate=90]
\draw (0,-2)--(0,0);
\draw [blue, fill=white] (-0.3,-0.3) rectangle (0.3,0.3);
\node at (0,0) {$v$};
\begin{scope}[shift={(0, -1)}]
\draw [blue, fill=white] (-0.3,-0.3) rectangle (0.3,0.3);
\node at (0,0) {$A$};
\end{scope}
\end{tikzpicture}}=r(A)\raisebox{-0.2cm}{
\begin{tikzpicture}[rotate=90]
\draw (0,-1)--(0,0);
\draw [blue, fill=white] (-0.3,-0.3) rectangle (0.3,0.3);
\node at (0,0) {$v$};
\end{tikzpicture}}\;,
\end{align}
in tensor network with $2$-tensors. 
In \S\ref{Sect:PFCompletelyPositiveMaps} we interpret the quantum PF theorem in tensor network with $4$-tensors and the classical PF theorem can be included as an example.
Let $w$ be the normalized left PF eigenvector of $A$: 
\begin{align}
    \raisebox{-0.2cm}{
\begin{tikzpicture}[rotate=90]
\draw (0,-1)--(0,1);
\draw [blue, fill=white] (-0.3,-0.3) rectangle (0.3,0.3);
\node at (0,0) {$A$};
\begin{scope}[shift={(0, -1)}]
\draw [blue, fill=white] (-0.3,-0.3) rectangle (0.3,0.3);
\node at (0,0) {$w$};
\end{scope}
\end{tikzpicture}}=r(A)\raisebox{-0.2cm}{
\begin{tikzpicture}[rotate=90]
\draw (0,0)--(0,1);
\draw [blue, fill=white] (-0.3,-0.3) rectangle (0.3,0.3);
\node at (0,0) {$w$};
\end{tikzpicture}}\;.
\end{align}
Let $v$ be the normalized right PF eigenvector in Theorem \ref{thm:CPF}, define 
\begin{align}
    P_{ij}=\raisebox{-0.2cm}{
\begin{tikzpicture}[rotate=90]
\draw (0,-1)--(0,1);
\draw [blue, fill=white] (-0.3,-0.3) rectangle (0.3,0.3);
\node at (0,0) {$P$};
\node at (0.2,0.65) {$j$};
\node at (0.2,-0.65) {$i$};
\end{tikzpicture}}=\raisebox{-0.2cm}{
\begin{tikzpicture}[rotate=90]
\draw (0,0)--(0,1);
\draw [blue, fill=white] (-0.3,-0.3) rectangle (0.3,0.3);
\node at (0,0) {$w$};
\node at (0.2,0.65) {$j$};
\begin{scope}[shift={(0, -1)}]
    \draw (0,-1)--(0,0);
\draw [blue, fill=white] (-0.3,-0.3) rectangle (0.3,0.3);
\node at (0,0) {$v$};
\node at (0.2,-0.65) {$i$};
\end{scope}
\end{tikzpicture}}=v_iw_j.
\end{align}
Then $P$ is the PF projection, and we have
\begin{align}
     \raisebox{-0.2cm}{
\begin{tikzpicture}[rotate=90]
\draw (0,-2)--(0,1);
\draw [blue, fill=white] (-0.3,-0.3) rectangle (0.3,0.3);
\node at (0,0) {$P$};
\begin{scope}[shift={(0, -1)}]
\draw [blue, fill=white] (-0.3,-0.3) rectangle (0.3,0.3);
\node at (0,0) {$A$};
\end{scope}
\end{tikzpicture}}= \raisebox{-0.2cm}{
\begin{tikzpicture}[rotate=90]
\draw (0,-2)--(0,1);
\draw [blue, fill=white] (-0.3,-0.3) rectangle (0.3,0.3);
\node at (0,0) {$A$};
\begin{scope}[shift={(0, -1)}]
\draw [blue, fill=white] (-0.3,-0.3) rectangle (0.3,0.3);
\node at (0,0) {$P$};
\end{scope}
\end{tikzpicture}}=r(A) \raisebox{-0.2cm}{
\begin{tikzpicture}[rotate=90]
\draw (0,-1)--(0,1);
\draw [blue, fill=white] (-0.3,-0.3) rectangle (0.3,0.3);
\node at (0,0) {$P$};
\end{tikzpicture}}\;.
\end{align}

\subsection{The Quantum PF Theorem: Completely Positive Maps\label{Sect:PFCompletelyPositiveMaps}}
Let $\text{Hom}(M_n(\mathbb{C}),M_n(\mathbb{C}))$ be the set of all linear maps on $M_n(\mathbb{C})$.
 Let $\Phi\in\text{Hom}(M_n(\mathbb{C}),M_n(\mathbb{C}))$.
Graphically, we represent $\Phi$ as the following $4$-tensor:
\begin{align}
    \raisebox{-0.65cm}{\begin{tikzpicture}
\draw (-1,1.2)--(2.5,1.2);
\draw (-1,0.3)--(2.5,0.3);
\draw [blue,fill=white] (0,0) rectangle (1.5,1.5);
\node at (0.75,0.75) {$\Phi$};
\end{tikzpicture}},
\end{align}
the action of $\Phi$ on $D\in M_n(\mathbb{C})$ as
\begin{align}\label{eq:4 tensor action}
   \Phi(D)=\raisebox{-0.65cm}{ \begin{tikzpicture}
   \draw (-1,1.2)--(-1,0.3);
   \begin{scope}[shift={(-1,0.75)}]
     \draw [blue, fill=white] (-0.3,-0.3) rectangle (0.3,0.3);
\node at (0,0) {$D$};
   \end{scope}
\draw (-1,1.2)--(2.5,1.2);
\draw (-1,0.3)--(2.5,0.3);
\draw [blue,fill=white] (0,0) rectangle (1.5,1.5);
\node at (0.75,0.75) {$\Phi$};
\end{tikzpicture}}.
\end{align}
Up to a system of matrix basis $\{E_{ij}\}_{i,j=1}^n$, we have
\begin{align}
 \raisebox{-0.65cm}{\begin{tikzpicture}
\draw (-1,1.2)--(2.5,1.2);
\draw (-1,0.3)--(2.5,0.3);
\draw [blue,fill=white] (0,0) rectangle (1.5,1.5);
\node at (0.75,0.75) {$\Phi$};
\end{tikzpicture}}=\sum_{i,j=1}^n\raisebox{-1.6cm}{
\begin{tikzpicture}[scale=2]
\draw (0,-0.8)--(0,0.8);
\draw(0,0.8)--(-0.5,0.8);
\draw(0,-0.8)--(-0.5,-0.8);
\draw [blue, fill=white] (-0.3,-0.3) rectangle (0.3,0.3);
\node at (0,0) {$E_{ij}$};
\end{tikzpicture}}\raisebox{-1.6cm}{
\begin{tikzpicture}[scale=2]
\draw (0,-0.8)--(0,0.8);
\draw(0,0.8)--(0.5,0.8);
\draw(0,-0.8)--(0.5,-0.8);
\draw [blue, fill=white] (-0.3,-0.3) rectangle (0.3,0.3);
\node at (0,0) {$\Phi (E_{ij})$};
\end{tikzpicture}}.    
    \end{align}
 The Fourier transform $\mathcal{F}$: $\text{Hom}(M_n(\mathbb{C}),M_n(\mathbb{C}))\rightarrow M_n(\mathbb{C})\otimes M_n(\mathbb{C})$ is naturally represented as
 \begin{align}
     \raisebox{-0.6cm}{
\begin{tikzpicture}
\draw (-1,1.2)--(2.5,1.2);
\draw (-1,0.3)--(2.5,0.3);
\draw [blue,fill=white] (0,0) rectangle (1.5,1.5);
\node at (0.75,0.75) {$\Phi$};
\end{tikzpicture}}\quad\stackrel{\mathcal{F}}{\longmapsto}\quad
 \raisebox{-1.1cm}{
\begin{tikzpicture}
\draw (-0.5,1.2)--(2,1.2);
\draw (-0.5,0.3)--(2,0.3);
\draw(2,1.2)--(2,2);
\draw(2,0.3)--(2,-0.5);
\draw (-0.5,1.2)--(-0.5,2);
\draw (-0.5,0.3)--(-0.5,-0.5);
\draw [blue,fill=white] (0,0) rectangle (1.5,1.5);
\node at (0.75,0.75) {$\Phi$};
\end{tikzpicture}}.
 \end{align}
We denote the Fourier transform as
\begin{align}
\raisebox{-1.4cm}{
\begin{tikzpicture}
\draw (1,-1.5)--(1,-0.8);
\draw (1,1.5)--(1,0.8);
\draw (2,-1.5)--(2,-0.8);
\draw (2,1.5)--(2,0.8);
\draw [blue,fill=white] (0.7,-0.8) rectangle (2.3,0.8);
\node at (1.5,0) {$\widehat{\Phi}$};
\end{tikzpicture}}=
\raisebox{-1.1cm}{
\begin{tikzpicture}
\draw (-0.5,1.2)--(2,1.2);
\draw (-0.5,0.3)--(2,0.3);
\draw(2,1.2)--(2,2);
\draw(2,0.3)--(2,-0.5);
\draw (-0.5,1.2)--(-0.5,2);
\draw (-0.5,0.3)--(-0.5,-0.5);
\draw [blue,fill=white] (0,0) rectangle (1.5,1.5);
\node at (0.75,0.75) {$\Phi$};
\end{tikzpicture}}=\sum_{i,j=1}^n\raisebox{-1.8cm}{
\begin{tikzpicture}[scale=2]
\draw (0,-1)--(0,1);
\draw [blue, fill=white] (-0.3,-0.3) rectangle (0.3,0.3);
\node at (0,0) {$E_{ij}$};
\end{tikzpicture}}\raisebox{-1.8cm}{
\begin{tikzpicture}[scale=2]
\draw (0,-1)--(0,1);
\draw [blue, fill=white] (-0.3,-0.3) rectangle (0.3,0.3);
\node at (0,0) {$\Phi (E_{ij})$};
\end{tikzpicture}}\;.
\end{align}
  We reinterpret $\Phi(D)$ as a convolution $D* \widehat{\Phi}$ in the Fourier dual:
 \begin{align}
     \raisebox{-1.4cm}{
\begin{tikzpicture}
\draw (0,-1)--(0,1) arc (180:0:.5) --++(0,-.3) arc (-180:0:.5) --++(0,.8) ;
\draw (0,-1) arc (-180:0:.5) --++(0,.3) arc (180:0:.5) --++(0,-.8);
\draw [blue, fill=white] (-0.3,-0.3) rectangle (0.3,0.3);
\node at (0,0) {$D$};
\draw [blue,fill=white] (0.7,-0.8) rectangle (2.3,0.8);
\node at (1.5,0) {$\widehat{\Phi}$};
\end{tikzpicture}}\;.
 \end{align}
Using the generalized Choi-Kraus representation, see e.g. \cite[Corollary 3.12]{Pis01}, there exists a finite set of matrices $(V_i)_{i=1}^m$ and $(U_i)_{i=1}^m$, $1\leq m\leq n^2$, such that 
\begin{align}
\Phi(D)=\sum_{i=1}^m V_i D U_i=\sum_{i=1}^{m}
\raisebox{-1.65cm}{
\begin{tikzpicture}
\draw (0,-1.8)--(0,1.8) ;
\draw (0,-1.3) 
(0,-.8);
\draw [blue, fill=white] (-0.3,-0.3) rectangle (0.3,0.3);
\node at (0,0) {$D$};
\draw [blue, fill=white] (-0.3,-1.3) rectangle (.3,-0.7);
\node at (0,-1) {$V_i$};
\draw [blue, fill=white] (-0.3,0.7) rectangle (.3,1.3);
\node at (0,1) {$U_i$};
\end{tikzpicture}}=\sum_{i=1}^m 
\raisebox{-1.4cm}{
\begin{tikzpicture}
\draw (0,-1)--(0,1) arc (180:0:.5) --++(0,-.3) arc (-180:0:.5) --++(0,.8) ;
\draw (0,-1) arc (-180:0:.5) --++(0,.3) arc (180:0:.5) --++(0,-.8);
\draw [blue, fill=white] (-0.3,-0.3) rectangle (0.3,0.3);
\node at (0,0) {$D$};
\draw [blue, fill=white] (1.7,-1.3) rectangle (2.3,-0.7);
\node at (2,-1) {$V_i$};
\draw [blue, fill=white] (1.7,0.7) rectangle (2.3,1.3);
\node at (2,1) {$U_i$};
\end{tikzpicture}}\;.
\end{align}
Note  that
 \begin{align}
     \raisebox{-1.4cm}{
\begin{tikzpicture}
\draw (1,-1.5)--(1,-0.8);
\draw (1,1.5)--(1,0.8);
\draw (2,-1.5)--(2,-0.8);
\draw (2,1.5)--(2,0.8);
\draw [blue,fill=white] (0.7,-0.8) rectangle (2.3,0.8);
\node at (1.5,0) {$\widehat{\Phi}$};
\end{tikzpicture}}\in\text{span}\left\{\raisebox{-1.6cm}{
\begin{tikzpicture}
\draw (1, 1.7)--(1,1)--++(0,-.3) arc (-180:0:.5) --++(0,1) ;
\draw (1,-1.7) --(1, -1) --++(0,.3) arc (180:0:.5) --++(0,-1);
\draw [blue, fill=white] (1.7,-1.3) rectangle (2.3,-0.7);
\node at (2,-1) {$V$};
\draw [blue, fill=white] (1.7,0.7) rectangle (2.3,1.3);
\node at (2,1) {$U$};
\end{tikzpicture}}:\ U,V\in M_n(\mathbb{C}) \right\}=\text{span}\left\{\raisebox{-1.65cm}{
\begin{tikzpicture}
\draw (0,-1.8)--(0,1.8) ;
\draw (0,-1.3) 
(0,-.8);
\draw [blue, fill=white] (-0.3,-0.3) rectangle (0.3,0.3);
\node at (0,0) {$U$};
\begin{scope}[shift={(1, 0)}]
  \draw (0,-1.8)--(0,1.8) ;
\draw (0,-1.3) 
(0,-.8);
\draw [blue, fill=white] (-0.3,-0.3) rectangle (0.3,0.3);
\node at (0,0) {$V$};
\end{scope}
\end{tikzpicture}}:\ U,V\in M_n(\mathbb{C}) \right\}.
 \end{align}
 
A quantum channel $\Phi$ on $M_n(\mathbb{C})$ is a completely positive, trace preserving (CPTP) linear map. (Sometimes one allows trace non-increasing maps.) 
Choi's Theorem 1 of ~\cite{Choi75} assures that  there exist $m$ operators $F_j\in M_n(\mathbb{C})$ (called Kraus operators in quantum information), $1\leq m\leq n^2$, such that
\begin{align}
    \Phi(D)=\sum_{j=1}^m F_j D F_j^*, ~\quad\forall D \in M_n(\mathbb{C})\;.
\end{align}
One can represent $\Phi$ pictorially  as a tensor network, namely  
\begin{align}
\Phi(D)=\sum_{j=1}^{m}
\raisebox{-1.65cm}{
\begin{tikzpicture}
\draw (0,-1.8)--(0,1.8) ;
\draw (0,-1.3) 
(0,-.8);
\draw [blue, fill=white] (-0.3,-0.3) rectangle (0.3,0.3);
\node at (0,0) {$D$};
\draw [blue, fill=white] (-0.3,-1.3) rectangle (.3,-0.7);
\node at (0,-1) {$F_j$};
\draw [blue, fill=white] (-0.3,0.7) rectangle (.3,1.3);
\node at (0,1) {$F_j^*$};
\end{tikzpicture}}=\sum_{j=1}^m
\raisebox{-1.4cm}{
\begin{tikzpicture}
\draw (0,-1)--(0,1) arc (180:0:.5) --++(0,-.3) arc (-180:0:.5) --++(0,.8) ;
\draw (0,-1) arc (-180:0:.5) --++(0,.3) arc (180:0:.5) --++(0,-.8);
\draw [blue, fill=white] (-0.3,-0.3) rectangle (0.3,0.3);
\node at (0,0) {$D$};
\draw [blue, fill=white] (1.7,-1.3) rectangle (2.3,-0.7);
\node at (2,-1) {$F_j$};
\draw [blue, fill=white] (1.7,0.7) rectangle (2.3,1.3);
\node at (2,1) {$F_j^*$};
\end{tikzpicture}}
\;.
\end{align}
The Fourier transform $\widehat{\Phi}$ then  is  the following 4-tensor
\begin{align}\label{eq:cptp Fourier}
\raisebox{-1.4cm}{
\begin{tikzpicture}
\draw (1,-1.5)--(1,-0.8);
\draw (1,1.5)--(1,0.8);
\draw (2,-1.5)--(2,-0.8);
\draw (2,1.5)--(2,0.8);
\draw [blue,fill=white] (0.7,-0.8) rectangle (2.3,0.8);
\node at (1.5,0) {$\widehat{\Phi}$};
\end{tikzpicture}}=\sum_{i=1}^m
\raisebox{-1.6cm}{
\begin{tikzpicture}
\draw (1, 1.7)--(1,1)--++(0,-.3) arc (-180:0:.5) --++(0,1) ;
\draw (1,-1.7) --(1, -1) --++(0,.3) arc (180:0:.5) --++(0,-1);
\draw [blue, fill=white] (1.7,-1.3) rectangle (2.3,-0.7);
\node at (2,-1) {$F_j$};
\draw [blue, fill=white] (1.7,0.7) rectangle (2.3,1.3);
\node at (2,1) {$F_j^*$};
\end{tikzpicture}},
\end{align}
which is a positive operator in $M_n(\mathbb{C})\otimes M_n(\mathbb{C})$.
 This gives a graphical interpretation  of Choi's Theorem 2 of ~\cite{Choi75}: A linear map $\Phi\in\text{Hom}(M_n(\mathbb{C}),M_n(\mathbb{C}))$ is completely positive if and only if its Fourier transform $\widehat{\Phi}$ is positive-semidefinite. 
\begin{remark}
Let $\Phi$ be a completely positive map on $M_n(\mathbb{C})$.
By the spectral decomposition of  $\widehat{\Phi}$, we could choose Kraus operators $\{F_j\}_{i=1}^m$ in $M_n(\mathbb{C})$ such that ${\rm Tr}(F_i^*F_j)=0$, $\forall i\neq j$, 
where $m$ is the rank of $\widehat{\Phi}$. 
\end{remark}

A quantum channel $\Phi$ is called irreducible if $\lambda p-\Phi(p)$ is positive-semidefinite
for some projection $p\in M_n(\mathbb{C})$ and $\lambda>0$ implies $p=0$ or $I$. This definition has several equivalent descriptions that are discussed in Theorem \ref{thm:equivirr}.
\begin{theorem}[\bf The PF Theorem for Quantum Channel~\cite{EH78}]\label{thm:PFT for quantum channel}
Let $\Phi$ be a quantum channel on $M_n(\mathbb{C})$. Then 
 there exists a nonzero positive-semidefinite matrix $D$ such that
\begin{align}\label{eq:cptp PF vector}
    \raisebox{-0.9cm}{
\begin{tikzpicture}[scale=1.5]
\begin{scope}[shift={(-0.6, 0)}]
\draw [blue] (0,0) rectangle (0.35, 0.5);
\node at (0.175, 0.25) {$D$};
\end{scope}
\draw [blue] (0,0) rectangle (0.7, 0.5);
\node at (0.35, 0.25) {$\widehat{\Phi}$};
\draw  (0.5, 0)--(0.5, -0.4) (0.5, 0.5)--(0.5, 0.9);
\draw (0.2, 0.5) .. controls +(0, 0.4) and +(0, 0.4) .. (-0.425, 0.5);
\draw (0.2, 0) .. controls +(0, -0.4) and +(0, -0.4) .. (-0.425, 0);
\end{tikzpicture}}
=r
\raisebox{-0.9cm}{
\begin{tikzpicture}[scale=1.5]
\begin{scope}[shift={(1.5, 0)}]
\draw [blue] (0,0) rectangle (0.35, 0.5);
\node at (0.175, 0.25) {$D$};
\draw  (0.175, 0)--(0.175, -0.4) (0.175, 0.5)--(0.175, 0.9);
\end{scope}
\end{tikzpicture}},
\end{align}
where $r$ is the spectral radius of $\Phi$.
 Moreover, if $\Phi$ is irreducible, then $D$ is positive-definite and unique up to a scalar.
\end{theorem}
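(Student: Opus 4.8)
The plan is to exploit that a quantum channel $\Phi$ preserves the cone $M_n(\mathbb{C})^+$ of positive-semidefinite matrices and to run a Perron--Frobenius argument on this finite-dimensional, closed, pointed, solid cone. First I would record that for a channel the relevant eigenvalue is the spectral radius $r=1$: the trace-dual $\Phi^*$ is a unital positive map, so $\Phi^*(I)=I$ exhibits $1$ as an eigenvalue, while unitality together with positivity forces $\|\Phi^*\|=\|\Phi^*(I)\|=1$ and hence bounds the spectral radius of $\Phi^*$ by $1$; since $\Phi$ and $\Phi^*$ are cospectral, $r(\Phi)=1$. For existence of a positive eigenvector I would apply Brouwer's fixed point theorem to the continuous self-map $D\mapsto\Phi(D)$ of the compact convex set of density matrices (trace preservation keeps the image in this set), producing $D\geq 0$, $D\neq 0$, with $\Phi(D)=D=r(\Phi)D$. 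For a general positive map one instead extracts the eigenvector from the resolvent $\sum_k s^{-k-1}\Phi^k$ as $s\downarrow r$, which also identifies the eigenvalue with $r$.

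Next, assuming irreducibility, I would show any such $D$ is positive-definite. Let $p$ be the support projection of $D$ and $q=I-p$. Since $D\geq\varepsilon p$ for some $\varepsilon>0$, positivity of $\Phi$ gives $rD=\Phi(D)\geq\varepsilon\,\Phi(p)$; compressing by $q$ and using $qD=Dq=0$ yields $q\Phi(p)q=0$. As $\Phi(p)\geq 0$, the identity $q\Phi(p)q=0$ forces $\Phi(p)q=0$, i.e. $\Phi(p)=p\Phi(p)p\leq\lambda p$ with $\lambda=\|\Phi(p)\|$, and $\lambda>0$ because $\Phi(D)=rD\neq 0$ precludes $\Phi(p)=0$. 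Thus $\lambda p-\Phi(p)\geq 0$ with $\lambda>0$, so the irreducibility hypothesis forces $p=0$ or $p=I$; since $D\neq 0$ we get $p=I$, that is, $D>0$.

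For uniqueness I would compare two positive-definite eigenvectors $D_1,D_2$ and set $t^*=\max\{t\geq 0:D_1-tD_2\geq 0\}$, which is finite and strictly positive because both matrices are positive-definite. The difference $E=D_1-t^* D_2\geq 0$ again satisfies $\Phi(E)=rE$ by linearity, while maximality of $t^*$ makes $E$ singular (if $E>0$ then $E-sD_2\geq 0$ for small $s>0$, contradicting maximality). If $E\neq 0$, the previous paragraph would force $E>0$, a contradiction; hence $E=0$ and $D_1=t^* D_2$, proving uniqueness up to a scalar.

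The main obstacle is the positive-definiteness step: packaging the support-projection compression into the exact inequality $\lambda p-\Phi(p)\geq 0$ demanded by the irreducibility definition, where I must verify $q\Phi(p)q=0\Rightarrow\Phi(p)q=0$ for $\Phi(p)\geq 0$ and confirm $\lambda>0$. Everything else, namely the Brouwer step and the $t^*$-comparison, is routine once the cone structure is in hand. If one additionally wants $r$ to be an algebraically simple eigenvalue rather than merely unique up to a scalar among positive eigenvectors, that would require a further argument ruling out nontrivial Jordan blocks, for instance via a strict-positivity improvement $\Phi^N(\,\cdot\,)>0$ supplied by irreducibility.
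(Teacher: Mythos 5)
Your proof is correct, but it takes a different route from the paper on the existence half. The paper does not prove Theorem \ref{thm:PFT for quantum channel} directly: it cites \cite{EH78} and recovers the statement as a special case of Proposition \ref{prop:PF for planar algebra} and Theorem \ref{thm: unique PF}, whose existence argument runs through the resolvent $R(\lambda,x)=\sum_j x^j/\lambda^{j+1}$, the quantum Schur product theorem (positivity of $\widehat{x}^{(*j)}$), positive linear functionals plus the uniform boundedness principle to force $r\in\sigma(x)$, and then the Laurent coefficient $x_{(-\ell)}$ at the pole $r$ as the $\mathfrak{F}$-positive eigenvector. You instead use trace preservation to reduce to $r=1$ (via unitality of $\Phi^*$ and the Russo--Dye norm identity) and apply Brouwer's fixed point theorem on the compact convex set of density matrices. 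Your argument is shorter and more elementary, but it genuinely needs the CPTP hypothesis; the paper's resolvent machinery is what lets it drop trace preservation and work for arbitrary $\mathfrak{F}$-positive elements in a $C^*$-planar algebra, which is the level of generality the rest of the paper requires. Your support-projection step (showing $q\Phi(p)q=0$ forces $\Phi(p)q=0$ and hence $\lambda p-\Phi(p)\geq 0$ with $\lambda=\|\Phi(p)\|>0$) is sound and matches exactly the irreducibility definition used in the paper, and your $t^*$-comparison for uniqueness is essentially the same continuity argument as in the proof of Theorem \ref{thm: unique PF}. You are also right to flag that algebraic simplicity of $r$ would need a separate argument; the theorem as stated only claims uniqueness of the positive eigenvector up to scalar, which you have.
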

\begin{remark} 
Let $\Phi$ be a quantum channel on $M_n(\mathbb{C})$.
Considering the right action of $\widehat{\Phi}$, we have that there exists a nonzero positive-semidefinite matrix $C$ such that
\begin{align}
    \raisebox{-0.9cm}{
\begin{tikzpicture}[scale=1.5]
\begin{scope}[shift={(0.95, 0)}]
\draw [blue] (0,0) rectangle (0.35, 0.5);
\node at (0.175, 0.25) {$C$};
\end{scope}
\draw [blue] (0,0) rectangle (0.7, 0.5);
\node at (0.35, 0.25) {$\widehat{\Phi}$};
\draw  (0.2, 0)--(0.2, -0.4) (0.2, 0.5)--(0.2, 0.9);
\draw (0.5, 0.5) .. controls +(0, 0.4) and +(0, 0.4) .. (1.125, 0.5);
\draw (0.5, 0) .. controls +(0, -0.4) and +(0, -0.4) .. (1.125, 0);
\end{tikzpicture}}=r
\raisebox{-0.9cm}{
\begin{tikzpicture}[scale=1.5]
\begin{scope}[shift={(1.5, 0)}]
\draw [blue] (0,0) rectangle (0.35, 0.5);
\node at (0.175, 0.25) {$C$};
\draw  (0.175, 0)--(0.175, -0.4) (0.175, 0.5)--(0.175, 0.9);
\end{scope}
\end{tikzpicture}}\;,
\end{align}
where $r$ is the spectral radius of $\Phi$.
We define the 4-tensor $\Phi_{C,D}$ as
\begin{align}
     \raisebox{-0.65cm}{\begin{tikzpicture}
\draw (-1,1.2)--(2.5,1.2);
\draw (-1,0.3)--(2.5,0.3);
\draw [blue,fill=white] (0,0) rectangle (1.5,1.5);
\node at (0.75,0.75) {$\Phi_{C,D}$};
\end{tikzpicture}}= \raisebox{-0.9cm}{
\begin{tikzpicture}[scale=1.5]
\begin{scope}[shift={(1.5, 0)}]
\draw [blue] (0,0) rectangle (0.35, 0.5);
\node at (0.175, 0.25) {$C$};
\draw  (0.175, 0)--(0.175, -0.4) (0.175, 0.5)--(0.175, 0.9);
\draw(0.175,-0.4)--(-0.3,-0.4) (0.175, 0.9)--(-0.3, 0.9);
\end{scope}
\end{tikzpicture}}\raisebox{-0.9cm}{
\begin{tikzpicture}[scale=1.5]
\begin{scope}[shift={(1.5, 0)}]
\draw [blue] (0,0) rectangle (0.35, 0.5);
\node at (0.175, 0.25) {$D$};
\draw  (0.175, 0)--(0.175, -0.4) (0.175, 0.5)--(0.175, 0.9);
\draw (0.175, -0.4)--(0.475, -0.4) (0.175, 0.9)--(0.475, 0.9);
\end{scope}
\end{tikzpicture}}\;.
\end{align}
The Fourier transform $\widehat{\Phi_{C,D}}$ is
\begin{align}
    \raisebox{-1.4cm}{
\begin{tikzpicture}
\draw (1,-1.5)--(1,-0.8);
\draw (1,1.5)--(1,0.8);
\draw (2,-1.5)--(2,-0.8);
\draw (2,1.5)--(2,0.8);
\draw [blue,fill=white] (0.7,-0.8) rectangle (2.3,0.8);
\node at (1.5,0) {$\widehat{\Phi_{C,D}}$};
\end{tikzpicture}}= \raisebox{-0.9cm}{
\begin{tikzpicture}[scale=1.5]
\begin{scope}[shift={(1.5, 0)}]
\draw [blue] (0,0) rectangle (0.35, 0.5);
\node at (0.175, 0.25) {$C$};
\draw  (0.175, 0)--(0.175, -0.4) (0.175, 0.5)--(0.175, 0.9);
\end{scope}
\end{tikzpicture}}\raisebox{-0.9cm}{
\begin{tikzpicture}[scale=1.5]
\begin{scope}[shift={(1.5, 0)}]
\draw [blue] (0,0) rectangle (0.35, 0.5);
\node at (0.175, 0.25) {$D$};
\draw  (0.175, 0)--(0.175, -0.4) (0.175, 0.5)--(0.175, 0.9);
\end{scope}
\end{tikzpicture}}\;,
\end{align}
which is positive-semidefinite.
 Moreover, we have
\begin{align}
   \raisebox{-1.2cm}{
\begin{tikzpicture}[scale=2]
\path [fill=white] (0.35, 0.5) .. controls +(0, 0.3) and +(0, 0.3) .. (0.85, 0.5)-- (0.85, 0) .. controls +(0, -0.3) and +(0, -0.3) .. (0.35, 0)--(0.35, 0.5);
\draw [blue, fill=white] (0,0) rectangle (0.5, 0.5);
\node at (0.25, 0.25) {$\widehat{\Phi_{C,D}}$};
\draw (0.15, 0.5)--(0.15, 0.9) (0.15, 0)--(0.15, -0.4);
\begin{scope}[shift={(0.7, 0)}]
\draw [blue, fill=white] (0,0) rectangle (0.5, 0.5);
\node at (0.25, 0.25) {$\widehat{\Phi}$};
\draw (0.35, 0.5)--(0.35, 0.9)  (0.35, 0)--(0.35, -0.4);
\end{scope}
\draw (0.35, 0.5) .. controls +(0, 0.3) and +(0, 0.3) .. (0.85, 0.5);
\draw (0.35, 0) .. controls +(0, -0.3) and +(0, -0.3) .. (0.85, 0);
\end{tikzpicture}}=\raisebox{-1.2cm}{
\begin{tikzpicture}[scale=2]
\path [fill=white] (0.35, 0.5) .. controls +(0, 0.3) and +(0, 0.3) .. (0.85, 0.5)-- (0.85, 0) .. controls +(0, -0.3) and +(0, -0.3) .. (0.35, 0)--(0.35, 0.5);
\draw [blue, fill=white] (0,0) rectangle (0.5, 0.5);
\node at (0.25, 0.25) {$\widehat{\Phi}$};
\draw (0.15, 0.5)--(0.15, 0.9) (0.15, 0)--(0.15, -0.4);
\begin{scope}[shift={(0.7, 0)}]
\draw [blue, fill=white] (0,0) rectangle (0.5, 0.5);
\node at (0.25, 0.25) {$\widehat{\Phi_{C,D}}$};
\draw (0.35, 0.5)--(0.35, 0.9)  (0.35, 0)--(0.35, -0.4);
\end{scope}
\draw (0.35, 0.5) .. controls +(0, 0.3) and +(0, 0.3) .. (0.85, 0.5);
\draw (0.35, 0) .. controls +(0, -0.3) and +(0, -0.3) .. (0.85, 0);
\end{tikzpicture}}=r
\raisebox{-1.2cm}{
\begin{tikzpicture}[scale=2]
\draw [blue, fill=white] (0,0) rectangle (0.5, 0.5);
\node at (0.25, 0.25) {$\widehat{\Phi_{C,D}}$};
\draw (0.15, 0.5)--(0.15, 0.9) (0.15, 0)--(0.15, -0.4);
\draw (0.35, 0.5)--(0.35, 0.9)  (0.35, 0)--(0.35, -0.4);
\end{tikzpicture}}.
\end{align}
\end{remark}
We now show  Theorem \ref{thm:PFT for quantum channel} unifies the classic PF theorem.
An $n$-dimensional vector $v\in\mathbb{C}^n$ could been regarded as a diagonal matrix $D$ in $M_n(\mathbb{C})$: its graphical representation has the same label $i$,
\begin{align}
    v_i=D_{ii}=\raisebox{-0.9cm}{
\begin{tikzpicture}
\draw (0,-1)--(0,1);
\draw [blue, fill=white] (-0.3,-0.3) rectangle (0.3,0.3);
\node at (0,0) {$D$};
\node at (0.2,0.65) {$i$};
\node at (0.2,-0.65) {$i$};
\end{tikzpicture}}.
\end{align}
A matrix $A$ acting on the vector space then could be represented as a 4-tensor $\widehat{\Phi_A}$ acting on the diagonal matrix subalgebra of $M_n(\mathbb{C})$. Specifically, 
\begin{align}
    Av=\sum_{i,j=1}^{n}A_{ij}
\raisebox{-1.65cm}{
\begin{tikzpicture}
\draw (0,-1.8)--(0,1.8) ;
\draw (0,-1.3) 
(0,-.8);
\draw [blue, fill=white] (-0.3,-0.3) rectangle (0.3,0.3);
\node at (0,0) {$D$};
\draw [blue, fill=white] (-0.3,-1.3) rectangle (.3,-0.7);
\node at (0,-1) {$E_{ij}$};
\draw [blue, fill=white] (-0.3,0.7) rectangle (.3,1.3);
\node at (0,1) {$E_{ji}$};
\end{tikzpicture}}=\sum_{i,j=1}^n A_{ij}
\raisebox{-1.4cm}{
\begin{tikzpicture}
\draw (0,-1)--(0,1) arc (180:0:.5) --++(0,-.3) arc (-180:0:.5) --++(0,.8) ;
\draw (0,-1) arc (-180:0:.5) --++(0,.3) arc (180:0:.5) --++(0,-.8);
\draw [blue, fill=white] (-0.3,-0.3) rectangle (0.3,0.3);
\node at (0,0) {$D$};
\draw [blue, fill=white] (1.7,-1.3) rectangle (2.3,-0.7);
\node at (2,-1) {$E_{ij}$};
\draw [blue, fill=white] (1.7,0.7) rectangle (2.3,1.3);
\node at (2,1) {$E_{ji}$};
\end{tikzpicture}}.
\end{align}
We define $\widehat{\Phi_A}$ as the 4-tensor
\begin{align}\label{eq:positive matrix fourier}
\raisebox{-1.4cm}{
\begin{tikzpicture}
\draw (1,-1.5)--(1,-0.8);
\draw (1,1.5)--(1,0.8);
\draw (2,-1.5)--(2,-0.8);
\draw (2,1.5)--(2,0.8);
\draw [blue,fill=white] (0.7,-0.8) rectangle (2.3,0.8);
\node at (1.5,0) {$\widehat{\Phi_A}$};
\end{tikzpicture}}=\sum_{i,j=1}^n A_{ij}
\raisebox{-1.6cm}{
\begin{tikzpicture}
\draw (1, 1.7)--(1,1)--++(0,-.3) arc (-180:0:.5) --++(0,1) ;
\draw (1,-1.7) --(1, -1) --++(0,.3) arc (180:0:.5) --++(0,-1);
\draw [blue, fill=white] (1.7,-1.3) rectangle (2.3,-0.7);
\node at (2,-1) {$E_{ij}$};
\draw [blue, fill=white] (1.7,0.7) rectangle (2.3,1.3);
\node at (2,1) {$E_{ji}$};
\end{tikzpicture}}.
\end{align}
Note that $\widehat{\Phi_A}$ is a linear sum of $n^2$ orthogonal rank-one projections. So $\widehat{\Phi_A}$ is positive-semidefinite if and only if $A$ is a positive matrix, i.e., $A_{ij}\geq0$. Moreover, if $A$ is irreducible then $\Phi_A$ is also irreducible.
Now  take $\Phi=\Phi_A$ in Theorem \ref{thm:PFT for quantum channel}, then we recover 
the classic PF theorem.
\subsection{$C^*$-Planar Algebras}\label{subsec:planar algebra}
In this section, we introduce planar algebras \cite{Jon12,Jon21}, which contain the above graphical computations in  tensor networks.
Moreover, planar algebras is a more general picture language to study quantum symmtries such as quantum groups and  subfactors.
We focus on the $C^*$-planar algebra $\mathscr{P}=\{\mathscr{P}_{n,\pm}\}_{n\geq0}$.
This algebra is graded, with each $\mathscr{P}_{n,\pm}$ being a finite-dimensional $C^*$-algebra consisting of shaded $n$-boxes with a trace $tr_{n,\pm}$. 
 Let us introduce some  planar tangles such as multiplication, convolution, Fourier transform, etc. We are interested in two box space.
An element $x\in\mathscr{P}_{2,+}$ is depicted as the following picture:
\begin{align}\label{eq:2box}
    \raisebox{-0.9cm}{
\begin{tikzpicture}
\path [fill=gray!40] (-0.1, -1) rectangle (0.1, 1);
\draw (-0.1,-1)--(-0.1,1);
\draw (0.1,-1)--(0.1,1);
\draw [blue, fill=white] (-0.3,-0.3) rectangle (0.3,0.3);
\node at (0,0) {$x$};
\node at (-0.5,0){\$};
\end{tikzpicture}}\;,
\end{align}
which is a two box with shading and the dollar sign \$ on the left side of the rectangle (we may omit them when considering an element in $\mathscr{P}_{2,\pm}$ if there is no confusion).
Multiplication of two elements $x,y\in \mathscr{P}_{2,+}$ is given by their vertical composition:
\begin{align}\label{eq:multiplication}
   xy:= \raisebox{-1.4cm}{
\begin{tikzpicture}
\path [fill=gray!40] (-0.1, -2) rectangle (0.1, 1);
\draw (-0.1,-2)--(-0.1,1);
\draw (0.1,-2)--(0.1,1);
\draw [blue, fill=white] (-0.3,-0.3) rectangle (0.3,0.3);
\node at (0,0) {$y$};
\begin{scope}[shift={(0, -1)}]
\draw [blue, fill=white] (-0.3,-0.3) rectangle (0.3,0.3);
\node at (0,0) {$x$};
\end{scope}
\end{tikzpicture}}
\;.
\end{align}
One obtains the unnormalized trace of $x$, by connecting the inputs with the outputs:
\begin{align}\label{eq:trace}
      tr_{2,+}(x):=tr_{0,+}\left(\raisebox{-0.7cm}{
\begin{tikzpicture}
\path [fill=gray!40](-0.1, 0.3) .. controls +(0, 0.5) and +(0, 0.5) .. (0.9, 0.3)--(0.9,-0.3)
..controls +(0,- 0.5) and +(0, -0.5)..(-0.1, -0.3);
\path [fill=white](0.1, 0.3) .. controls +(0, 0.3) and +(0, 0.3) .. (0.7, 0.3)--(0.7,-0.3)
..controls +(0,- 0.3) and +(0, -0.3)..(0.1, -0.3);
\draw (0.1, 0.3) .. controls +(0, 0.3) and +(0, 0.3) .. (0.7, 0.3);
\draw (-0.1, 0.3) .. controls +(0, 0.5) and +(0, 0.5) .. (0.9, 0.3);
\draw (0.1, -0.3) .. controls +(0, -0.3) and +(0, -0.3) .. (0.7, -0.3);
\draw (-0.1, -0.3) .. controls +(0,- 0.5) and +(0, -0.5) .. (0.9, -0.3);
\draw(0.7,0.3)--(0.7,-0.3);
\draw(0.9,0.3)--(0.9,-0.3);
\draw [blue, fill=white] (-0.3,-0.3) rectangle (0.3,0.3);
\node at (0,0) {$x$};
\end{tikzpicture}}\right).
\end{align}
If a $C^*$-planar algebra is a subfactor planar algebra, then $\dim_{\mathbb{C}}(\mathscr{P}_{0,\pm})=1$, and the trace is just \raisebox{-0.7cm}{
\begin{tikzpicture}
\path [fill=gray!40](-0.1, 0.3) .. controls +(0, 0.5) and +(0, 0.5) .. (0.9, 0.3)--(0.9,-0.3)
..controls +(0,- 0.5) and +(0, -0.5)..(-0.1, -0.3);
\path [fill=white](0.1, 0.3) .. controls +(0, 0.3) and +(0, 0.3) .. (0.7, 0.3)--(0.7,-0.3)
..controls +(0,- 0.3) and +(0, -0.3)..(0.1, -0.3);
\draw (0.1, 0.3) .. controls +(0, 0.3) and +(0, 0.3) .. (0.7, 0.3);
\draw (-0.1, 0.3) .. controls +(0, 0.5) and +(0, 0.5) .. (0.9, 0.3);
\draw (0.1, -0.3) .. controls +(0, -0.3) and +(0, -0.3) .. (0.7, -0.3);
\draw (-0.1, -0.3) .. controls +(0,- 0.5) and +(0, -0.5) .. (0.9, -0.3);
\draw(0.7,0.3)--(0.7,-0.3);
\draw(0.9,0.3)--(0.9,-0.3);
\draw [blue, fill=white] (-0.3,-0.3) rectangle (0.3,0.3);
\node at (0,0) {$x$};
\end{tikzpicture}}\;.
It has the following property:
\begin{align}
     tr_{2,+}(xy)=\raisebox{-1.1cm}{
\begin{tikzpicture}
\path [fill=gray!40] (-0.1, 1.1) .. controls +(0, 0.5) and +(0, 0.5) .. (0.9,1.1)--(0.9,-0.3)..
 controls +(0, -0.5) and +(0, -0.5) .. (-0.1, -0.3);
 \path [fill=white](0.1, 1.1) .. controls +(0, 0.3) and +(0, 0.3) .. (0.7,1.1)--(0.7,-0.3).. controls +(0, -0.3) and +(0, -0.3)..(0.1, -0.3);
\draw(0.7,1.1)--(0.7,-0.3) (0.9,1.1)--(0.9,-0.3) (0.1,1.1)--(0.1,-0.3)(-0.1,1.1)--(-0.1,-0.3);
\draw [blue, fill=white] (-0.3,-0.3) rectangle (0.3,0.3);
\node at (0,0) {$x$};
\begin{scope}[shift={(0,0.8)}]
\draw [blue, fill=white] (-0.3,-0.3) rectangle (0.3,0.3);
\node at (0,0) {$y$};
\end{scope}
\draw (0.1, 1.1) .. controls +(0, 0.3) and +(0, 0.3) .. (0.7, 1.1);
\draw (-0.1, 1.1) .. controls +(0, 0.5) and +(0, 0.5) .. (0.9, 1.1);
\draw (0.1, -0.3) .. controls +(0, -0.3) and +(0, -0.3) .. (0.7, -0.3);
\draw (-0.1, -0.3) .. controls +(0, -0.5) and +(0, -0.5) .. (0.9, -0.3);
\end{tikzpicture}}=\raisebox{-0.7cm}{
\begin{tikzpicture}
\path [fill=gray!40] (-0.1, 0.3) .. controls +(0, 0.5) and +(0, 0.5) .. (0.9, 0.3);
\path [fill=white](0.1, 0.3) .. controls +(0, 0.3) and +(0, 0.3) .. (0.7, 0.3);
\path [fill=gray!40] (-0.1, -0.3) .. controls +(0,- 0.5) and +(0,- 0.5) .. (0.9,- 0.3);
\path [fill=white](0.1,- 0.3) .. controls +(0, -0.3) and +(0, -0.3) .. (0.7, -0.3);
\draw [blue, fill=white] (-0.3,-0.3) rectangle (0.3,0.3);
\node at (0,0) {$x$};
\begin{scope}[shift={(0.8,0)}]
\draw [blue, fill=white] (-0.3,-0.3) rectangle (0.3,0.3);
\node at (0,0) {\rotatebox{180}{$y$}};
\end{scope}
\draw (0.1, 0.3) .. controls +(0, 0.3) and +(0, 0.3) .. (0.7, 0.3);
\draw (-0.1, 0.3) .. controls +(0, 0.5) and +(0, 0.5) .. (0.9, 0.3);
\draw (0.1,- 0.3) .. controls +(0, -0.3) and +(0, -0.3) .. (0.7, -0.3);
\draw (-0.1, -0.3) .. controls +(0,- 0.5) and +(0,- 0.5) .. (0.9,- 0.3);
\end{tikzpicture}}=\raisebox{-0.7cm}{
\begin{tikzpicture}
\path [fill=gray!40] (-0.1, 0.3) .. controls +(0, 0.5) and +(0, 0.5) .. (0.9, 0.3);
\path [fill=white](0.1, 0.3) .. controls +(0, 0.3) and +(0, 0.3) .. (0.7, 0.3);
\path [fill=gray!40] (-0.1, -0.3) .. controls +(0,- 0.5) and +(0,- 0.5) .. (0.9,- 0.3);
\path [fill=white](0.1,- 0.3) .. controls +(0, -0.3) and +(0, -0.3) .. (0.7, -0.3);
\draw [blue, fill=white] (-0.3,-0.3) rectangle (0.3,0.3);
\node at (0,0) {$x$};
\begin{scope}[shift={(0.8,0)}]
\draw [blue, fill=white] (-0.3,-0.3) rectangle (0.3,0.3);
\node at (0,0) {$\overline{y}$};
\end{scope}
\draw (0.1, 0.3) .. controls +(0, 0.3) and +(0, 0.3) .. (0.7, 0.3);
\draw (-0.1, 0.3) .. controls +(0, 0.5) and +(0, 0.5) .. (0.9, 0.3);
\draw (0.1,- 0.3) .. controls +(0, -0.3) and +(0, -0.3) .. (0.7, -0.3);
\draw (-0.1, -0.3) .. controls +(0,- 0.5) and +(0,- 0.5) .. (0.9,- 0.3);
\end{tikzpicture}}=
\raisebox{-1.1cm}{
\begin{tikzpicture}
\path [fill=gray!40] (-0.1, 1.1) .. controls +(0, 0.5) and +(0, 0.5) .. (0.9,1.1)--(0.9,-0.3)..
 controls +(0, -0.5) and +(0, -0.5) .. (-0.1, -0.3);
 \path [fill=white](0.1, 1.1) .. controls +(0, 0.3) and +(0, 0.3) .. (0.7,1.1)--(0.7,-0.3).. controls +(0, -0.3) and +(0, -0.3)..(0.1, -0.3);
\draw(0.7,1.1)--(0.7,-0.3) (0.9,1.1)--(0.9,-0.3) (0.1,1.1)--(0.1,-0.3)(-0.1,1.1)--(-0.1,-0.3);
\draw [blue, fill=white] (-0.3,-0.3) rectangle (0.3,0.3);
\node at (0,0) {$y$};
\begin{scope}[shift={(0,0.8)}]
\draw [blue, fill=white] (-0.3,-0.3) rectangle (0.3,0.3);
\node at (0,0) {$x$};
\end{scope}
\draw (0.1, 1.1) .. controls +(0, 0.3) and +(0, 0.3) .. (0.7, 1.1);
\draw (-0.1, 1.1) .. controls +(0, 0.5) and +(0, 0.5) .. (0.9, 1.1);
\draw (0.1, -0.3) .. controls +(0, -0.3) and +(0, -0.3) .. (0.7, -0.3);
\draw (-0.1, -0.3) .. controls +(0, -0.5) and +(0, -0.5) .. (0.9, -0.3);
\end{tikzpicture}}=tr_{2,+}(yx)\;,
\end{align}
where $\overline {y}$ denotes the $180^\circ$ rotation of $y$.
The Fourier transform $\mathfrak{F}$: $\mathscr{P}_{2,+}\to\mathscr{P}_{2,-}$ is a $90^\circ$ rotation:
\begin{align}\label{eq:Fourier transform}
\mathfrak{F}(x):=\raisebox{-0.9cm}{
\begin{tikzpicture}[scale=1.5]
\path [fill=gray!40] (-0.3, -0.4) rectangle (0.8, 0.9);
\path [fill=white] (0.35, -0.4)--(0.35, 0.5) .. controls +(0, 0.3) and +(0, 0.3) .. (0.65, 0.5)--(0.65, -0.4);
\path[fill=white] (0.15, 0.9) -- (0.15, 0) .. controls +(0, -0.3) and +(0, -0.3) .. (-0.15, 0)--(-0.15, 0.9);
\draw [blue, fill=white] (0,0) rectangle (0.5, 0.5);
\node at (0.25, 0.25) {$x$};
\draw (0.35, 0)--(0.35, -0.4) (0.15, 0.5)--(0.15, 0.9);
\draw (0.35, 0.5) .. controls +(0, 0.3) and +(0, 0.3) .. (0.65, 0.5)--(0.65, -0.4);
\draw (0.15, 0) .. controls +(0, -0.3) and +(0, -0.3) .. (-0.15, 0)--(-0.15, 0.9);
\end{tikzpicture}}
\;.
\end{align}
The Fourier transform  induces the convolution on $\mathscr{P}_{2,+}$:
\begin{align}\label{eq:convolution1}
   x\ast y:=\mathfrak{F}^{-1}(\mathfrak{F}(y)\mathfrak{F}(x))=\raisebox{-0.9cm}{
\begin{tikzpicture}[scale=1.5]
\path [fill=gray!40] (0.15, -0.4) rectangle (1.05, 0.9);
\path [fill=white] (0.35, 0.5) .. controls +(0, 0.3) and +(0, 0.3) .. (0.85, 0.5)-- (0.85, 0) .. controls +(0, -0.3) and +(0, -0.3) .. (0.35, 0)--(0.35, 0.5);
\draw [blue, fill=white] (0,0) rectangle (0.5, 0.5);
\node at (0.25, 0.25) {$x$};
\draw (0.15, 0.5)--(0.15, 0.9) (0.15, 0)--(0.15, -0.4);
\begin{scope}[shift={(0.7, 0)}]
\draw [blue, fill=white] (0,0) rectangle (0.5, 0.5);
\node at (0.25, 0.25) {$y$};
\draw (0.35, 0.5)--(0.35, 0.9)  (0.35, 0)--(0.35, -0.4);
\end{scope}
\draw (0.35, 0.5) .. controls +(0, 0.3) and +(0, 0.3) .. (0.85, 0.5);
\draw (0.35, 0) .. controls +(0, -0.3) and +(0, -0.3) .. (0.85, 0);
\end{tikzpicture}}
\;.
\end{align}

\begin{notation}
We use $\widehat{x}$ to denote $\mathfrak{F}(x)$ in the subsequent sections for simplicity. 
\end{notation}
We next introduce a typical example of $C^*$-planar algebra.
 Let $\Gamma$ be the bipartite graph with one white vertex and $n$ black vertexes, $n\in \mathbb{N}$, and there is exactly one edge connecting the white and black vertexes:
\begin{align*}
\begin{tikzpicture}
\draw (0, 0) --(1, 0.4) (0, 0)--(1, -0.4) (0, 0)--(1, 0.8) (0,0)--(1, -0.8);
\node at (0.9, 0.1) {$\vdots$};
\begin{scope}
\draw [fill=white] (0, 0) circle (0.07);
\end{scope}
\begin{scope}[shift={(1, 0.4)}]
\draw [fill=black] (0, 0) circle (0.07);
\end{scope}
\begin{scope}[shift={(1, -0.4)}]
\draw [fill=black] (0, 0) circle (0.07);
\end{scope}
\begin{scope}[shift={(1, 0.8)}]
\draw [fill=black] (0, 0) circle (0.07);
\end{scope}
\begin{scope}[shift={(1, -0.8)}]
\draw [fill=black] (0, 0) circle (0.07);
\end{scope}
\end{tikzpicture}.
\end{align*}
A spin model is a bipartite graph planar algebra \cite{Jon99} associated to $\Gamma$, denoted by $Spin$.
Then $Spin_{0, +}=\mathbb{C}$ and $Spin_{0, -}$ is spanned by black vertexes, i.e. $Spin_{0, -}$ is of dimension $n$. Moreover,
\begin{align*}
Spin_{1, +}=& \text{span}\left\{
\raisebox{-0.3cm}{\begin{tikzpicture}[scale=1.5]
\path [fill=white] (0.25, 0) rectangle (0.05, 0.5);
\path [fill=gray!40] (0.25, 0) rectangle (0.55, 0.5);
\draw (0.25, 0)--(0.25, 0.5);
\node [right] at (0.25, 0.25) {$j$};
\end{tikzpicture}} : j=1, \ldots, n\right\},\
 Spin_{2, +} = \text{span}\left\{
\raisebox{-0.6cm}{
\begin{tikzpicture}[scale=1.5]
\path [fill=gray!40] (0.15, 0.9) .. controls +(0, -0.4) and +(0, -0.4) .. (0.75, 0.9);
\path [fill=gray!40] (0.75, 0).. controls +(0, 0.4) and +(0, 0.45) ..(0.15, 0);
\draw (0.15, 0.9) .. controls +(0, -0.4) and +(0, -0.4) .. (0.75, 0.9);
\draw (0.75, 0).. controls +(0, 0.4) and +(0, 0.45) ..(0.15, 0);
\node [below] at (0.45, 1) {$j$};
\node [above] at (0.45, 0) {$i$};
\end{tikzpicture}}: i, j =1, \ldots, n\right\}, \\
 Spin_{2, -} = & \text{span}\left\{
\raisebox{-0.6cm}{
\begin{tikzpicture}[scale=1.5]
\path [fill=gray!40] (0.15, 0.9) rectangle (0.35, 0);
\path [fill=gray!40] (0.55, 0.9) rectangle (0.75, 0);
\draw (0.35, 0)--(0.35, 0.9) (0.55, 0)--(0.55, 0.9);
\node [left] at (0.35, 0.45) {$j$};
\node [right] at (0.55, 0.45) {$i$};
\end{tikzpicture}}: i, j =1, \ldots, n\right\},
\end{align*}
and
\begin{align*}
\raisebox{-0.2cm}{
\begin{tikzpicture}
\draw (0,0) circle (0.3);
\end{tikzpicture} }=n^{\frac{1}2},
\quad
\raisebox{-0.2cm}{
\begin{tikzpicture}
\draw[fill=gray!40] (0,0) circle (0.3);
\node at (0,0) {$\substack{j\\i}$};
\end{tikzpicture}}=\begin{cases} 1/n^{\frac{1}2}&i=j\\
0 &i\neq j.
\end{cases}
\end{align*}
We see that $Spin_{2, +}$ is the matrix algebra $M_n(\mathbb{C})$ with basis
\begin{align}\label{eq:matrix unit}
    E_{ij}=n^{\frac{1}2}\raisebox{-0.6cm}{
\begin{tikzpicture}[scale=1.5]
\path [fill=gray!40] (0.15, 0.9) .. controls +(0, -0.4) and +(0, -0.4) .. (0.75, 0.9);
\path [fill=gray!40] (0.75, 0).. controls +(0, 0.4) and +(0, 0.45) ..(0.15, 0);
\draw (0.15, 0.9) .. controls +(0, -0.4) and +(0, -0.4) .. (0.75, 0.9);
\draw (0.75, 0).. controls +(0, 0.4) and +(0, 0.45) ..(0.15, 0);
\node [below] at (0.45, 1) {$j$};
\node [above] at (0.45, 0) {$i$};
\end{tikzpicture}},\ i, j =1, \ldots, n
\end{align}
Dually, $Spin_{2,-}$  is the commutative $C^*$-algebra
of functions on $n\times n$ points, denoted also by $M_n(\mathbb{C})$.
We also see that $Spin_{1,+}$ is a $n$-dimensional vector space. The following is the corresponding between matrix units in spin model to that in tensor networks:
\begin{align}
    n^{\frac{1}2}\raisebox{-0.6cm}{
\begin{tikzpicture}[scale=1.5]
\path [fill=gray!40] (0.15, 0.9) .. controls +(0, -0.4) and +(0, -0.4) .. (0.75, 0.9);
\path [fill=gray!40] (0.75, 0).. controls +(0, 0.4) and +(0, 0.45) ..(0.15, 0);
\draw (0.15, 0.9) .. controls +(0, -0.4) and +(0, -0.4) .. (0.75, 0.9);
\draw (0.75, 0).. controls +(0, 0.4) and +(0, 0.45) ..(0.15, 0);
\node [below] at (0.45, 1) {$j$};
\node [above] at (0.45, 0) {$i$};
\end{tikzpicture}} \longleftrightarrow \raisebox{-0.9cm}{
\begin{tikzpicture}
\draw (0,-1)--(0,1);
\draw [blue, fill=white] (-0.3,-0.3) rectangle (0.3,0.3);
\node at (0,0) {$E_{ij}$};
\node at (0.2,0.65) {$j$};
\node at (0.2,-0.65) {$i$};
\end{tikzpicture}}\;,\quad 
 n^{\frac{1}2}  \raisebox{-0.6cm}{
\begin{tikzpicture}[scale=1.5]
\path [fill=gray!40] (0.15, 0.9) rectangle (0.35, 0);
\path [fill=gray!40] (0.55, 0.9) rectangle (0.75, 0);
\draw (0.35, 0)--(0.35, 0.9) (0.55, 0)--(0.55, 0.9);
\node [left] at (0.35, 0.45) {$j$};
\node [right] at (0.55, 0.45) {$i$};
\end{tikzpicture}} \longleftrightarrow \raisebox{-0.2cm}{
\begin{tikzpicture}[rotate=90]
\draw (0,-1)--(0,1);
\draw [blue, fill=white] (-0.3,-0.3) rectangle (0.3,0.3);
\node at (0,0) {$E_{ij}$};
\node at (0.2,0.65) {$j$};
\node at (0.2,-0.65) {$i$};
\end{tikzpicture}} .
\end{align}
 $\{Spin_{2n,+}\}_{n\geq 0}$ is an unshaded planar algebra and a completely positive map could be realized by an element in $Spin_{4,+}$, which has a pictorial representation as
Equation \eqref{eq:4 tensor action}.

We have known that a matrix $A\in M_n(\mathbb{C})$ is positive if and only if the Fourier transform
$\widehat{\Phi_A}$ is positive-semidefinite. 
This means that we should interpret the matrix as pictures in $Spin_{2, -}$ and the multiplication is the horizontal composition, the positivity is read vertically.
Furthermore, a linear map $\Phi\in \text{Hom}(M_n(\mathbb{C}),M_n(\mathbb{C}) )$ is completely positive if and only if the Fourier transform $\widehat{\Phi}$ is positive-semidefinite. This fact inspires us to introduce the notion of 
$\mathfrak{F}$-positivity for any 2-box $x\in\mathscr{P}_{2,\pm}$.

Suppose $\mathfrak{A}$ is a $C^*$-algebra. Recall that an element $x\in\mathfrak{A}$ is called \textbf{positive}
if $x=yy^*$ for some $y\in\mathfrak{A}$; is called \textbf{strictly positive} if $x$ is positive and invertible,  i.e. there exists $\lambda>0$ such that $x \geq \lambda$.

\begin{definition}\label{def:F positive}
Suppose $\mathscr{P}$ is a C$^*$-planar algebra. An element $x\in \sP_{2, \pm}$ is called
 \textbf{(strictly) $\mathfrak{F}$-positive} if $\widehat{x}$ is a (strictly) positive element in $\mathscr{P}_{2,\mp}$.
\end{definition}

\begin{remark}
Suppose $\mathscr{P}$ is a  C$^*$-planar algebra. An element $x\in \sP_{2, \pm}$ is 
 (strictly) $\mathfrak{F}$-positive if and only if $x=\widehat{y}$ for some (strictly) positive element $y\in\mathscr{P}_{2,\mp}$.
\end{remark}

  \begin{remark}
       The positivity of an element in a $C^*$-algebra is different from the positivity of a matrix. If we consider $M_n(\mathbb{C})$ as a $C^*$-algebra, then an element $A$ in this $C^*$-algebra
is positive  means that $A$ is a  positive-semidefinite matrix.  
If we consider $M_n(\mathbb{C})$ as the two box space $Spin_{2, +}$ of the spin model, then an element $A\in Spin_{2, +}$ is $\mathfrak{F}$-positive
 means that $A$ is a positive matrix.       
    \end{remark}

\subsection{Quon Matrices}\label{subsec:quon}
We introduce a 4-string model for  matrices based on the Quon language~\cite{LWJ17}. We use the notation of planar para algebras for annotated planar algebras~\cite{JL17,JaLW18}, where lines in a diagram are annotated by an element of $\mathbb{Z}_d$ that interpret as a ``charge.'' In Quon, isotopy of diagrams in planar algebra is generalized to ``para-isotopy,'' in which a diagram obtains a phase when two charges pass each other vertically. 

The standard matrix basis for $d\times d$ matrices in Quon is given by a neutral pair in the tensor product of matrix algebra elements annotated by charges $k\in\Z_{d}$.  An elementary  Quon is a neutral pair of charges $\pm k\in\mathbb{Z} _{d}$, where $k$ takes values $0,1,\ldots,d-1$, and $k\equiv k\pm d$.  Thus a Quon matrix is an element of the $d^{2}$-dimensional neutral subspace of the $d^{2}\times d^{2}=d^{4}$-dimensional algebra  generated by 4 parafermions.

\textbf{Standard Basis}:
We choose as the standard neutral basis the product state,
	\begin{align}
	    \boldsymbol{\ket {k}}
	= \frac{1}{d^{\frac12}}
	\raisebox{-.3cm}
	{\begin{tikzpicture}
    \draw (0, -.5) -- (0, 0) arc [radius=.5, start angle=180, end angle=0]--(1, -.5);
    \node at (0, -.25)[left] {$k$};
    \draw (0+1.25, -.5) -- (0+1.25, 0) arc [radius=.5, start angle=180, end angle=0]--(1+1.25, -.5);
    \node at (1+1.25, -.25)[left] {$-k$};
\end{tikzpicture}
}\;, \quad\text{for}\quad k\in \mathbb{Z} _{d}\;.
	\end{align}
We use a  bold font to indicate the Quon basis from the standard basis in \eqref{eq:matrix unit}. The vector $\boldsymbol{\ket{k}}$ can also be regarded as the twisted tensor product of the two-string $\ket{k}$ with $\ket{-k}$, namely $\zeta^{k^{2}}\ket{k}\otimes \ket{-k}$, graphically,
\begin{align}\label{eq:Cap Fourier Relation}
 \raisebox{-.3cm}
	{\begin{tikzpicture}
    \draw (0, -.5) -- (0, 0) arc [radius=.5, start angle=180, end angle=0]--(1, -.5);
    \node at (0, -.25)[left] {$k$};
    \draw (0+1.25, -.5) -- (0+1.25, 0) arc [radius=.5, start angle=180, end angle=0]--(1+1.25, -.5);
    \node at (1+1.25, -.25)[left] {$-k$};
\end{tikzpicture}
}=\zeta^{k^2}\ 
\raisebox{-.3cm}
	{\begin{tikzpicture}
    \draw (0, -.5) -- (0, 0) arc [radius=.5, start angle=180, end angle=0]--(1, -.5);
    \node at (1, -.25)[left] {$k$};
    \draw (0+1.25, -.5) -- (0+1.25, 0) arc [radius=.5, start angle=180, end angle=0]--(1+1.25, -.5);
    \node at (1+1.25, -.25)[left] {$-k$};
\end{tikzpicture}
}\;,
\end{align}
where $\zeta$ is a square root of $e^{\frac{2\pi i}{d}}$ such that $\zeta^{d^2}=1$; this led to the pictorial interpretation of the interpolation of two possible tensor products, introduced in~\cite{JL17}.   The matrix units $\boldsymbol{M_{ k\ell}}$ are then 

	\begin{align} \label{QuonMatrixUnit}
	    \boldsymbol{M_{ k\ell}}=\boldsymbol{\ket{k}\bra{\ell}}
	= \frac{1}{d} \hskip-.2cm
	\raisebox{-1cm}
	 {\begin{tikzpicture}
    \draw (0, .5) -- (0, 0) arc [radius=.5, start angle=180, end angle=360]--(1, .5);
    \node at (0, .25)[left] {$-\ell$};
    \draw (0, -1.75)--(0, -1.25) arc[radius=.5, start angle = 180, end angle=0]--(1, -1.75);
    \node at (0, -1.5) [left] {$k$};
    \draw (0+1.25, .5) -- (0+1.25, 0) arc [radius=.5, start angle=180, end angle=360]--(1+1.25, .5);
    \node at (1+1.25, .25)[left] {$\ell$};
    \draw (0+1.25, -1.75)--(0+1.25, -1.25) arc[radius=.5, start angle = 180, end angle=0]--(1+1.25, -1.75);
    \node at (1+1.25, -1.5) [left] {$-k$};
	\end{tikzpicture}}
	\end{align}
 The adjoint $\Star$ of the unit element is given by
	\begin{align}
	\Star\  	
	\raisebox{-1cm} {\begin{tikzpicture}
    \draw (0, .5) -- (0, 0) arc [radius=.5, start angle=180, end angle=360]--(1, .5);
    \node at (0, .25)[left] {$-\ell$};
    \draw (0, -1.75)--(0, -1.25) arc[radius=.5, start angle = 180, end angle=0]--(1, -1.75);
    \node at (0, -1.5) [left] {$k$};
    \draw (0+1.25, .5) -- (0+1.25, 0) arc [radius=.5, start angle=180, end angle=360]--(1+1.25, .5);
    \node at (1+1.25, .25)[left] {$\ell$};
    \draw (0+1.25, -1.75)--(0+1.25, -1.25) arc[radius=.5, start angle = 180, end angle=0]--(1+1.25, -1.75);
    \node at (1+1.25, -1.5) [left] {$-k$};
	\end{tikzpicture}}
	\ \ =
		\raisebox{-1cm} {\begin{tikzpicture}
    \draw (0, .5) -- (0, 0) arc [radius=.5, start angle=180, end angle=360]--(1, .5);
    \node at (0, .25)[left] {$-k$};
    \draw (0, -1.75)--(0, -1.25) arc[radius=.5, start angle = 180, end angle=0]--(1, -1.75);
    \node at (0, -1.5) [left] {$\ell$};
    \draw (0+1.25, .5) -- (0+1.25, 0) arc [radius=.5, start angle=180, end angle=360]--(1+1.25, .5);
    \node at (1+1.25, .25)[left] {$k$};
    \draw (0+1.25, -1.75)--(0+1.25, -1.25) arc[radius=.5, start angle = 180, end angle=0]--(1+1.25, -1.75);
    \node at (1+1.25, -1.5) [left] {$-\ell$};
	\end{tikzpicture}}\;.
	\end{align}
The representation of the $d\times d$ matrix $T=(t_{k\ell})_{k,\ell\in\Z_{d}}$ 
 is $T=\sum_{k,\ell\in\Z_{d}} T_{k\ell} \boldsymbol{\ket{k}\bra{\ell}}$, where $T_{k\ell}=dt_{k\ell}$,
 pictorially, 
\begin{align} \label{QuonMatrixT}
    {T= \frac{1}{d} \sum_{k,\ell} T_{k\ell} 
		\raisebox{-1cm}
	 {\begin{tikzpicture}
    \draw (0, .5) -- (0, 0) arc [radius=.5, start angle=180, end angle=360]--(1, .5);
    \node at (0, .25)[left] {$-\ell$};
    \draw (0, -1.75)--(0, -1.25) arc[radius=.5, start angle = 180, end angle=0]--(1, -1.75);
    \node at (0, -1.5) [left] {$k$};
    \draw (0+1.25, .5) -- (0+1.25, 0) arc [radius=.5, start angle=180, end angle=360]--(1+1.25, .5);
    \node at (1+1.25, .25)[left] {$\ell$};
    \draw (0+1.25, -1.75)--(0+1.25, -1.25) arc[radius=.5, start angle = 180, end angle=0]--(1+1.25, -1.75);
    \node at (1+1.25, -1.5) [left] {$-k$};
	\end{tikzpicture}}}\;.
\end{align}
We deﬁne a non-neutral circle to have value zero, while a neutral circle has a positive value:
\be\label{eq:neutral circle}
	 \raisebox{-1.25cm}
	 {
	\begin{tikzpicture}
    \draw (0, -.5) -- (0, 0) arc [radius=.8, start angle=180, end angle=0]--(1.6, -.5);
    \node at (0+1.6, 0)[left] {$\scriptstyle \ell$};
    \draw (0, .5-1.1) -- (0, 0-1.1) arc [radius=.8, start angle=180, end angle=360]--(1.6, .5-1.1);
    \node at (0+1.6, -1)[left] {$\scriptstyle -k$};

\end{tikzpicture}}
= 
	 \raisebox{-.65cm}{\begin{tikzpicture}
    \draw (0, 0) circle [radius = .8];
    \node at (.8,0) [left] {$\scriptstyle \ell-k$};
\end{tikzpicture}}
= d^{\frac12}\,\delta_{\ell, k} \;.
	\ee
Equations \eqref{eq:Cap Fourier Relation}, \eqref{QuonMatrixT} and \eqref{eq:neutral circle}  give rise to the multiplication 
	\begin{align}
	\raisebox{-1.2cm}{\begin{tikzpicture}
    \draw [blue] (0,0) rectangle (1, .6);
    \draw (.2, -1)--(.2,0);
    \draw (.2, .6)--(.2,1.7);
    \draw (.4, -1)--(.4,0);
    \draw (.4, .6)--(.4,1.7);
    \draw (.6, -1)--(.6,0);
    \draw (.6, .6)--(.6,1.7);
    \draw (.8, -1)--(.8,0);
    \draw (.8, .6)--(.8,1.7);
    \node at (.5, .3) {$TS$};
\end{tikzpicture}}	
	= 
	\raisebox{-1.2cm}{\begin{tikzpicture}
    \draw (0,0)[blue] rectangle (1, .6);
    \draw (0,1.1)[blue] rectangle (1, 1.7);
    \draw (.2, -.5)--(.2,0);
    \draw (.2, .6)--(.2,1.1);
    \draw (.2, 1.7)--(.2, 2.2);
     \draw (.2+.2, -.5)--(.2+.2,0);
    \draw (.2+.2, .6)--(.2+.2,1.1);
    \draw (.2+.2, 1.7)--(.2+.2, 2.2);
     \draw (.2+.4, -.5)--(.2+.4,0);
    \draw (.2+.4, .6)--(.2+.4,1.1);
    \draw (.2+.4, 1.7)--(.2+.4, 2.2);
     \draw (.2+.6, -.5)--(.2+.6,0);
    \draw (.2+.6, .6)--(.2+.6,1.1);
    \draw (.2+.6, 1.7)--(.2+.6, 2.2);
    \node at (.5, .3) {$T$};
    \node at (.5, 1.4) {$S$};
\end{tikzpicture}
}\;.
\end{align}

\paragraph{\textbf{Fourier Transform of a Quon Matrix:}}
The natural Fourier transform in the Quon model is 
$
\mbff\,= \mathfrak{F}^{2}
$,
where $\mff$ is the Fourier transform \eqref{eq:Fourier transform}.  The Fourier transform on Quon matrix $T$ is  
	\be\label{FourierTransformQuon}
	\mbff (T) = 
	\raisebox{-1cm}{
	\begin{tikzpicture}
    \draw(0,0)[blue] rectangle(1, .6);
    \node at (.5, .3) {$T$};
    \draw (.2, 0) arc[radius=.2, start angle=360, end angle=180]--(-.2,1.4);
    \draw (.4, 0) arc[radius=.4, start angle=360, end angle=180]--(-.4,1.4);
    \draw (.6, .6) arc[radius=.4, start angle=180, end angle=0]--(1.4, -.8);
    \draw (.8, .6) arc[radius=.2, start angle=180, end angle=0]--(1.2, -.8);
    \draw (.2, .6) -- (.2, 1.4);
    \draw (.4, .6) -- (.4, 1.4);
    \draw (.6, 0) -- (.6, -.8);
    \draw (.8, 0) -- (.8, -.8);
\end{tikzpicture}
	} =\frac{1}{d} \sum_{k,\ell} T_{k\ell} 
		\raisebox{-1cm}
	 {\begin{tikzpicture}
    \draw (0, .5) -- (0, 0) arc [radius=.5, start angle=180, end angle=360]--(1, .5);
    \node at (0, .25)[left] {$k$};
    \draw (0, -1.75)--(0, -1.25) arc[radius=.5, start angle = 180, end angle=0]--(1, -1.75);
    \node at (0, -1.5) [left] {$-k$};
    \draw (0+1.25, .5) -- (0+1.25, 0) arc [radius=.5, start angle=180, end angle=360]--(1+1.25, .5);
    \node at (1+1.25, .25)[left] {$-\ell$};
    \draw (0+1.25, -1.75)--(0+1.25, -1.25) arc[radius=.5, start angle = 180, end angle=0]--(1+1.25, -1.75);
    \node at (1+1.25, -1.5) [left] {$\ell$};
	\end{tikzpicture}} \;.
	\ee
In more detail, the matrix units $\bs{M_{\ell k}}$ are transformed into rank-one projections $\bs{P_{-\ell, k}}$, the coefficients of $T_{k\ell}$ in \eqref{FourierTransformQuon}:
\be
{
\bs{P_{-\ell, k}} = \frac{1}{d}
		\raisebox{-1cm}
	 {\begin{tikzpicture}
    \draw (0, .5) -- (0, 0) arc [radius=.5, start angle=180, end angle=360]--(1, .5);
    \node at (0, .25)[left] {$k$};
    \draw (0, -1.75)--(0, -1.25) arc[radius=.5, start angle = 180, end angle=0]--(1, -1.75);
    \node at (0, -1.5) [left] {$-k$};
    \draw (0+1.25, .5) -- (0+1.25, 0) arc [radius=.5, start angle=180, end angle=360]--(1+1.25, .5);
    \node at (1+1.25, .25)[left] {$-\ell$};
    \draw (0+1.25, -1.75)--(0+1.25, -1.25) arc[radius=.5, start angle = 180, end angle=0]--(1+1.25, -1.75);
    \node at (1+1.25, -1.5) [left] {$\ell$};
	\end{tikzpicture}}
}\;.
\ee

In the two string model we had a natural definition of a convolution product, namely  \eqref{eq:convolution1}.  For Quon matrices, there is also the possibility to give two definitions to the convolution of two Quon matrices. Now with the Fourier transform $\mbff$ replacing $\mathfrak{F}$, one can  define 
\begin{align}
   \label{Defn:QuonConvolution}
	T\bs{\circ_{1}}S = \mbff (\mbff^{-1}(T)\, \mbff^{-1}(S) )\;,
	\qquad\text{and}\quad
	T\bs{\circ_{2}}S=  \mbff^{-1} (\mbff(T)\, \mbff(S) ) \;. 
\end{align}	
Note that the Fourier transform $\mbff T$ commutes with $\mbff S$;  so both convolution products are abelian. Moreover,	 the two  convolutions  $T\bs{\circ_{1}}S $ and $T\bs{\circ_{2}}S$ agree, and we denote them by $T\bs{\circ}S$. 
Pictorially,
\begin{align}
    \label{QuonConvolutionDiagram}
T\bs{\circ}S 
	= 
	\raisebox{-1cm}{
	\begin{tikzpicture}
    \draw(0,0) [blue]rectangle(1, .6);
    \node at (.5, .3) {$T$};
    \draw(1.5,0)[blue] rectangle(2.5, .6);
    \node at (2, .3) {$S$};
    \draw (.2, .6) -- (.2, 1.4);
    \draw (.4, .6) -- (.4, 1.4);
    \draw (.2, 0) -- (.2, -.8);
    \draw (.4, 0) -- (.4, -.8);
    \draw (.8, 0) arc[radius=.45, start angle=180, end angle=360];
    \draw (.6, 0) arc[radius=.65, start angle=180, end angle=360];
    \draw (.8, .6) arc[radius=.45, start angle=180, end angle=0];
    \draw (.6, .6) arc[radius=.65, start angle=180, end angle=0];
    \draw (2.3, .6) -- (2.3, 1.4);
    \draw (2.1, .6) -- (2.1, 1.4);
    \draw (2.3, 0) -- (2.3, -.8);
    \draw (2.1, 0) -- (2.1, -.8);
\end{tikzpicture}
}
=	\raisebox{-1cm}{
	\begin{tikzpicture}
    \draw(0,0)[blue] rectangle(1, .6);
    \node at (.5, .3) {$S$};
    \draw(1.5,0)[blue] rectangle(2.5, .6);
    \node at (2, .3) {$T$};
    \draw (.2, .6) -- (.2, 1.4);
    \draw (.4, .6) -- (.4, 1.4);
    \draw (.2, 0) -- (.2, -.8);
    \draw (.4, 0) -- (.4, -.8);
    \draw (.8, 0) arc[radius=.45, start angle=180, end angle=360];
    \draw (.6, 0) arc[radius=.65, start angle=180, end angle=360];
    \draw (.8, .6) arc[radius=.45, start angle=180, end angle=0];
    \draw (.6, .6) arc[radius=.65, start angle=180, end angle=0];
    \draw (2.3, .6) -- (2.3, 1.4);
    \draw (2.1, .6) -- (2.1, 1.4);
    \draw (2.3, 0) -- (2.3, -.8);
    \draw (2.1, 0) -- (2.1, -.8);
\end{tikzpicture}
}
= S\circ T\;.
\end{align}
    The Hadamard product  $T \times S$ of two $d\times d$ matrices is 
$(T\times S)_{k\ell}=t_{k\ell}s_{k\ell}$. The convolution of two Quon  matrices  equals their Hadamard product: $T\times S=T\bs{\circ} S$.

One commonly encounters two types of positivity for matrices.  The first positivity is basis dependent, and here we study the Quon basis. Let us say that a matrix $T$ with entries $t_{k\ell}$ is ``point-wise''\footnote{We use point-wise is the sense that a function $f(t)$ is said to be ``point-wise'' non-negative if $f(t)\geq0$.} non-negative if  $t_{k\ell}\geq0$, for all $k,\ell$. Analogously one says that $T$ is point-wise \textit{strictly} positive if $t_{k\ell}>0$, for all $k,\ell$. 

The second type of positivity is independent of basis, and refers to the spectrum of $T$, namely the set of numbers $z$ in the complex plane for which $z-T$ is not invertible. For an hermitian matrix the eigenvalues are real, but the representation of matrices in spin model shows that the matrix has a reflection-positive representation, if and only if the eigenvalues are non-negative (See \S \ref{subsec:planar algebra}).

\section{PF Theory for Planar Algebras}\label{sec:PF theorem}
 In this section, we obtain the PF theorem for $C^*$-planar algebras. Moreover,
  we provide many equivalent conditions for the irreducibility and planar algebra structures play 
  an essential role.   We also prove the quantum Collatz-Wielandt formula.

  We shall recall the quantum Schur product theorem~\cite{Liu16} in $C^*$-planar algebras: For two (strictly) positive elements $x,y\in\mathscr{P}_{2,\pm}$, their convolution $x\ast y$ is (strictly) positive. This fact has been deeply applied in classification of subfactor planar algebras~\cite{Liu16} and as an analytic obstruction of unitary categorification of fusion rings \cite{LPW21}. Here, we would use it again to show the PF theorem.
  
 \subsection{The PF Theorem for $C^*$-Planar Algebras}
We give a proof of the PF theorem in $C^*$-planar algebras.
\begin{proposition}  \label{prop:PF for planar algebra}
Let $x$ be a $\mathfrak{F}$-positive element of the $2$-box space $\mathscr{P}_{2,\pm}$ of a $C^*$-planar algebra.
Then there exists a non-zero, $\mathfrak{F}$-positive  $y\in\mathscr{P}_{2,\pm}$ such that
\begin{align}
  \raisebox{-1.4cm}{
\begin{tikzpicture}
\path  (-0.1, -2) rectangle (0.1, 1);
\draw (-0.1,-2)--(-0.1,1);
\draw (0.1,-2)--(0.1,1);
\draw [blue, fill=white] (-0.3,-0.3) rectangle (0.3,0.3);
\node at (0,0) {$y$};
\begin{scope}[shift={(0, -1)}]
\draw [blue, fill=white] (-0.3,-0.3) rectangle (0.3,0.3);
\node at (0,0) {$x$};
\end{scope}
\end{tikzpicture}}=  \raisebox{-1.4cm}{
\begin{tikzpicture}
\path  (-0.1, -2) rectangle (0.1, 1);
\draw (-0.1,-2)--(-0.1,1);
\draw (0.1,-2)--(0.1,1);
\draw [blue, fill=white] (-0.3,-0.3) rectangle (0.3,0.3);
\node at (0,0) {$x$};
\begin{scope}[shift={(0, -1)}]
\draw [blue, fill=white] (-0.3,-0.3) rectangle (0.3,0.3);
\node at (0,0) {$y$};
\end{scope}
\end{tikzpicture}}=r\raisebox{-0.9cm}{
\begin{tikzpicture}
\path (-0.1, -1) rectangle (0.1, 1);
\draw (-0.1,-1)--(-0.1,1);
\draw (0.1,-1)--(0.1,1);
\draw [blue, fill=white] (-0.3,-0.3) rectangle (0.3,0.3);
\node at (0,0) {$y$};
\end{tikzpicture}}\;,
\end{align}
where $r$ is the spectral radius of $x$.
Equivalently, 
\begin{align}\label{eq:perron operator Fourier}
\raisebox{-0.9cm}{
\begin{tikzpicture}[scale=1.5]
\path  (0.15, -0.4) rectangle (1.05, 0.9);
\path [fill=white] (0.35, 0.5) .. controls +(0, 0.3) and +(0, 0.3) .. (0.85, 0.5)-- (0.85, 0) .. controls +(0, -0.3) and +(0, -0.3) .. (0.35, 0)--(0.35, 0.5);
\draw [blue, fill=white] (0,0) rectangle (0.5, 0.5);
\node at (0.25, 0.25) {$\widehat{x}$};
\draw (0.15, 0.5)--(0.15, 0.9) (0.15, 0)--(0.15, -0.4);
\begin{scope}[shift={(0.7, 0)}]
\draw [blue, fill=white] (0,0) rectangle (0.5, 0.5);
\node at (0.25, 0.25) {$\widehat{y}$};
\draw (0.35, 0.5)--(0.35, 0.9)  (0.35, 0)--(0.35, -0.4);
\end{scope}
\draw (0.35, 0.5) .. controls +(0, 0.3) and +(0, 0.3) .. (0.85, 0.5);
\draw (0.35, 0) .. controls +(0, -0.3) and +(0, -0.3) .. (0.85, 0);
\end{tikzpicture}}=\raisebox{-0.9cm}{
\begin{tikzpicture}[scale=1.5]
\path  (0.15, -0.4) rectangle (1.05, 0.9);
\path [fill=white] (0.35, 0.5) .. controls +(0, 0.3) and +(0, 0.3) .. (0.85, 0.5)-- (0.85, 0) .. controls +(0, -0.3) and +(0, -0.3) .. (0.35, 0)--(0.35, 0.5);
\draw [blue, fill=white] (0,0) rectangle (0.5, 0.5);
\node at (0.25, 0.25) {$\widehat{y}$};
\draw (0.15, 0.5)--(0.15, 0.9) (0.15, 0)--(0.15, -0.4);
\begin{scope}[shift={(0.7, 0)}]
\draw [blue, fill=white] (0,0) rectangle (0.5, 0.5);
\node at (0.25, 0.25) {$\widehat{x}$};
\draw (0.35, 0.5)--(0.35, 0.9)  (0.35, 0)--(0.35, -0.4);
\end{scope}
\draw (0.35, 0.5) .. controls +(0, 0.3) and +(0, 0.3) .. (0.85, 0.5);
\draw (0.35, 0) .. controls +(0, -0.3) and +(0, -0.3) .. (0.85, 0);
\end{tikzpicture}}=r\raisebox{-0.9cm}{
\begin{tikzpicture}
\path (-0.1, -1) rectangle (0.1, 1);
\draw (-0.1,-1)--(-0.1,1);
\draw (0.1,-1)--(0.1,1);
\draw [blue, fill=white] (-0.3,-0.3) rectangle (0.3,0.3);
\node at (0,0) {$\widehat{y}$};
\end{tikzpicture}}
\;.
\end{align}
\end{proposition}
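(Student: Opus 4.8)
The plan is to prove the multiplicative form $xy=yx=ry$ and then read off the convolution form \eqref{eq:perron operator Fourier} for free: the Fourier transform $\mathfrak{F}$ intertwines the multiplication of $\mathscr{P}_{2,\pm}$ with the convolution of $\mathscr{P}_{2,\mp}$, so applying $\mathfrak{F}$ to each of the three terms turns products of $x,y$ into convolutions of $\widehat{x},\widehat{y}$. The two displayed equations are therefore the same statement transported across $\mathfrak{F}$, and it suffices to produce a nonzero $\mathfrak{F}$-positive $y$ with $xy=yx=ry$.

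First I would set $V=\mathscr{P}_{2,\pm}$, a finite-dimensional $C^*$-algebra, and let $C=\mathfrak{F}^{-1}(\mathscr{P}_{2,\mp}^{+})$ be the cone of $\mathfrak{F}$-positive elements. Since $\mathfrak{F}$ is a linear isomorphism and the positive cone of a $C^*$-algebra is closed, pointed, and generating, $C$ is a proper cone in $V$. The positivity input is the quantum Schur product theorem: if $z\in C$ then $\widehat{z}\ge 0$, and $\widehat{xz}$ is (in either order) the convolution of the two positive elements $\widehat{x},\widehat{z}$, hence positive; thus left multiplication $L_x(z)=xz$ maps $C$ into $C$, and likewise right multiplication $R_x(z)=zx$ preserves $C$. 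Because $V$ is unital, $\operatorname{spec}(L_x)=\operatorname{spec}(R_x)=\operatorname{spec}(x)$, so both operators have spectral radius $r=r(x)$.

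Applying the finite-dimensional Perron--Frobenius/Krein--Rutman theorem to the cone-preserving operator $L_x$ gives that $r$ is an eigenvalue with an eigenvector in $C$; more usefully, since the peripheral spectrum of a positive operator on a proper cone is semisimple, the Ces\`aro averages $\tfrac1N\sum_{k=1}^{N}(L_x/r)^{k}$ converge to the spectral projection $P$ onto $E:=\ker(L_x-r)$, and as each term preserves the closed convex cone $C$, so does $P$. This positivity of $P$ is exactly what upgrades the one-sided statement: for $e\in E$, writing $e=c_1-c_2$ with $c_i\in C$ gives $e=Pe=Pc_1-Pc_2$ with $Pc_i\in C\cap E$, so $C\cap E$ is generating, hence proper, in $E$. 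Now $R_x$ commutes with $L_x$, so it preserves $E$ and the subcone $C\cap E$; moreover $r$ remains an eigenvalue of $R_x|_E$ because $E$ contains the common eigenvector $vw^{*}$ formed from a right eigenvector $xv=rv$ and a left eigenvector $w^{*}x=rw^{*}$, forcing $r(R_x|_E)=r$. A second application of Perron--Frobenius, to $R_x|_E$ on the proper cone $C\cap E$, then yields a nonzero $y\in C\cap E$ with $R_x y=ry$; since $y\in E$ also gives $L_x y=ry$, this $y$ is $\mathfrak{F}$-positive and satisfies $xy=yx=ry$.

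I expect the main obstacle to be precisely this passage from a one-sided to a two-sided eigenvector. Two commuting positive operators need not share a Perron eigenvector in general, since their peripheral eigenspaces can be disjoint, so the argument must use the algebra structure rather than mere commutation. The crux is that the spectral projection $P$ for $L_x$ is positive, which keeps $C\cap E$ full-dimensional inside the Perron eigenspace and lets Perron--Frobenius run a second time; verifying $r(R_x|_E)=r$ through the explicit common eigenvector then closes the gap.
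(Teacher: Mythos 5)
Your reduction of the convolution form to the multiplicative form $xy=yx=ry$, and the observation that the quantum Schur product theorem makes left and right multiplication by $x$ preserve the proper cone $C=\mathfrak{F}^{-1}(\mathscr{P}_{2,\mp}^{+})$, are both sound, and your route is genuinely different from the paper's (which works with the Laurent expansion of the resolvent $(\lambda-x)^{-1}$ at the pole $\lambda=r$ and takes $y=x_{(-\ell)}$, the leading coefficient; that element is a limit of polynomials in $x$, so it commutes with $x$ and the two-sided issue never arises). However, there is a genuine gap at the pivotal step: the claim that ``the peripheral spectrum of a positive operator on a proper cone is semisimple'' is false for reducible cone-preserving operators, and your construction of the positive spectral projection $P$ collapses with it. Concretely, in the spin model take $x=E_{11}+E_{12}+E_{22}\in Spin_{2,+}$; this is $\mathfrak{F}$-positive (its matrix entries are nonnegative), $r(x)=1$, and $L_x$ on $M_2(\mathbb{C})$ has the single eigenvalue $1$ with index $2$. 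The Ces\`aro averages $\frac1N\sum_{k=1}^{N}L_x^{k}=L_{\frac1N\sum_{k=1}^N x^{k}}$ diverge (the off-diagonal entry grows like $N/2$), and there is no spectral projection onto the eigenspace $E=\ker(L_x-1)$ at all: the Riesz projection attached to the eigenvalue $1$ is the identity, whose range is the generalized eigenspace $M_2(\mathbb{C})$, not $E$. Without a positive $P$ mapping onto $E$, your argument that $C\cap E$ is generating in $E$ has no support, and the second application of Perron--Frobenius to $R_x|_E$ has no proper cone to act on.

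The gap can be closed, but essentially only by importing the paper's device: replace the Ces\`aro limit by $\lim_{\mathbb{R}\ni\lambda\to r^{+}}(\lambda-r)^{\ell}R(\lambda,L_x)$, where $\ell$ is the order of the pole of the resolvent at $r$. Each approximant $(\lambda-r)^{\ell}\sum_{j\geq 0}L_x^{j}/\lambda^{j+1}$ preserves $C$ for real $\lambda>r$, the limit exists, and its value at the identity is exactly the paper's $x_{(-\ell)}$, which is simultaneously a left and right eigenvector because it is built from powers of $x$. Alternatively, one could try to keep your two-stage scheme by restricting $R_x$ to the linear span of $C\cap E$ (nonzero by the first application of Perron--Frobenius) and verifying $r(R_x)$ restricted there still equals $r$ via $tr_{2,\pm}(xy)=tr_{2,\pm}(yx)$ and faithfulness of the trace on $\mathfrak{F}$-positive elements; but as written, the semisimplicity assertion and the resulting positivity of $P$ are not correct, so the proof as it stands does not go through.
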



\begin{proof}
We first show that $r\in \sigma(x)$, the spectrum of $x$. For any $\lambda\in \rho(x)$, let $R(\lambda, x)=(\lambda-x)^{-1}$ be the resolvent of $x$, which defines an element in $\mathscr{P}_{2,\pm}$.
We have that $R(\lambda, x)$ is analytic in $\rho(x)$.
For any $|\lambda|> r$, we have that
\begin{align}\label{eq:resolvent}
R(\lambda, x)=\sum_{j=0}^\infty \frac{x^j}{\lambda^{j+1}}.
\end{align}
We assume that $r\notin \sigma(x)$.
By the fact that $R(\lambda, x)$ is analytic outside of the spectrum $\sigma(x)$, we see that $\displaystyle \lim_{\lambda\to r} R(\lambda, x)=R(r, x)$.
By the continuity of  Fourier transform, 
\begin{align}\label{paeq:eigen1}
\lim_{z\to r} \widehat{R(\lambda, x)}=\widehat{R(r, x)}.
\end{align}
By the quantum Schur product theorem, we have that $\widehat{x}^{(*j)}\geq 0$ for any $j \geq 1$, where $\widehat{x}^{(*j)}$ is $\underbrace{\widehat{x}*\cdots * \widehat{x}}_{j}$.
Therefore, for any real number $\lambda>r$,
\begin{align*}
\widehat{R(\lambda, x)}=\sum_{j=0}^\infty \frac{\widehat{x}^{(*j)}}{\lambda^{j+1}}\geq 0.
\end{align*}
By Equation (\ref{paeq:eigen1}), 
\begin{align*}
\widehat{R(r, x)}=\sum_{j=0}^\infty \frac{\widehat{x}^{(*j)}}{r^{j+1}}\geq 0.
\end{align*}
For any positive linear functional $\phi$ on $\mathscr{P}_{2,\pm}$, 
\begin{align*}
\phi(\widehat{R(r, x)})=\sum_{j=0}^\infty \frac{\phi(\widehat{x}^{(*j)})}{r^{j+1}}< \infty.
\end{align*}
Hence for any $|\lambda|=r$, 
\begin{align*}
\left|\sum_{j=0}^\infty \frac{\phi\left(\widehat{x}^{(*j)}\right)}{\lambda^{j+1}} \right|< \infty.
\end{align*}
By the uniform boundedness principle, we have that $\displaystyle \sum_{j=0}^\infty \frac{\phi(\widehat{x}^{(*j)})}{\lambda^{j+1}}$ is bounded for any $|\lambda|=r$.
This implies that $\{\lambda\in \bC: |\lambda|=r\}\subset \bC\backslash \sigma(x)$, which is contradict to the fact that $r$ is the spectral radius of $x$. So $r\in\sigma(x)$.

Next we show that there exists a nonzero $\mathfrak{F}$-positive element $y\in\mathscr{P}_{2,\pm}$ such that $xy=yx=ry$.
Note that the algebraic multiplicity of $r$ for $x$ is finite, we see that the order of the pole $r$ for $R(\lambda, x)$ is also finite.
Let $\ell$ be the order of the pole $r$ for $R(\lambda, x)$.
We take the Laurent series of $R(\lambda, x)$ at $\lambda=r$ as follows:
\begin{align*}
R(\lambda, x)=\sum_{j=-\ell}^\infty x_{(j)} (\lambda-r)^j,
\end{align*}
where
\begin{align}\label{paeq:coeff1}
x_{(j)}=\frac{1}{2\pi i} \int_C \frac{R(\lambda,x)}{(\lambda-r)^{j+1}} d\lambda,
\end{align}
and $C$ is a circle centered at $r$ with radius small enough such that $r$ is the only pole.
For any $k, m \geq -\ell$, we have
\begin{align*}
x_{(k)} x_{(m)}= & \frac{1}{2\pi i} \int_{C_1} \frac{R(\lambda_1, x)}{(\lambda_1-r)^{k+1}} d\lambda_1 \frac{1}{2\pi i}
\int_{C_2} \frac{R(\lambda_2, x)}{(\lambda_2-r)^{m+1}} d\lambda_2 \\
=&- \frac{1}{4\pi^2 } \int_{C_1} \int_{C_2} \frac{R(\lambda_1, x) R(\lambda_2, x)}{(\lambda_1-r)^{k+1} (\lambda_2-r)^{m+1}} d\lambda_1d\lambda_2 \\
=& - \frac{1}{4\pi^2 } \int_{C_1} \int_{C_2} \frac{R(\lambda_1, x) -R(\lambda_2, x)}{(\lambda_2-\lambda_1)(\lambda_1-r)^{k+1} (\lambda_2-r)^{m+1}} d\lambda_1d\lambda_2 \\
= & \left\{\begin{array}{cc}
-x_{(k+m+1)} & k, m \geq 0 \\
0 & k<0, m \geq 0 \ or\ k\geq0,m<0\\
x_{(k+m+1)} & k,m <0
\end{array} \right.
\end{align*}
where $C_1, C_2$ are circles centered at $r$ with different radius such that $C_1$ is contained in $C_2$. Then
\begin{align*}
x_{(-\ell)}R(\lambda,x)  =\sum_{j=-\ell}^\infty (\lambda-r)^j x_{(-\ell)} x_{(j)} =(\lambda-r)^{-1} x_{(-\ell)} x_{(-1)}=\frac{x_{(-\ell)}}{\lambda-r}.
\end{align*}
Thus $(\lambda-r)x_{(-\ell)} R(\lambda,x) =x_{(-\ell)}$.
By Equation \eqref{eq:resolvent} and taking the coefficient of $\lambda^{-2}$, we have
\begin{align}\label{eq:x-l}
x_{(-\ell)}x =rx_{(-\ell)}.
\end{align}
Note that
\begin{align}\label{eq:perron operator}
\widehat{x_{(-\ell)}}=\lim_{\mathbb{R}\ni \lambda\to r^+}(\lambda-r)^\ell \widehat{R(\lambda,x)} =\lim_{\mathbb{R}\ni \lambda\to r^+}(\lambda-r)^\ell\sum_{j=0}^\infty \frac{\widehat{x}^{(\ast j)} }{\lambda^{j+1}}\geq0.
\end{align}
So $x_{(-\ell)}$ is $\mathfrak{F}$-positive. Similarly, we have $xx_{(-\ell)} =rx_{(-\ell)}$. Let $y=x_{(-\ell)}$, then the conclusion holds.
\end{proof}

\begin{remark}\label{rem:one box PF}
In Proposition \ref{prop:PF for planar algebra}, we let
\begin{align*}
y_1
=
\raisebox{-0.7cm}{
\begin{tikzpicture}[scale=1.5]
\draw [blue] (0,0) rectangle (0.7, 0.5);
\node at (0.35, 0.25) {$\widehat{y}$};
\draw (-0.15, 0)--(-0.15, 0.5) (0.55, 0)--(0.55, -0.2) (0.55, 0.5)--(0.55, 0.7);
\draw (0.15, 0.5) .. controls +(0, 0.3) and +(0, 0.3) .. (-0.15, 0.5);
\draw (0.15, 0) .. controls +(0, -0.3) and +(0, -0.3) .. (-0.15, 0);
\end{tikzpicture}},
\quad
y_2=
\raisebox{-0.7cm}{
\begin{tikzpicture}[scale=1.5]
\draw [blue] (0,0) rectangle (0.7, 0.5);
\node at (0.35, 0.25) {$\widehat{y}$};
\draw (0.85, 0)--(0.85, 0.5) (0.15, 0)--(0.15, -0.2) (0.15, 0.5)--(0.15, 0.7);
\draw (0.55, 0.5) .. controls +(0, 0.3) and +(0, 0.3) .. (0.85, 0.5);
\draw (0.55, 0) .. controls +(0, -0.3) and +(0, -0.3) .. (0.85, 0);
\end{tikzpicture}}.
\end{align*}
We have that for any $\mathfrak{F}$-positive element  $x\in\mathscr{P}_{2,\pm}$, there exist nonzero positive elements $y_1\in\mathscr{P}_{1,\pm}$ and $y_2\in\mathscr{P}_{1,\mp}$
such that
\begin{align*}
y_1\ast \widehat{x}:=\raisebox{-0.9cm}{
\begin{tikzpicture}[scale=1.5]
\begin{scope}[shift={(-0.6, 0)}]
\draw [blue] (0,0) rectangle (0.35, 0.5);
\node at (0.175, 0.25) {$y_1$};
\end{scope}
\draw [blue] (0,0) rectangle (0.7, 0.5);
\node at (0.35, 0.25) {$\widehat{x}$};
\draw  (0.5, 0)--(0.5, -0.4) (0.5, 0.5)--(0.5, 0.9);
\draw (0.2, 0.5) .. controls +(0, 0.4) and +(0, 0.4) .. (-0.425, 0.5);
\draw (0.2, 0) .. controls +(0, -0.4) and +(0, -0.4) .. (-0.425, 0);
\end{tikzpicture}}
=r
\raisebox{-0.9cm}{
\begin{tikzpicture}[scale=1.5]
\begin{scope}[shift={(1.5, 0)}]
\draw [blue] (0,0) rectangle (0.35, 0.5);
\node at (0.175, 0.25) {$y_1$};
\draw  (0.175, 0)--(0.175, -0.4) (0.175, 0.5)--(0.175, 0.9);
\end{scope}
\end{tikzpicture}},\quad
\widehat{x}\ast y_2:=\raisebox{-0.9cm}{
\begin{tikzpicture}[scale=1.5]
\begin{scope}[shift={(0.95, 0)}]
\draw [blue] (0,0) rectangle (0.35, 0.5);
\node at (0.175, 0.25) {$y_2$};
\end{scope}
\draw [blue] (0,0) rectangle (0.7, 0.5);
\node at (0.35, 0.25) {$\widehat{x}$};
\draw  (0.2, 0)--(0.2, -0.4) (0.2, 0.5)--(0.2, 0.9);
\draw (0.5, 0.5) .. controls +(0, 0.4) and +(0, 0.4) .. (1.125, 0.5);
\draw (0.5, 0) .. controls +(0, -0.4) and +(0, -0.4) .. (1.125, 0);
\end{tikzpicture}}
=r
\raisebox{-0.9cm}{
\begin{tikzpicture}[scale=1.5]
\begin{scope}[shift={(1.5, 0)}]
\draw [blue] (0,0) rectangle (0.35, 0.5);
\node at (0.175, 0.25) {$y_2$};
\draw  (0.175, 0)--(0.175, -0.4) (0.175, 0.5)--(0.175, 0.9);
\end{scope}
\end{tikzpicture}}
\end{align*}
where $r$ is the spectral radius of $x$.
\end{remark}

\begin{remark}
Suppose $\mathscr{P}$ is a  $C^*$-planar algebra. We call a $2n$-box $x\in\mathscr{P}_{2n,\pm}$ 
 is $\mathfrak{F}$-positive if $\mathfrak{F}^n(x)\geq0$. Here $\mathfrak{F}$ is the one-click rotation.
Proposition \ref{prop:PF for planar algebra} also holds for $2n$-box:  Let $x$ be a $\mathfrak{F}$-positive element of the $2n$-box space $\mathscr{P}_{2n,\pm}$ of a $C^*$-planar algebra.
Then there exists a non-zero, $\mathfrak{F}$-positive  $y\in\mathscr{P}_{2n,\pm}$ such that
\begin{align}
  \raisebox{-1.4cm}{
\begin{tikzpicture}
\path  (-0.1, -2) rectangle (0.1, 1);
\draw [line width=0.1cm](-0.1,-2)--(-0.1,1);
\draw [line width=0.1cm](0.1,-2)--(0.1,1);
\draw [blue, fill=white] (-0.3,-0.3) rectangle (0.3,0.3);
\node at (0,0) {$y$};
\begin{scope}[shift={(0, -1)}]
\draw [blue, fill=white] (-0.3,-0.3) rectangle (0.3,0.3);
\node at (0,0) {$x$};
\end{scope}
\end{tikzpicture}}=  \raisebox{-1.4cm}{
\begin{tikzpicture}
\path  (-0.1, -2) rectangle (0.1, 1);
\draw[line width=0.1cm] (-0.1,-2)--(-0.1,1);
\draw [line width=0.1cm](0.1,-2)--(0.1,1);
\draw [blue, fill=white] (-0.3,-0.3) rectangle (0.3,0.3);
\node at (0,0) {$x$};
\begin{scope}[shift={(0, -1)}]
\draw [blue, fill=white] (-0.3,-0.3) rectangle (0.3,0.3);
\node at (0,0) {$y$};
\end{scope}
\end{tikzpicture}}=r\raisebox{-0.9cm}{
\begin{tikzpicture}
\path (-0.1, -1) rectangle (0.1, 1);
\draw [line width=0.1cm] (-0.1,-1)--(-0.1,1);
\draw[line width=0.1cm] (0.1,-1)--(0.1,1);
\draw [blue, fill=white] (-0.3,-0.3) rectangle (0.3,0.3);
\node at (0,0) {$y$};
\end{tikzpicture}}\;,
\end{align}
where $r$ is the spectral radius of $x$ and $\raisebox{-0.5cm}{
\begin{tikzpicture}
\draw [line width=0.1cm] (0,-0.5)--(0,0.5);
\end{tikzpicture}}$ represents $n$ strings.
\end{remark}

\subsection{Uniqueness of PF Eigenvector}
We introduce the irreduciblity for $\mathfrak{F}$-positive elements  in planar algebras, which is analoug to irreducible matrices, and use it to study the uniqueness of PF eigenvector.
\begin{definition}\label{def:irreducible}
Suppose $\mathscr{P}$ is a $C^*$-planar algebra and $x\in\mathscr{P}_{2,\pm}$ is $\mathfrak{F}$-positive. We say $x$ is \textbf{irreducible} on $\mathscr{P}_{1,\pm}$, if for any projection $p\in\mathscr{P}_{1,\pm}$, the inequality  $p\ast \widehat{x}\leq \lambda p$, with $\lambda>0$, ensures that  $p=0$ or $I$.
\end{definition}

In the following, we present some equivalent descriptions for irreducibility of a $\mathfrak{F}$-positive element. Moreover, we also have several
 easy-to-check,  sufficient conditions for irreducibility. 

\begin{theorem}\label{thm:equivirr}
Suppose $\sP$ is a $C^*$-planar algebra and $x\in \sP_{2, +}$ is $\mathfrak{F}$-positive.
Let $\displaystyle B=\bigvee_{k=0}^\infty \mathcal{R}\left(\widehat{x}^{(*k)}\right)$.
Then the following statements satisfy 
$$(1)\Rightarrow (2)\Rightarrow (3)\Leftrightarrow (4) \Leftrightarrow (5) \Leftrightarrow (6) \Leftrightarrow (7) \Leftrightarrow (8) \Leftrightarrow (9)\;.$$
\begin{enumerate}
\item $B=1$ and $\dim\mathscr{P}_{0,+}=1$.
\item $B=1$ and $tr_{0,+}\left(\raisebox{-0.5cm}{
\begin{tikzpicture}
\draw [blue, fill=white] (-0.3,-0.3) rectangle (0.3,0.3);
\node at (0,0) {$p_1$};
\draw (0, 0.3) .. controls +(0, 0.3) and +(0, 0.3) .. (0.6, 0.3);
\draw (0, -0.3) .. controls +(0, -0.3) and +(0, -0.3) .. (0.6, -0.3);
\draw(0.6,0.3)--(0.6,-0.3);
\end{tikzpicture}}\raisebox{-0.5cm}{
\begin{tikzpicture}
\draw [blue, fill=white] (-0.3,-0.3) rectangle (0.3,0.3);
\node at (0,0) {$p_2$};
\draw (0, 0.3) .. controls +(0, 0.3) and +(0, 0.3) .. (0.6, 0.3);
\draw (0, -0.3) .. controls +(0, -0.3) and +(0, -0.3) .. (0.6, -0.3);
\draw(0.6,0.3)--(0.6,-0.3);
\end{tikzpicture}}\right)\neq0$ for any minimal projections $p_1, p_2$ in $\mathscr{P}_{1,+}$.
\item $B(\overline{ p_1}\otimes  p_2)\neq 0$ for any minimal projections $p_1, p_2$ in $\mathscr{P}_{1,+}$.
\item $tr_{2,-}(B(\overline{ p_1}\otimes  p_2))\neq 0$ for any minimal projections $p_1, p_2$ in $\mathscr{P}_{1,+}$.
\item $\mathcal{R}(p*B)=I$ for any nonzero projection $p\in \mathscr{P}_{1,+}$.
\item $x$ is irreducible on $\mathscr{P}_{1,+}$.
\item For any non-zero projection $p\in \mathscr{P}_{1,+}$,
\begin{align*}
p*(e_1+\widehat{x})^{(*(d-1))} >0
\end{align*}
where $d=\dim\mathscr{P}_{1,+}$, $e_1$ is the Jones projection.
\item For any positive elements $y_1, y_2\in \mathscr{P}_{1,+}$ with $\langle y_1, y_2 \rangle =0$, there exists $k_0\in \bN$, $k_0<d$, such that
\begin{align*}
\left\langle y_1*\widehat{x}^{(*k_0)} , y_2\right\rangle >0.
\end{align*}
\item For any projection $q\in\mathscr{P}_{1,-}$, the inequality  $ \widehat{x}\ast q\leq \lambda q$, with $\lambda>0$, ensures that  $q=0$ or $I$.
\end{enumerate}

\end{theorem}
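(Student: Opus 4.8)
The plan is to separate the two one-directional implications from the long equivalence block, and to organize the latter around a single positive linear map. For $(1)\Rightarrow(2)$, observe that $\dim\mathscr{P}_{0,+}=1$ forces $\mathscr{P}_{0,+}=\mathbb{C}$, so the product appearing in $(2)$ is just $tr_{1,+}(p_1)\,tr_{1,+}(p_2)$, both factors strictly positive since $p_1,p_2$ are nonzero projections; the hypothesis $B=1$ carries over verbatim. For $(2)\Rightarrow(3)$ the trace condition is exactly what guarantees $\overline{p_1}\otimes p_2\neq0$: by multiplicativity of the trace under horizontal juxtaposition $tr_{2,-}(\overline{p_1}\otimes p_2)=tr_{1,+}(p_1)\,tr_{1,+}(p_2)\neq0$, and then $B=1$ gives $B(\overline{p_1}\otimes p_2)=\overline{p_1}\otimes p_2\neq0$.

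For the equivalence block I would introduce the positive linear map $\Phi\colon\mathscr{P}_{1,+}\to\mathscr{P}_{1,+}$, $\Phi(a)=a\ast\widehat{x}$, which preserves positivity by the quantum Schur product theorem. Three bookkeeping facts drive everything: $a\ast\widehat{x}^{(\ast k)}=\Phi^{k}(a)$ by associativity of convolution; the Jones projection $e_1$ is the convolution unit, so $p\ast(e_1+\widehat{x})^{(\ast(d-1))}=(\mathrm{id}+\Phi)^{d-1}(p)=\sum_{k=0}^{d-1}\binom{d-1}{k}\Phi^{k}(p)$; and, since $B=\mathcal{R}\big(\sum_{k\geq0}\widehat{x}^{(\ast k)}\big)$ stabilizes in finite dimensions while a positive element and its range projection differ only by a two-sided scalar bound, $\mathcal{R}(p\ast B)=\bigvee_{k}\mathcal{R}(\Phi^{k}(p))$. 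With these in hand, $(3)\Leftrightarrow(4)$ is faithfulness of the trace: writing $Q=\overline{p_1}\otimes p_2$ one has $tr_{2,-}(BQ)=tr_{2,-}(BQB)$ with $BQB\geq0$, so $tr_{2,-}(BQ)=0\Leftrightarrow BQB=0\Leftrightarrow BQ=0$. Then $(4)\Leftrightarrow(5)$ follows from the planar Frobenius-reciprocity identity $\langle p_1\ast B,\,p_2\rangle=tr_{2,-}(B(\overline{p_1}\otimes p_2))$ together with the fact that a positive $z\in\mathscr{P}_{1,+}$ has $\mathcal{R}(z)=I$ iff it pairs nontrivially with every minimal projection; the reduction from arbitrary nonzero $p$ to minimal $p$ uses $p\geq p_1\Rightarrow p\ast B\geq p_1\ast B$.

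The heart of the argument is $(5)\Leftrightarrow(6)\Leftrightarrow(7)$, an Evans--H\o egh-Krohn-type Perron--Frobenius statement for $\Phi$: namely $\Phi$ is irreducible in the sense of Definition \ref{def:irreducible} iff $(\mathrm{id}+\Phi)^{d-1}(p)$ is strictly positive for every nonzero projection $p$. I would prove this by studying the increasing chain of support projections $q_m=\mathcal{R}\big((\mathrm{id}+\Phi)^{m}(p)\big)=\bigvee_{k=0}^{m}\mathcal{R}(\Phi^{k}(p))$. Because $(\mathrm{id}+\Phi)^{m}(p)$ has full support $q_m$, a coincidence $q_m=q_{m+1}$ propagates to all later indices, so the chain strictly increases until it reaches a stable value $q$ with $\mathcal{R}(\Phi(q))\leq q$, i.e.\ $\Phi(q)\leq\lambda q$; irreducibility then forces $q\in\{0,I\}$, whence $q=I$ since $p\neq0$. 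A maximal chain of projections in the $d$-dimensional algebra $\mathscr{P}_{1,+}$ admits at most $d-1$ strict increases starting from a nonzero projection, so $q_{d-1}=I$, which is exactly strict positivity of $(\mathrm{id}+\Phi)^{d-1}(p)$; the converse is the contrapositive, as an invariant projection $p\notin\{0,I\}$ gives $\mathcal{R}(\Phi^{k}(p))\leq p$ for all $k$ and hence $(\mathrm{id}+\Phi)^{d-1}(p)\leq c\,p\not>0$. Feeding in $\mathcal{R}(p\ast B)=\bigvee_{k}\mathcal{R}(\Phi^{k}(p))=\mathcal{R}\big(p\ast(e_1+\widehat{x})^{(\ast(d-1))}\big)$ identifies $(5)$ with the condition $\bigvee_{k}\mathcal{R}(\Phi^{k}(p))=I$ and thus with $(7)$.

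Finally, $(7)\Leftrightarrow(8)$ is a reformulation of the same reachability statement: $\langle y_1\ast\widehat{x}^{(\ast k_0)},y_2\rangle=\langle\Phi^{k_0}(y_1),y_2\rangle>0$ for some $k_0<d$ says precisely that the support of $y_2$ lies under $\bigvee_{k<d}\mathcal{R}(\Phi^{k}(y_1))$, and reducing to minimal projections (with $\langle y_1,y_2\rangle=0$ eliminating $k_0=0$) matches $(7)$. For $(8)\Leftrightarrow(9)$ I would run the identical chain for the dual positive map $\Psi\colon\mathscr{P}_{1,-}\to\mathscr{P}_{1,-}$, $\Psi(b)=\widehat{x}\ast b$, which is the trace-adjoint of $\Phi$ by Frobenius reciprocity across the diagram; since a positive map is irreducible iff its adjoint is, the reachability underlying $(8)$ coincides with irreducibility of $\Psi$, which is exactly $(9)$. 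The main obstacle is the $(6)\Leftrightarrow(7)$ lemma---controlling the support chain $q_m$ and pinning down the dimension bound $d-1$---together with verifying the two purely planar identities (Frobenius reciprocity and the left--right adjointness of the pairing) that let the $C^{*}$-algebraic Perron--Frobenius argument be read off the diagrams.
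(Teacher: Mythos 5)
Your proposal is correct and follows essentially the same route as the paper's proof: the quantum Schur product theorem makes $a\mapsto a\ast\widehat{x}$ a positive map, the equivalences $(3)\Leftrightarrow(4)\Leftrightarrow(5)$ rest on trace faithfulness and the Frobenius-reciprocity identity $tr_{1,+}(p_1(p_2\ast B))=tr_{2,-}(B(\overline{p_1}\otimes p_2))$, the core $(5)\Leftrightarrow(6)\Leftrightarrow(7)$ is the same stabilizing support-chain argument with the dimension bound $d-1$, and $(9)$ is handled by the same contragredient/adjoint duality. The only differences are organizational (you hub the equivalences through $(7)$ rather than chaining $(5)\Rightarrow(6)\Rightarrow(7)\Rightarrow(8)\Rightarrow(6)$, and you prove $(1)\Rightarrow(2)$ directly rather than by the paper's linear-independence contradiction), which does not change the substance.
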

\begin{proof}
$(1)\Rightarrow (2)$
Suppose that $p_1, p_2$ are minimal projections in $\mathscr{P}_{1,+}$ such that $tr_{0}\left(\raisebox{-0.5cm}{
\begin{tikzpicture}
\draw [blue, fill=white] (-0.3,-0.3) rectangle (0.3,0.3);
\node at (0,0) {$p_1$};
\draw (0, 0.3) .. controls +(0, 0.3) and +(0, 0.3) .. (0.6, 0.3);
\draw (0, -0.3) .. controls +(0, -0.3) and +(0, -0.3) .. (0.6, -0.3);
\draw(0.6,0.3)--(0.6,-0.3);
\end{tikzpicture}}\raisebox{-0.5cm}{
\begin{tikzpicture}
\draw [blue, fill=white] (-0.3,-0.3) rectangle (0.3,0.3);
\node at (0,0) {$p_2$};
\draw (0, 0.3) .. controls +(0, 0.3) and +(0, 0.3) .. (0.6, 0.3);
\draw (0, -0.3) .. controls +(0, -0.3) and +(0, -0.3) .. (0.6, -0.3);
\draw(0.6,0.3)--(0.6,-0.3);
\end{tikzpicture}}\right)=0$.
Then
\begin{align*}
    \raisebox{-0.5cm}{
\begin{tikzpicture}
\draw [blue, fill=white] (-0.3,-0.3) rectangle (0.3,0.3);
\node at (0,0) {$p_1$};
\draw (0, 0.3) .. controls +(0, 0.3) and +(0, 0.3) .. (0.6, 0.3);
\draw (0, -0.3) .. controls +(0, -0.3) and +(0, -0.3) .. (0.6, -0.3);
\draw(0.6,0.3)--(0.6,-0.3);
\end{tikzpicture}}\neq 0,\quad  \raisebox{-0.5cm}{
\begin{tikzpicture}
\draw [blue, fill=white] (-0.3,-0.3) rectangle (0.3,0.3);
\node at (0,0) {$p_2$};
\draw (0, 0.3) .. controls +(0, 0.3) and +(0, 0.3) .. (0.6, 0.3);
\draw (0, -0.3) .. controls +(0, -0.3) and +(0, -0.3) .. (0.6, -0.3);
\draw(0.6,0.3)--(0.6,-0.3);
\end{tikzpicture}}\neq 0.
\end{align*}
and
\begin{align*}
\raisebox{-0.5cm}{
\begin{tikzpicture}
\draw [blue, fill=white] (-0.3,-0.3) rectangle (0.3,0.3);
\node at (0,0) {$p_1$};
\draw (0, 0.3) .. controls +(0, 0.3) and +(0, 0.3) .. (0.6, 0.3);
\draw (0, -0.3) .. controls +(0, -0.3) and +(0, -0.3) .. (0.6, -0.3);
\draw(0.6,0.3)--(0.6,-0.3);
\end{tikzpicture}}\raisebox{-0.5cm}{
\begin{tikzpicture}
\draw [blue, fill=white] (-0.3,-0.3) rectangle (0.3,0.3);
\node at (0,0) {$p_2$};
\draw (0, 0.3) .. controls +(0, 0.3) and +(0, 0.3) .. (0.6, 0.3);
\draw (0, -0.3) .. controls +(0, -0.3) and +(0, -0.3) .. (0.6, -0.3);
\draw(0.6,0.3)--(0.6,-0.3);
\end{tikzpicture}}=0.
\end{align*}
This implies that $\raisebox{-0.5cm}{
\begin{tikzpicture}
\draw [blue, fill=white] (-0.3,-0.3) rectangle (0.3,0.3);
\node at (0,0) {$p_1$};
\draw (0, 0.3) .. controls +(0, 0.3) and +(0, 0.3) .. (0.6, 0.3);
\draw (0, -0.3) .. controls +(0, -0.3) and +(0, -0.3) .. (0.6, -0.3);
\draw(0.6,0.3)--(0.6,-0.3);
\end{tikzpicture}},\raisebox{-0.5cm}{
\begin{tikzpicture}
\draw [blue, fill=white] (-0.3,-0.3) rectangle (0.3,0.3);
\node at (0,0) {$p_2$};
\draw (0, 0.3) .. controls +(0, 0.3) and +(0, 0.3) .. (0.6, 0.3);
\draw (0, -0.3) .. controls +(0, -0.3) and +(0, -0.3) .. (0.6, -0.3);
\draw(0.6,0.3)--(0.6,-0.3);
\end{tikzpicture}}$ are linearly independent in $\mathscr{P}_{0,+}$, which leads to the contradiction.

$(2)\Rightarrow (3)\Leftrightarrow (4)$ They are not hard to check.

$(4)\Rightarrow (5)$ Suppose that $p_1, p_2$ are minimal projections in $\mathscr{P}_{1,+}$ such that $p_2(p_1*B)=0$.
Then
\begin{align*}
tr_1(p_1(p_2*B))=tr_2(B(\overline{p_1}\otimes p_2))=0,
\end{align*}
which leads a contradiction.

$(5)\Rightarrow (4)$ Suppose that $p_1, p_2$ are minimal projections in $\mathscr{P}_{1,+}$ such that $tr_{2,-}(B( \overline{p_1}\otimes p_2))=0$.
Then $tr_{1,+}(p_1(p_2*B))=0$, i.e. $p_1(p_2*B)=0$.
We the see that $\mathcal{R}(p_2*B)\leq I-p_1$.
This leads a contradiction.
Hence $(4)$ is true.

$(5)\Rightarrow (6)$
Suppose there exists a nonzero projection $p\in\mathscr{P}_{1,+}$ such that
\begin{align*}
p*\widehat{x}\leq\lambda p
\end{align*}
 for some $\lambda>0$.
By the assumption, we see that there exists $n_0>0$ such that
\begin{align*}
 \sum_{k=0}^{n_0}\widehat{x}^{(*k)} \geq \lambda_1 B, \quad
p*B \geq \lambda_0 I
\end{align*}
for some $\lambda_0, \lambda_1>0$.
Hence
\begin{align*}
\left(\sum_{j=0}^{n_0} \lambda^j \right) p
\geq \sum_{j=0}^{n_0} p * \widehat{x}^{(*j)}
\geq \lambda_1 (p*B)
\geq \lambda_0 \lambda_1 I.
\end{align*}
Finally, we obtain that $p=I$ and $x$ is irreducible on $\mathscr{P}_{1,+}$.

$(6)\Rightarrow(7), (5)$
Suppose that $x$ is irreducible on $\mathscr{P}_{1,+}$.
Let $p$ be a nonzero projection in $\mathscr{P}_{1,+}$ and $y=p+p* \widehat{x} \in\mathscr{P}_{1,+}$.
Then $\mathcal{R}(y)= p \vee\mathcal{R}(p* \widehat{x})$.
If $\mathcal{R}(y)=p$, then $\mathcal{R}(p* \widehat{x}) =p$ and $p=I$ by the irreducibility of $x$ on $\mathscr{P}_{1,+}$.
If $\mathcal{R}(y)\neq p$, then we consider $p*\left(e_{1}+\widehat{x}\right)^{(* (k))}$ for $k \geq 2$ and its range space is increasing.
Note that the dimension of $\mathscr{P}_{1,+}$ is $d$.
 We have that
\begin{align*}
\mathcal{R}\left( p*\left(e_{1}+\widehat{x}\right)^{(* (d-1))}\right)=\mathcal{R}\left( p*\left(e_{1}+\widehat{x}\right)^{(* d)}\right)=I.
\end{align*}
Hence we see that $p*\left(e_{1}+\widehat{x}\right)^{(* (d-1))}>0$ and $\mathcal{R}(p*B)=I$ for any nonzero projection $p$ in $\mathscr{P}_{1,+}$.
Moreover, we have that $y*\left(e_{1}+\widehat{x}\right)^{(* (d-1))}>0$ for any nonzero positive element $y$ in $\mathscr{P}_{1,+}$.

(7)$\Rightarrow$(8)
Note that for any positive elements $y_1, y_2\in \mathscr{P}_{1,+}$ with $\langle y_1, y_2 \rangle =0$, we have
\begin{align*}
\left\langle y_1*\left(e_{1}+\widehat{x}\right)^{(* (d-1))} , y_2 \right\rangle  >0
\end{align*}
By the expansion, we see that there exists $k_0< d$ such that $ \left\langle y_1*\widehat{x}^{(*k_0)}, y_2 \right\rangle >0$.

(8)$\Rightarrow$(6)
Suppose $x$ is not irreducible on $\mathscr{P}_{1,+}$.
Then there exists a nonzero projection $I\neq p\in\mathscr{P}_{1,+}$ such that $p*\widehat{x}\leq\lambda p$ for some $\lambda>0$.
We see that for any $k\in \bN$, $\left\langle p*\widehat{x}^{(*k)},I-p\right\rangle=0$ which leads a contradiction.
Hence $x$ is irreducible on $\mathscr{P}_{1,+}$.

(9) $\Leftrightarrow$ (8) Since (6) $\Leftrightarrow$ (8), we have similarly that (9) is equivalent to the following statement: For any $z_1,z_2\in\mathscr{P}_{1,-}$,  with $\langle z_1, z_2 \rangle =0$, there exists $k_0\in \bN$, $k_0<d$, such that
\begin{align*}
     \left\langle z_1 ,\widehat{x}^{(*k_0)}\ast z_2\right\rangle>0.
\end{align*}
Note that $
  \left\langle z_1 ,\widehat{x}^{(*k_0)}\ast z_2\right\rangle = \left\langle \overline{z_1} \ast\widehat{x}^{(*k_0)} , \overline{z_2}\right\rangle.$ So this statement is equivalent to (8), 
  and (9) $\Leftrightarrow$ (8).
\end{proof}
\begin{remark}
The minimal projections in the statements (2), (3), (4) in Theorem \ref{thm:equivirr} can be
replaced by projections or positive elements in $\mathscr{P}_{1,+}$.
\end{remark}
\begin{remark}
The statement (1) in Theorem \ref{thm:equivirr} is stronger than the irreducibility. 
Let $\mathscr{P}$ be an irreducible subfactor planar algebra with a non-trivial (not indentity) biprojection $q\in \mathscr{P}_{2,-}$.
Take $x$ such that $\mathcal{R}(\widehat{x})\leq q$ in Theorem \ref{thm:equivirr}, then $x$ satisfies 
the statement (3) since $\mathscr{P}_{1,+}=\mathbb{C}I$, so $x$ is irreducible. On the other hand, the statement $(1)$ does not hold since $B\leq q<I$. 
\end{remark}
\begin{remark}
    For  spin model,  the  irreduciblity in Definition  \ref{def:irreducible} is the same is the irreduciblity for matrices.   Moreover, the statements (1) and (2) are equivalent to irreducibility.
\end{remark}
\begin{remark}
Suppose that $\mathscr{P}$ is an irreducible subfactor planar algebra, then  $\displaystyle B=\bigvee_{k=1}^\infty \mathcal{R}\left(\widehat{x}^{(*k)}\right)$ is a biprojection (See Theorem 4.12 in \cite{Liu16}).
\end{remark}

We obtain the uniqueness result under the irreducibility.
\begin{theorem}[\bf Uniqueness of PF eigenvector]\label{thm: unique PF}
Suppose $\mathscr{P}$ is a $C^*$-planar algebra, $x\in\mathscr{P}_{2,\pm}$ is $\mathfrak{F}$-positive and irreducible on $\mathscr{P}_{1,\pm}$.
Then there exists a unique (up to a scalar) strictly positive element $y\in\mathscr{P}_{1,\pm}$ such that
\begin{align}\label{eq:unique left}
    \raisebox{-0.9cm}{
\begin{tikzpicture}[scale=1.5]
\begin{scope}[shift={(-0.6, 0)}]
\draw [blue] (0,0) rectangle (0.35, 0.5);
\node at (0.175, 0.25) {$y$};
\end{scope}
\draw [blue] (0,0) rectangle (0.7, 0.5);
\node at (0.35, 0.25) {$\widehat{x}$};
\draw  (0.5, 0)--(0.5, -0.4) (0.5, 0.5)--(0.5, 0.9);
\draw (0.2, 0.5) .. controls +(0, 0.4) and +(0, 0.4) .. (-0.425, 0.5);
\draw (0.2, 0) .. controls +(0, -0.4) and +(0, -0.4) .. (-0.425, 0);
\end{tikzpicture}}
=r
\raisebox{-0.9cm}{
\begin{tikzpicture}[scale=1.5]
\begin{scope}[shift={(1.5, 0)}]
\draw [blue] (0,0) rectangle (0.35, 0.5);
\node at (0.175, 0.25) {$y$};
\draw  (0.175, 0)--(0.175, -0.4) (0.175, 0.5)--(0.175, 0.9);
\end{scope}
\end{tikzpicture}},
\end{align}
where $r$ is the spectral radius of $x$. Moreover, if there exists a (not necessary positive) element $y'\in \mathscr{P}_{1,\pm}$ such that $y'*\widehat{x}=ry'$, then $y'$ is a multiple of $y$.
\end{theorem}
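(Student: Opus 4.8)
The plan is to take existence for granted and reduce everything to two facts: that every nonzero positive eigenvector is automatically strictly positive, and that the $r$-eigenspace of the left convolution action is one-dimensional. By Proposition~\ref{prop:PF for planar algebra} together with Remark~\ref{rem:one box PF} there is already a nonzero positive $y\in\mathscr P_{1,\pm}$ with $y*\widehat x=ry$, which I would take as the candidate eigenvector; the two facts above then deliver strict positivity and uniqueness (even against non-positive competitors) simultaneously.

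\emph{Strict positivity.} Let $w\in\mathscr P_{1,\pm}$ be an arbitrary nonzero positive element with $w*\widehat x=rw$, and let $p=\mathcal R(w)$ be its support projection. Since $\mathscr P_{1,\pm}$ is a finite-dimensional $C^*$-algebra one has $\lambda_0 p\le w\le\lambda_1 p$ for scalars $\lambda_1\ge\lambda_0>0$, and the one-box convolution by the positive element $\widehat x$ is order preserving (a one-box form of the quantum Schur product theorem). Hence $\lambda_0\,(p*\widehat x)\le w*\widehat x=rw\le r\lambda_1 p$. If $r=0$ this already gives $p*\widehat x=0\le\lambda p$ for every $\lambda>0$, so irreducibility (Definition~\ref{def:irreducible}) forces $p=I$, whence $I*\widehat x=0$; as the partial-trace map $z\mapsto I*z$ is faithful on the positive cone this means $\widehat x=0$ and the theorem is vacuous. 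Thus I may assume $r>0$, and then $p*\widehat x\le(r\lambda_1/\lambda_0)\,p$, so irreducibility again yields $p=0$ or $p=I$; since $w\neq0$ we get $p=I$, i.e. $w$ is strictly positive.

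\emph{Uniqueness.} I first observe that the eigenspace $E=\{w:\ w*\widehat x=rw\}$ is closed under the adjoint: since $\widehat x$ is self-adjoint and $r$ is real, applying $*$ to $w*\widehat x=rw$ and using $(w*\widehat x)^*=w^**\widehat x^*$ with $\widehat x^*=\widehat x$ gives $w^*\in E$. Hence it suffices to show every self-adjoint $y'\in E$ is a real multiple of $y$. Fix such a $y'$ and consider the closed convex set $T=\{t\in\mathbb R:\ y-ty'\ge0\}$, which contains a neighborhood of $0$ because $y$ is strictly positive. If $y'\notin\mathbb R y$ then $T\neq\mathbb R$ (letting $t\to\pm\infty$ would force $y'=0$), so after replacing $y'$ by $-y'$ if necessary $T$ has a finite maximum $c$. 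Set $w=y-cy'\in E$; then $w\ge0$, and $w$ is \emph{not} strictly positive, since otherwise $w-\delta y'\ge0$ for small $\delta>0$ would contradict the maximality of $c$. But the strict-positivity step above shows that a nonzero positive element of $E$ must be strictly positive, so necessarily $w=0$, i.e. $y=cy'$ with $c\neq0$; thus $y'\in\mathbb R y$. This gives $\dim_{\mathbb C}E=1$, which yields uniqueness up to a scalar and, combined with the first step, shows the positive eigenvector is strictly positive.

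\emph{Expected obstacle.} The crucial step is the strict-positivity argument, where the definition of irreducibility must be fed the support projection together with the order-preservation of the one-box convolution; the sign-of-$r$ subtlety also has to be dispatched there. Once this is in place, uniqueness follows from the standard ``touching scalar'' argument, whose only nontrivial input is precisely that a nonzero positive eigenvector cannot live on a proper support projection.
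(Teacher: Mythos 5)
Your proposal is correct and follows essentially the same route as the paper: existence from Proposition \ref{prop:PF for planar algebra}, strict positivity by feeding the support projection (with $\lambda_0 p\le y\le\lambda_1 p$ and Schur-product order preservation) into the irreducibility definition, and uniqueness by the touching-scalar argument. The only cosmetic difference is that you merge the paper's two uniqueness steps (first comparing two strictly positive eigenvectors, then handling a general eigenvector $y'$ by passing to $y'+\lambda y>0$) into a single touching-scalar argument applied to self-adjoint eigenvectors, which is an equivalent reorganization.
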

\begin{proof}
 From Proposition \ref{prop:PF for planar algebra}, there exists a nonzero positive element $y\in\mathscr{P}_{1,\pm}$ such that $y\ast \widehat{x}=ry$. Thus $\mathcal{R}(y)\ast \widehat{x}\leq \lambda\mathcal{R}(y)$ for some $\lambda>0$. Since $x$ is irreducible on $\mathscr{P}_{1,\pm}$, so $\mathcal{R}(y)=I$ and $y$ is invertible.
If $y_1$ is another strictly positive element in $\mathscr{P}_{1,\pm}$ such that $y_1\ast \widehat{x}=ry_1$. We could take $y_2=y-ty_1\geq0$ for some $t>0$ such that $0<\mathcal{R}(y_2)<I$ by a continuity argument. Note that $y_2\ast \widehat{x}=ry_2$, so $\mathcal{R}(y_2)=I$, which leads to a contradiction.

Since $\widehat{x}$ is positive, we may assume that $y'$ is self-adjoint. There exists $\lambda>0$ such that $y'+\lambda y>0$.
Thus, $y'+\lambda y$ is a multiple of $y$.
Therefore, $y'$ is a multiple of $y$.
\end{proof}

\begin{corollary}
Let $\mathscr{P}$ be a  $C^*$-planar algebra. Suppose that $x\in\mathscr{P}_{2,\pm}$ is $\mathfrak{F}$-positive and  irreducible on $\mathscr{P}_{1,\pm}$.
If $\displaystyle \bigvee_{j=0}^\infty \mathcal{R}(\widehat{x}^{(*j)})=1$ and $\dim \mathscr{P}_{1,\pm}=1$,
then there exists a unique strictly positive element $y \in\mathscr{P}_{1,\pm}$ such that $y*\widehat{x}=ry$.
\end{corollary}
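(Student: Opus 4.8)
The plan is to deduce this corollary directly from Theorem \ref{thm: unique PF}, using the dimension hypothesis to collapse the Perron--Frobenius eigenspace onto $\mathbb{C}I$. First I would observe that the two stated hypotheses already place us in the strongest irreducibility regime of Theorem \ref{thm:equivirr}. Since the standard inclusion tangle (adding a through-string) embeds $\mathscr{P}_{0,\pm}$ unitally into $\mathscr{P}_{1,\pm}$, the condition $\dim\mathscr{P}_{1,\pm}=1$ forces $1\leq\dim\mathscr{P}_{0,\pm}\leq\dim\mathscr{P}_{1,\pm}=1$, hence $\dim\mathscr{P}_{0,\pm}=1$. Together with the hypothesis $B=\bigvee_{j\geq 0}\mathcal{R}(\widehat{x}^{(*j)})=1$, this is precisely statement (1) of Theorem \ref{thm:equivirr}, and the implication chain $(1)\Rightarrow(2)\Rightarrow\cdots\Rightarrow(6)$ recovers the irreducibility of $x$ on $\mathscr{P}_{1,\pm}$. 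In particular the explicit irreducibility assumption is automatic here, but in any event it licenses the use of the uniqueness theorem.

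Next I would apply Theorem \ref{thm: unique PF}: since $x$ is $\mathfrak{F}$-positive and irreducible on $\mathscr{P}_{1,\pm}$, there is a strictly positive $y\in\mathscr{P}_{1,\pm}$, unique up to a positive scalar, satisfying $y*\widehat{x}=ry$, where $r$ is the spectral radius of $x$. Because $\dim\mathscr{P}_{1,\pm}=1$ we have $\mathscr{P}_{1,\pm}=\mathbb{C}I$, so every strictly positive element is of the form $\lambda I$ with $\lambda>0$; in particular $y=\lambda I$, and after normalizing we may take $y=I$, for which strict positivity is automatic. To pin down the eigenvalue I would note that the linear map $z\mapsto z*\widehat{x}$ acts on the one-dimensional space $\mathbb{C}I$ as multiplication by a single scalar $c$, i.e. $I*\widehat{x}=cI$, and the strictly positive eigenvector produced above identifies $c=r$. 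Thus the eigenvalue is forced to equal the spectral radius and the eigenvector is exactly $I$ up to a positive scalar.

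I do not expect a genuine obstacle: the corollary is essentially a specialization of Theorem \ref{thm: unique PF} to the case where the ambient $1$-box algebra is already one-dimensional. The only point requiring care is the meaning of \emph{unique}, which, exactly as in Theorem \ref{thm: unique PF}, I read as unique up to a positive scalar; equivalently, the PF eigenspace inside $\mathscr{P}_{1,\pm}$ is the single ray $\mathbb{R}_{>0}\,I$. The one small verification worth stating explicitly is the inclusion $\dim\mathscr{P}_{1,\pm}=1\Rightarrow\dim\mathscr{P}_{0,\pm}=1$ used to land in hypothesis (1) of Theorem \ref{thm:equivirr}, which is immediate from the capping and inclusion tangles already introduced in \S\ref{subsec:planar algebra}.
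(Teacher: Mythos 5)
Your proposal is correct and follows essentially the same route as the paper, whose entire proof is the one-line citation of Theorem \ref{thm:equivirr} and Theorem \ref{thm: unique PF}; you simply fill in the details (deducing $\dim\mathscr{P}_{0,\pm}=1$ from $\dim\mathscr{P}_{1,\pm}=1$ to land in statement (1) of Theorem \ref{thm:equivirr}, then invoking the uniqueness theorem). Your added observations — that irreducibility is automatic when $\mathscr{P}_{1,\pm}=\mathbb{C}I$ and that the eigenvector is the ray through $I$ — are accurate and consistent with the paper's intent.
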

\begin{proof}
It follows from Theorems \ref{thm:equivirr} and \ref{thm: unique PF}.
\end{proof}
We now consider the space of PF eigenvectors.
\begin{notation}\label{notation:eigenspace}
Suppose $\mathscr{P}$ is a $C^*$-planar algebra and $x\in\mathscr{P}_{2,\pm}$ is $\mathfrak{F}$-positive.
Let
\begin{align*}
\mathcal{E}=\{y\in \mathscr{P}_{1,\pm}: y*\widehat{x}=ry\} \subseteq \mathscr{P}_{1,\pm}
\end{align*}
be the eigenspace of $r$ for $x$ in $\mathscr{P}_{1,\pm}$.
We denote by $\mathcal{E}^+$ the set of all positive elements in $\mathcal{E}$ and $(\mathcal{E}^+)_1$ the set of all trace-one elements in $\mathcal{E}^+$. Proposition \ref{prop:PF for planar algebra} indicates that 
$(\mathcal{E}^+)_1$ is a nonempty compact convex set.
Let
\begin{align*}
\mathscr{Q}=\{\mathcal{R}(y):\ y\in \mathcal{E}^+ \}
\end{align*}
be the set of all range projections of positive eigenvectors.
For any $p,q\in\mathscr{Q}$, there exists $y_1,y_2\in\mathcal{E}^+$ such that $\mathcal{R}(y_1)=p$
and $\mathcal{R}(y_2)=1$. So $\mathcal{R}(y_1+y_2)=p\vee q$ and $p\vee q\in\mathscr{Q}$.
Let $p_{\max}$ be the maximal projection in $\mathscr{Q}$. Then $p_{\max}=\bigvee_{p\in\mathscr{Q}}p$.
\end{notation}

\begin{proposition}\label{prop: minimal projection and extremal point 1} 
Suppose $\mathscr{P}$ is a $C^*$-planar algebra and $x\in\mathscr{P}_{2,\pm}$ is $\mathfrak{F}$-positive.
Then  $y\in(\mathcal{E}^+)_1$ is an extreme point
if and only if $\mathcal{R}(y)$ is a minimal projection in
$\mathscr{Q}$. Moreover, if there exist $y_1,y_2 \in (\mathcal{E}^+)_1$ such that $y_1$ is an extreme point and $\mathcal{R}(y_1)=\mathcal{R}(y_2)$, then $y_1=y_2$.
\end{proposition}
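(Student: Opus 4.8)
The plan is to reduce everything to a single ``subtraction to the boundary'' argument inside the finite-dimensional $C^*$-algebra $\mathscr{P}_{1,\pm}$, using only that $\mathcal{E}$ is a linear subspace, that $\mathcal{E}^+=\mathcal{E}\cap\mathscr{P}_{1,\pm}^+$ is a convex cone, and the standard facts that $0\le a\le b$ implies $\mathcal{R}(a)\le\mathcal{R}(b)$ and $\mathcal{R}(a+b)=\mathcal{R}(a)\vee\mathcal{R}(b)$ for positive $a,b$. Since the trace $\mathrm{tr}$ on $\mathscr{P}_{1,\pm}$ is faithful, a point $y\in(\mathcal{E}^+)_1$ is an extreme point of the compact base exactly when $\mathbb{R}_{\geq 0}\,y$ is an extreme ray of the cone $\mathcal{E}^+$, and I will pass freely between these two formulations.

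First I would isolate the key lemma: for a minimal $p\in\mathscr{Q}$ there is a unique $y\in(\mathcal{E}^+)_1$ with $\mathcal{R}(y)=p$. Given two such elements $y_1,y_2$, set $t=\sup\{s\ge0:\ y_1-s\,y_2\ge0\}$; this is finite (trace considerations) and positive (since $y_1$ is invertible in the corner $p\,\mathscr{P}_{1,\pm}\,p$ while $y_2\le\|y_2\|\,p$). Then $z:=y_1-t\,y_2$ lies in $\mathcal{E}$ by linearity and in $\mathscr{P}_{1,\pm}^+$ by construction, so $z\in\mathcal{E}^+$, and since $z\in p\,\mathscr{P}_{1,\pm}\,p$ we have $\mathcal{R}(z)\le p$. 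Maximality of $t$ forces $z$ to be non-invertible in that corner, i.e. $\mathcal{R}(z)\lneq p$; as $\mathcal{R}(z)\in\mathscr{Q}$ and $p$ is minimal, $\mathcal{R}(z)=0$, whence $y_1=t\,y_2$, and comparing traces gives $t=1$ and $y_1=y_2$.

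Next, the equivalence. For ``$y$ extreme $\Rightarrow\mathcal{R}(y)$ minimal'' I would argue by contraposition: if $p=\mathcal{R}(y)$ is not minimal, choose $y'\in\mathcal{E}^+$ with $0\ne\mathcal{R}(y')\lneq p$, run the same subtraction $z=y-t\,y'$ with $t$ maximal, and obtain $y=z+t\,y'$; here $z\ne0$ (otherwise $y\propto y'$ would force $\mathcal{R}(y)=\mathcal{R}(y')\lneq p$), so this is a sum of two nonzero positive eigenvectors whose range projections differ, and normalizing each by its positive trace exhibits $y$ as a nontrivial convex combination of two distinct points of $(\mathcal{E}^+)_1$, contradicting extremality. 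For the converse, if $\mathcal{R}(y)=p$ is minimal and $y=\lambda y_1+(1-\lambda)y_2$ with $y_i\in(\mathcal{E}^+)_1$ and $0<\lambda<1$, then $\lambda y_1\le y$ and $(1-\lambda)y_2\le y$ give $\mathcal{R}(y_i)\le p$, minimality forces $\mathcal{R}(y_1)=\mathcal{R}(y_2)=p$, and the uniqueness lemma yields $y_1=y_2=y$, so $y$ is extreme. The ``moreover'' statement is then immediate: if $y_1$ is extreme, the forward implication makes $\mathcal{R}(y_1)$ minimal, so $y_2$ is a trace-one positive eigenvector sharing this minimal range projection, and the uniqueness lemma gives $y_1=y_2$.

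The only genuinely delicate point, and the one I would write out in full, is the claim that the maximal $t$ produces a strict drop in the range projection, i.e. that $z$ fails to be invertible in the corner $p\,\mathscr{P}_{1,\pm}\,p$; this is where finite-dimensionality enters, both to guarantee that the supremum defining $t$ is attained and because invertibility in the corner is an open condition there. It is precisely this step that converts the analytic statement ``$z$ lies on the boundary of the positive cone'' into the combinatorial statement $\mathcal{R}(z)\lneq p$ that interacts with minimality in $\mathscr{Q}$.
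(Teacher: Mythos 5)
Your proof is correct and rests on the same core mechanism as the paper's: subtract a maximal multiple of one positive eigenvector from another so that the result stays in $\mathcal{E}^+$ but has strictly smaller range projection, then play this against minimality in $\mathscr{Q}$. The only difference is organizational --- you isolate the uniqueness-for-minimal-support statement as a lemma and derive both directions plus the ``moreover'' clause from it, whereas the paper runs the two directions directly --- but the argument is essentially identical.
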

\begin{proof}
Suppose   $y$ is an extremal point. If $\mathcal{R}(y)$ is not minimal, then there exists $z\in (\mathcal{E}^+)_1$ such that $\mathcal{R}(z)<\mathcal{R}(y)$. Take $\omega=y-tz\geq0$ for some $0<t<1$. Then $y=(1-t)\dfrac{\omega}{1-t}+t z$, which is a contradiction.

On the other hand, if $\mathcal{R}(y)$ is minimal but $y$ is not extremal. Then $y$ can be expressed as a convex combination of two different elements, i.e., $y=\lambda y'+(1-\lambda)y''$, $0<\lambda<1$,  $y',y''\in (\mathcal{E}^+)_1$. It is easy to see $\mathcal{R}(y')\leq\mathcal{R}(y)$. Thus $\mathcal{R}(y')=\mathcal{R}(y)$ from the minimality of $\mathcal{R}(y)$. We can choose $0<t<1$ such that $y-ty'\geq0$ and $0<\mathcal{R}(y-ty')<\mathcal{R}(y)$ by a continuity argument. Note that $\dfrac{y-ty'}{1-t}\in(\mathcal{E}^+)_1$. So we find another eigenvector with a smaller support, which leads to a contradiction.
\end{proof}
In the end of this section, we present the Collatz-Wielandt formula for planar algebras.
\begin{theorem}[\bf Collatz-Wielandt Formula]\label{thm:cwF}
Suppose $\mathscr{P}$ is a $C^*$-planar algebra, $x\in\mathscr{P}_{2,\pm}$ is $\mathfrak{F}$-positive. Let $r$ be the spectral radius of $x$.
We have the following statements are true:
\begin{enumerate}
\item If there exists a non-zero positive element $z \in\sP_{1, \mp}$ such that
\[ \widehat{x}*z \leq r z \quad \text{and}\quad \mathcal{R}(z)\leq \overline{p_{\max}},\]
then $ \widehat{x}*z = r z$.
\item  If there exists a non-zero positive element $z \in\sP_{1, \mp}$ such that
\[  r z\leq \widehat{x}*z  \quad \text{and}\quad \mathcal{R}(\widehat{x}*z )\leq\overline{p_{\max}},\]
then $ \widehat{x}*z = r z$.
\item If there exists a non-zero positive element $z\in\sP_{1, \mp}$ such that $ \widehat{x}*z = \widetilde{r} z$ and $\mathcal{R}(z)\leq \overline{p_{\max}}$, then $\widetilde{r}=r$.
\end{enumerate}
\end{theorem}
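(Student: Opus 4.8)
The plan is to reduce all three statements to pairing against a single, maximally supported positive left eigenvector and to exploit the adjointness of the two convolution actions. First I would produce that eigenvector. By Notation~\ref{notation:eigenspace} the set $\mathscr{Q}$ is closed under $\vee$, and since $\mathscr{P}_{1,\pm}$ is finite-dimensional the join $p_{\max}=\bigvee_{p\in\mathscr{Q}}p$ is attained by a finite join $p_1\vee\cdots\vee p_m$. Choosing $w_i\in\mathcal{E}^+$ with $\mathcal{R}(w_i)=p_i$ and setting $w=w_1+\cdots+w_m$ gives $w\in\mathcal{E}^+$ with $\mathcal{R}(w)=p_{\max}$ and $w*\widehat{x}=rw$; in particular $w\geq\varepsilon\, p_{\max}$ for some $\varepsilon>0$, so $w$ is strictly positive on its support.

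The one structural input I would isolate is the adjointness of the trace pairing $\langle\,\cdot\,,\,\cdot\,\rangle$ between $\mathscr{P}_{1,\pm}$ and $\mathscr{P}_{1,\mp}$: for $a\in\mathscr{P}_{1,\pm}$ and $b\in\mathscr{P}_{1,\mp}$,
\[
\langle a*\widehat{x},\,b\rangle=\langle a,\,\widehat{x}*b\rangle,
\]
which holds by planar isotopy, both sides being the single closed diagram with $a$, $\widehat{x}$ and $b$ composed in sequence. I would record two positivity facts as well: the pairing is positive and monotone ($a,b\geq0\Rightarrow\langle a,b\rangle\geq0$), and for a projection $p\in\mathscr{P}_{1,\pm}$ the functional $\langle p,\,\cdot\,\rangle$ restricted to the corner $\overline{p}\,\mathscr{P}_{1,\mp}\,\overline{p}$ is a positive multiple of $tr_{1,\mp}$. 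Together these give the faithfulness I need: whenever $w'\geq0$ with $\mathcal{R}(w')\leq\overline{p_{\max}}$ and $\langle w,w'\rangle=0$, monotonicity yields $0=\langle w,w'\rangle\geq\varepsilon\langle p_{\max},w'\rangle = \varepsilon c\,tr_{1,\mp}(w')\geq0$, so $tr_{1,\mp}(w')=0$ and hence $w'=0$ by faithfulness of the trace.

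With these in hand the three conclusions are short, and it is cleanest to read off (3) first. Pairing $w$ against $z$ and using adjointness on both sides gives $r\langle w,z\rangle=\langle w*\widehat{x},z\rangle=\langle w,\widehat{x}*z\rangle=\widetilde{r}\langle w,z\rangle$; since $z\geq0$, $z\neq0$ and $\mathcal{R}(z)\leq\overline{p_{\max}}$, the faithfulness computation shows $\langle w,z\rangle>0$, forcing $\widetilde{r}=r$. For (1), set $w'=rz-\widehat{x}*z\geq0$; then $\langle w,w'\rangle=r\langle w,z\rangle-\langle w,\widehat{x}*z\rangle=r\langle w,z\rangle-\langle w*\widehat{x},z\rangle=0$, while $0\leq w'\leq rz$ gives $\mathcal{R}(w')\leq\mathcal{R}(z)\leq\overline{p_{\max}}$, so $w'=0$, i.e. $\widehat{x}*z=rz$. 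Statement (2) is identical with $w''=\widehat{x}*z-rz\geq0$: here $\langle w,w''\rangle=0$ by the same computation, and $0\leq w''\leq\widehat{x}*z$ together with $\mathcal{R}(\widehat{x}*z)\leq\overline{p_{\max}}$ gives $\mathcal{R}(w'')\leq\overline{p_{\max}}$, whence $w''=0$.

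The main obstacle is the pairing bookkeeping rather than any of the three deductions: I must verify that the trace pairing between $\mathscr{P}_{1,\pm}$ and $\mathscr{P}_{1,\mp}$ is positive and monotone, and that the $180^\circ$ rotation $\overline{\,\cdot\,}$ matches the support $p_{\max}$ with $\overline{p_{\max}}$ so that $\langle p_{\max},\,\cdot\,\rangle$ collapses to (a multiple of) $tr_{1,\mp}$ on the right corner. Once the adjointness identity — a planar isotopy — and the existence of a maximally supported eigenvector $w$ are in place, the rest is formal.
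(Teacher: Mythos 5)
Your proposal is correct and follows essentially the same route as the paper: both pair the nonnegative defect $rz-\widehat{x}*z$ against a left PF eigenvector supported on $p_{\max}$ and conclude by faithfulness of the trace on the corner under $\overline{p_{\max}}$; your adjointness identity $\langle a*\widehat{x},b\rangle=\langle a,\widehat{x}*b\rangle$ is the same planar isotopy the paper expresses via associativity of convolution together with the quantum Schur product theorem. The only cosmetic difference is that you prove (3) directly from $\langle w,z\rangle>0$ rather than deducing it from (1).
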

\begin{proof}
$(1)$ Let $y\in\mathscr{P}_{1,\pm}$ be the positive element such that $y*\widehat{x}=ry$ and $\mathcal{R}(y)=p_{\max}$.
Then we have
\begin{align*}
ry*z = (y*\widehat{x})*z=y*(\widehat{x}*z) \leq  r y*z.
\end{align*}
The last inequality is the quantum Schur product theorem.
Hence $y*(rz-\widehat{x}*z) =0$ and $p_{\max}*(rz-\widehat{x}*z) =0$.
We obtain that
\begin{align*}
\raisebox{-0.8cm}{\begin{tikzpicture}[scale=1.2]
\draw [blue, fill=white] (-0.85,0) rectangle (0.75, 0.5);
\node at (-0.05, 0.25) {$rz-\widehat{x}*z$};
\draw (-0.05, 0.5) .. controls +(0, 0.5) and +(0, 0.5) .. (-1.45, 0.5)--(-1.45, 0).. controls +(0, -0.5) and +(0, -0.5) ..(-0.05, 0);
\begin{scope}[shift={(-1.8, 0)}]
    \draw [blue,fill=white] (0,0) rectangle (0.7, 0.5);
\node at (0.35, 0.25) {$p_{\max}$};
\end{scope}
\end{tikzpicture}}=0.
\end{align*}
Hence $tr_{1,\mp}(rz-\widehat{x}*z)$=0. By the faithful of the trace, we obtain $\widehat{x}*z=rz$.
(2) The argument is similar to (1).

$(3)$ Note that $\widetilde{r}\in \sigma(x)$ and $\widetilde{r}\leq r$.
By the statement $(1)$, we see that $\widetilde{r}=r$.
\end{proof}

\begin{corollary}
Suppose $\mathscr{P}$ is a $C^*$-planar algebra, $x\in\mathscr{P}_{2,\pm}$ is $\mathfrak{F}$-positive and irreducible. Let $r$ be the spectral radius of $x$.
We have the following statements are true:
\begin{enumerate}
\item If there exists a non-zero positive element $z \in\sP_{1, \mp}$ such that
$\widehat{x}*z \leq r z$ or $\widehat{x}*z \geq rz$,
then $ \widehat{x}*z = r z$.
\item If there exists a non-zero positive element $z\in\sP_{1, \mp}$ such that $ \widehat{x}*z = \widetilde{r} z$, then $\widetilde{r}=r$.
\end{enumerate}
\end{corollary}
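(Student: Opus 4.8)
The plan is to derive this corollary directly from the Collatz-Wielandt Formula (Theorem \ref{thm:cwF}) by observing that irreducibility forces the range hypotheses appearing there to become vacuous. The key point is that, under irreducibility, the maximal range projection $p_{\max}$ of Notation \ref{notation:eigenspace} equals the identity $I$.

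First I would invoke Theorem \ref{thm: unique PF}: since $x$ is $\mathfrak{F}$-positive and irreducible on $\mathscr{P}_{1,\pm}$, there is a unique (up to a scalar) strictly positive element $y\in\mathscr{P}_{1,\pm}$ with $y*\widehat{x}=ry$. Strict positivity means $y$ is invertible, hence $\mathcal{R}(y)=I$. Since $y\in\mathcal{E}^+$ and $I$ is the largest projection in $\mathscr{P}_{1,\pm}$, this yields $I\in\mathscr{Q}$ and therefore $p_{\max}=\bigvee_{p\in\mathscr{Q}}p=I$. Consequently $\overline{p_{\max}}=\overline{I}=I$ in $\mathscr{P}_{1,\mp}$, the $180^\circ$ rotation sending the identity $1$-box to the identity $1$-box of the opposite shading.

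With $\overline{p_{\max}}=I$, each constraint of the form $\mathcal{R}(\cdot)\leq\overline{p_{\max}}$ in Theorem \ref{thm:cwF} holds automatically for every $z\in\mathscr{P}_{1,\mp}$. Thus part (1) of the corollary is immediate: if $\widehat{x}*z\leq rz$ then Theorem \ref{thm:cwF}(1) gives $\widehat{x}*z=rz$, while if $\widehat{x}*z\geq rz$ then $\mathcal{R}(\widehat{x}*z)\leq I=\overline{p_{\max}}$ and Theorem \ref{thm:cwF}(2) gives the same conclusion. Part (2) follows in exactly the same manner from Theorem \ref{thm:cwF}(3), since the hypothesis $\mathcal{R}(z)\leq\overline{p_{\max}}$ is automatic.

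I expect no genuine obstacle here; the content is entirely a specialization of the already-proven Collatz-Wielandt Formula. The single point demanding care is the identification $p_{\max}=I$ under irreducibility, together with the verification $\overline{p_{\max}}=I$, both of which are transparent from Theorem \ref{thm: unique PF} and the graphical calculus for the rotation.
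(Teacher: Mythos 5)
Your proposal is correct and matches the paper's own argument: the paper likewise invokes Theorem \ref{thm: unique PF} to produce a strictly positive PF eigenvector, concludes $p_{\max}=I$, and then applies Theorem \ref{thm:cwF} with the range hypotheses rendered vacuous. Your additional remarks on $\overline{p_{\max}}=I$ and on which clause of the Collatz-Wielandt formula handles each inequality are just a more explicit spelling-out of the same reasoning.
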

\begin{proof}
By Theorem \ref{thm: unique PF}, there exists a unique strictly positive element $y\in\mathscr{P}_{1,\pm}$ such that $y*\widehat{x}=ry$. So we have $p_{\max}=I$. Then by Theorem \ref{thm:cwF}, we could obtain the conclusion.
\end{proof}
\begin{remark}
 Evans and H{\o}egh-Krohn proved the Collatz-Wielandt formula for irreducible positive maps on finite-dimensional $C^*$-algebras (See Theorems 2.3 and 2.4  in  \cite{ChoEff77}). We give the  Collatz-Wielandt formula on
 a weaker condition for $x$ in Theorem \ref{thm:cwF}.
\end{remark}
\begin{remark}
    Suppose  $\mathscr{P}_{1,\pm}$ is abelian and $\{p_i\}_{i=1}^n$ are orthgonal minimal projections with $\sum_{i=1}^n p_i=I$, and $x\in\mathscr{P}_{2,\pm}$ is $\mathfrak{F}$-positive.
    We have another form of the Collatz-Wielandt formula: Let $z\in \mathscr{P}_{1,\mp}$ be a strictly positive element. Then
    \begin{align}\label{eq:CWF abelian}
    \min_{1\leq i\leq n}\frac{tr_{1,\mp}(\overline{p_i}[\widehat{x}\ast z])}{tr_{1,\mp}(\overline{p_i} z)}\leq   
    r(x)\leq \max_{1\leq i\leq n}\frac{tr_{1,\mp}(\overline{p_i}[\widehat{x}\ast z])}{tr_{1,\mp}(\overline{p_i} z)}.
    \end{align}
This could recover the Collatz-Wielandt formula for (not-necessarily irreducible) positive matrices. 
Suppose $\Gamma$ is a bipartite graph with simple edges. Let $\mathscr{P}(\Gamma)$ be the
 bipartite graph $C^*$-planar algebra of $\Gamma$. Then $\mathscr{P}(\Gamma)_{1,\pm}$ is abelian and Inequality \eqref{eq:CWF abelian} holds.
\end{remark}

\section{PF Theorem on Ordered Banach Spaces}\label{sec:PFT for Banach space}
For a  positive map on an (infinite-dimensional) ordered Banach space, there exists an 
approximate PF eigenvector with respective to the spectral radius. The space of  PF eigenvectors is at most one-dimensional if the positive map has ergodicity property. We give a self-contained proof of these results.
\begin{definition}\label{def:ordered banach space}
An \textbf{ordered Banach space} $\mathcal{X}$ is a Banach space with a closed positive cone $\mathcal{C}$  such that $\mathcal{C}\cap -\mathcal{C}=0$ and $\mathcal{C}+\mathcal{C}\subseteq \mathcal{C}$.
For any $x,y\in \mathcal{X}$, we define the order relation $x\leq y$ if $y-x\in\mathcal{C}$. 
Moreover, the positive cone $\mathcal{C}$ also satisfies the following properties:
\begin{enumerate}
\item If $0\leq x\leq y$, then $\|x\|\leq c_{\mathcal{X}}\|y\|$, where $c_{\mathcal{X}}$ is a positive constant independent of $x,y$;
\item Each $x\in \mathcal{X}$ is a linear sum of four positive elements with norm bounded by $d_{\mathcal{X}}\|x\|$, where $d_{\mathcal{X}}$ is a positive constant independent of $x$.
\end{enumerate}
\end{definition}
\begin{remark}
Property (1) is usually called normality. Property (2) holds if only if $\mathcal{C}$ is a generating cone, i.e., $\mathcal{X}=\mathcal{C}-\mathcal{C}$; see for instance \cite[Theorem 2.37 (1) and (3)]{AT07}. 
\end{remark}

\begin{remark}
    Suppose  $\mathcal{M}$ is a von Neumann algebra with a normal semifinite faithful weight. Let $L^p(\mathcal{M})$ be the noncommutative spatial  $L^p$ space \cite{Hil81}, $1\leq p\leq \infty$. Define $\mathcal{C}:=\{x\in L^p(\mathcal{M}): x\geq 0\}$. Then $(L^p(\mathcal{M}),\mathcal{C})$ is an ordered Banach space.
\end{remark}

\begin{definition}\label{def:positive map}
A bounded linear map $T$ between two ordered Banach spaces $(\mathcal{X}_1,\mathcal{C}_1)$ and  $(\mathcal{X}_2,\mathcal{C}_2)$ is \textbf{positive} if $T\mathcal{C}_1\subseteq\mathcal{C}_2$. 
\end{definition}

\begin{lemma}\label{lem:bound on the cone}
Suppose $T$ is a linear map from the ordered Banach space $(\mathcal{X},\mathcal{C})$
to a Banach space. 
We have that  $T$ is bounded if and only if $\displaystyle \sup_{x\in\mathcal{C},x\neq0}\dfrac{\|Tx\|}{\|x\|}<\infty$, i.e., $T$ is bounded on the cone $\mathcal{C}$.
\end{lemma}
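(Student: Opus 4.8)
The plan is to prove the two implications separately, with essentially all the content residing in the ``if'' direction, which will rely on Property (2) of Definition \ref{def:ordered banach space} (the generating-cone property).

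The ``only if'' direction is immediate. If $T$ is bounded with norm $\|T\|$, then for every nonzero $x\in\mathcal{C}\subseteq\mathcal{X}$ we have $\|Tx\|\leq \|T\|\,\|x\|$, so that
\[
\sup_{x\in\mathcal{C},\,x\neq 0}\frac{\|Tx\|}{\|x\|}\leq \|T\|<\infty,
\]
which is exactly the stated boundedness on the cone.

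For the ``if'' direction, I would set $M:=\sup_{x\in\mathcal{C},\,x\neq0}\|Tx\|/\|x\|<\infty$, so that $\|Tx\|\leq M\|x\|$ holds for every $x\in\mathcal{C}$. The key step is to invoke Property (2) of Definition \ref{def:ordered banach space}: an arbitrary $x\in\mathcal{X}$ decomposes as a linear combination $x=x_1-x_2+i(x_3-x_4)$ of four positive elements $x_1,x_2,x_3,x_4\in\mathcal{C}$, each satisfying $\|x_j\|\leq d_{\mathcal{X}}\|x\|$. Applying $T$, the triangle inequality, and the cone bound to each summand then yields
\[
\|Tx\|\leq \sum_{j=1}^4 \|Tx_j\|\leq M\sum_{j=1}^4 \|x_j\|\leq 4M d_{\mathcal{X}}\|x\|,
\]
so $T$ is bounded with $\|T\|\leq 4M d_{\mathcal{X}}$, finishing the argument.

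I do not expect a genuine obstacle here; the proof is short. The only point requiring care is the precise reading of Property (2): one must use that the norm control $d_{\mathcal{X}}\|x\|$ applies to the four positive summands themselves (the form in which the property is stated), so that no separate estimate on the scalar coefficients $\{1,-1,i,-i\}$ is needed and the factor $4$ together with $d_{\mathcal{X}}$ combines cleanly into the final operator-norm bound. It is worth emphasizing that normality (Property (1)) is not used at all — only the generating-cone Property (2) enters.
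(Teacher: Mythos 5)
Your proof is correct and follows exactly the route the paper intends: the paper's own proof is the single line ``use condition (2) of Definition \ref{def:ordered banach space},'' and you have simply written out that argument in full (trivial ``only if'' direction, then the four-positive-element decomposition plus the triangle inequality for the ``if'' direction, yielding $\|T\|\leq 4Md_{\mathcal{X}}$). Your remark that only the generating-cone property, and not normality, is needed is also accurate.
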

\begin{proof}
    It could be proved by using the condition (2) in Definition \ref{def:ordered banach space}. 
\end{proof}

\begin{lemma}\label{lem:converge}
Let $(\mathcal{X},\mathcal{C})$ be an ordered Banach space. 
Suppose $\{x_n\}_{n=1}^\infty\subseteq\mathcal{C}$ is a sequence of positive elements such that $\displaystyle \sum_{n=1}^\infty x_n$ converges. Then for any  sequence of complex numbers
$\{w_n\}_{n=1}^\infty$ with modulus uniformly  bounded by a positive number $c$, we have $\displaystyle \sum_{n=1}^\infty w_n x_n$ converges and 
\begin{align*}
\left\|\sum_{n=1}^\infty w_n x_n \right\|\leq4cc_{\mathcal{X}} \left\|\sum_{n=1}^\infty x_n \right\|.
\end{align*}
\end{lemma}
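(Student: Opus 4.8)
The plan is to reduce everything to the case of non-negative real coefficients and then exploit the normality Property (1) of Definition \ref{def:ordered banach space}. First I would decompose each complex coefficient as $w_n = (a_n - b_n) + i(e_n - f_n)$, where $a_n, b_n$ are the positive and negative parts of $\mathrm{Re}(w_n)$ and $e_n, f_n$ those of $\mathrm{Im}(w_n)$. Since $|w_n| \leq c$, each of these four real sequences takes values in $[0,c]$. By linearity, $\sum_n w_n x_n = \sum_n a_n x_n - \sum_n b_n x_n + i\sum_n e_n x_n - i\sum_n f_n x_n$, so it suffices to prove convergence and an estimate for a single series $\sum_n t_n x_n$ with $0 \leq t_n \leq c$, and then recombine.

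For such a series the key point is that the partial sums are order-sandwiched. Because each $x_n \in \mathcal{C}$ and $\mathcal{C}$ is a closed cone stable under addition, every tail $\sum_{n>N} x_n$ (which exists since $\sum_n x_n$ converges) is a norm-limit of elements of $\mathcal{C}$, hence lies in $\mathcal{C}$; thus the tail is non-negative and, for any $N<M$,
\[
0 \leq \sum_{n=N+1}^M t_n x_n \leq c\sum_{n=N+1}^\infty x_n .
\]
Normality then yields $\bigl\|\sum_{n=N+1}^M t_n x_n\bigr\| \leq c\,c_{\mathcal{X}} \bigl\|\sum_{n=N+1}^\infty x_n\bigr\|$. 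Since the tails of the convergent series $\sum_n x_n$ tend to $0$ in norm, the partial sums of $\sum_n t_n x_n$ form a Cauchy sequence and the series converges. Taking $N=0$, letting $M\to\infty$, and using continuity of the norm in the same estimate gives $\bigl\|\sum_n t_n x_n\bigr\| \leq c\,c_{\mathcal{X}} \bigl\|\sum_n x_n\bigr\|$.

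Applying this bound to each of the four series built from $a_n, b_n, e_n, f_n$ and combining by the triangle inequality produces convergence of $\sum_n w_n x_n$ together with the stated estimate $\bigl\|\sum_n w_n x_n\bigr\| \leq 4c\,c_{\mathcal{X}}\bigl\|\sum_n x_n\bigr\|$, the factor $4$ being exactly the number of pieces in the decomposition. This argument is essentially routine; the only point requiring care rather than ingenuity is to compare each partial sum $\sum_{n=1}^M t_n x_n$ \emph{directly} with the full sum $c\sum_{n=1}^\infty x_n$, not with the truncation $c\sum_{n=1}^M x_n$. Routing the comparison through the truncation would force a second appeal to normality and weaken the constant to $4c\,c_{\mathcal{X}}^2$. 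The other delicate ingredient is the tail inequality $\sum_{n>N} x_n \geq 0$, which is precisely where closedness of the cone $\mathcal{C}$ is used.
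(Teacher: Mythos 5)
Your proof is correct and follows essentially the same route as the paper: split $w_n$ into the positive and negative parts of its real and imaginary components, bound each of the four resulting series with nonnegative coefficients by $c\,c_{\mathcal{X}}\|\sum_n x_n\|$ via normality, and recombine by the triangle inequality to get the factor $4$. One small quibble: your closing claim that comparing a partial sum with the truncation $c\sum_{n=1}^{M} x_n$ would force a second appeal to normality and degrade the constant to $4c\,c_{\mathcal{X}}^2$ is mistaken --- the paper's own proof compares with $\|\sum_{n=N}^{M} x_n\|$ and recovers $4c\,c_{\mathcal{X}}$ simply by letting $M\to\infty$ and using continuity of the norm, so your detour through the infinite tails (and the attendant use of closedness of $\mathcal{C}$) is harmless but not needed.
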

\begin{proof}
For any $1\leq N<M$, we have 
\begin{align*}
\bigg\|\sum_{n=N}^M w_nx_n \bigg\|
=& \bigg\|\sum_{n=N}^M \Re(w_n)x_n+i\sum_{n=N}^M \Im(w_n)x_n\bigg\|\\
\leq & \bigg\|\sum_{n=N}^M \Re(w_n)x_n\bigg\|+\bigg\|\sum_{n=N}^M \Im(w_n)x_n\bigg\| \\
\leq & 4cc_{\mathcal{X}}\bigg\|\sum_{n=N}^M x_n\bigg\|.
\end{align*}
Thus $\displaystyle \sum_{n=1}^\infty w_n x_n$ converges. 
Let $N=1$, and $M\rightarrow\infty$, we have  
\begin{align*}
\left\|\sum_{n=1}^\infty w_n x_n \right\|\leq 4cc_{\mathcal{X}} \left\|\sum_{n=1}^\infty x_n\right\|.
\end{align*}
\end{proof}

\begin{theorem}[\bf Approximate Existence]\label{thm:existobs}
Let $T$ be a positive linear map on an ordered Banach spaces $(X,\mathcal{C})$. 
Then $r(T) \in \sigma(T)$ and there exists a sequence of positive elements $\displaystyle \{x_n\}_{n=1}^\infty\subseteq\mathcal{C}$ with $\|x_n\|=1$ such that
\begin{align*}
\lim_{n\rightarrow\infty}\|Tx_n-r(T)x_n \|=0,
\end{align*}
where $r(T)$ is the spectral radius of $T$.
\end{theorem}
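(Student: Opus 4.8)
The plan is to prove the two assertions in turn, reusing the architecture of the proof of Proposition \ref{prop:PF for planar algebra} for the spectral statement and then exploiting positivity of the resolvent to manufacture \emph{positive} approximate eigenvectors. Write $r := r(T)$; if $r=0$ the claim $r\in\sigma(T)$ is immediate since the spectrum of a bounded operator on a complex Banach space is nonempty, so assume $r>0$.

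First I would establish $r\in\sigma(T)$. For real $\lambda>r$ the Neumann series $R(\lambda,T)=\sum_{j=0}^\infty \lambda^{-j-1}T^j$ converges in norm, and since $T$ is positive each summand maps $\mathcal{C}$ into $\mathcal{C}$; as $\mathcal{C}$ is closed, $R(\lambda,T)$ is a positive operator. Arguing by contradiction, suppose $r\notin\sigma(T)$, so $R(\cdot,T)$ is analytic in a neighborhood of $r$. For $x\in\mathcal{C}$ and any positive functional $\phi$, the scalar series $\sum_j \lambda^{-j-1}\phi(T^j x)$ has non-negative terms and, letting $\lambda\to r^+$, converges to the finite value $\phi(R(r,T)x)$ by monotone convergence. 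Hence for every $\lambda$ with $|\lambda|=r$ the series $\sum_j \lambda^{-j-1}\phi(T^j x)$ converges absolutely, with modulus bounded by $\phi(R(r,T)x)$. Invoking the uniform boundedness principle (after decomposing a general element into four cone elements via Definition \ref{def:ordered banach space}(2) and a general functional into positive parts) shows the partial sums of $\sum_j \lambda^{-j-1}T^j$ are uniformly bounded for all $|\lambda|=r$, which forces $\{\lambda:|\lambda|=r\}\subseteq\mathbb{C}\setminus\sigma(T)$, contradicting that $r$ is the spectral radius. This is the step I expect to be the main obstacle, precisely because turning weak (functional-tested) boundedness on the critical circle into genuine existence of the resolvent there requires care; Lemma \ref{lem:converge} is the tool that passes from convergence of $\sum_j \lambda^{-j-1}T^j x$ at $\lambda=r$ to convergence for all $|\lambda|=r$.

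With $r\in\sigma(T)$ in hand, the construction is direct. Since $r=\max\{|\mu|:\mu\in\sigma(T)\}$, for real $\lambda>r$ we have $\mathrm{dist}(\lambda,\sigma(T))\leq \lambda-r$, whence $\|R(\lambda,T)\|\geq(\lambda-r)^{-1}\to\infty$ as $\lambda\to r^+$. Because $R(\lambda,T)$ is positive, Lemma \ref{lem:bound on the cone} (concretely, writing any unit vector as a combination of four cone elements of controlled norm) gives a constant $C$ with $\|R(\lambda,T)\|\leq C\sup_{z\in\mathcal{C},\,\|z\|=1}\|R(\lambda,T)z\|$, so the supremum over the cone also diverges.

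Finally I would choose $\lambda_n\downarrow r$ and positive unit vectors $z_n\in\mathcal{C}$ with $\|R(\lambda_n,T)z_n\|\to\infty$, and set
\[
x_n=\frac{R(\lambda_n,T)z_n}{\|R(\lambda_n,T)z_n\|}\in\mathcal{C},\qquad \|x_n\|=1.
\]
Applying $\lambda_n-T$ and using $(\lambda_n-T)R(\lambda_n,T)=I$ gives $(\lambda_n-T)x_n=z_n/\|R(\lambda_n,T)z_n\|$, so $\|(\lambda_n-T)x_n\|=\|R(\lambda_n,T)z_n\|^{-1}\to0$. Then
\[
\|Tx_n-rx_n\|\leq\|(\lambda_n-T)x_n\|+(\lambda_n-r)\|x_n\|\to0,
\]
which is the desired conclusion with $x_n\in\mathcal{C}$ and $\|x_n\|=1$.
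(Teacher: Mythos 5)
Your proposal is correct and follows essentially the same route as the paper: the same contradiction argument for $r(T)\in\sigma(T)$ (positivity of the Neumann-series coefficients plus Lemma \ref{lem:converge} pushing convergence from $\lambda=r$ to the whole circle $|\lambda|=r$, then Lemma \ref{lem:bound on the cone}), and the same construction of approximate eigenvectors by normalizing $R(\lambda_n,T)z_n$ as $\lambda_n\downarrow r$. Your version merely spells out more explicitly why $\|R(\lambda_n,T)\|\to\infty$ and why this unboundedness transfers to the cone, which the paper asserts tersely.
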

\begin{proof}
Let $r=r(T)$ and 
\begin{align*}
R(\lambda, T)=(\lambda -T)^{-1}, \quad  \lambda\in \mathbb{C}\backslash \sigma(T),
\end{align*}
be the resolvent of $T$.
Then $R(\lambda, T)$ is analytic in $\mathbb{C}\backslash \sigma(T)$.
For any $|\lambda| > r$, we have that
\begin{align*}
R(\lambda, T)=\sum_{j=0}^\infty \frac{T^j}{\lambda^{j+1}}.
\end{align*}
If $r\notin \sigma(T)$, then $R(r, T)=\displaystyle \lim_{\lambda\to r^+}R(\lambda, T)$ is bounded. 
For any $x\in\mathcal{C}$, we have 
\begin{align*}
R(r, T)(x)=\lim_{\lambda \to r^+}R(\lambda, T)(x)=\sum_{j=0}^\infty \frac{T^j(x)}{r^{j+1}} \in \mathcal{X}.
\end{align*}
By Lemma \ref{lem:converge}, we have $\displaystyle \sum_{j=0}^\infty \dfrac{T^j(x)}{w^{j+1}}$ converges for any $|w|=r$ and
\begin{align*}
\left\|\sum_{j=0}^\infty \frac{T^j(x)}{w^{j+1}}\right\|
\leq \frac{4c_{\mathcal{X}}}{r}\left \| \sum_{j=0}^\infty\frac{T^j(x)}{r^{j+1}}  \right\|
\leq\frac{4c_{\mathcal{X}}\|R(r, T)\|}{r}\|x\|.
\end{align*}
Then we see  that $\displaystyle R(w, T)=\lim_{\lambda \to w}R(\lambda, T)$ is bounded on the cone, hence is bounded by Lemma \ref{lem:bound on the cone}. Thus
 $w\notin \sigma(T)$, which leads to a contradiction. 
Hence $r\in \sigma(T)$.

Let 
\begin{align*}
T_n=\left(r+\frac{1}{n}-T\right)^{-1}, \quad n=1,2,\ldots.
\end{align*}
Then $\{T_n\}_{n\in\bN}$ is an unbounded sequence  of positive linear maps. 
Thus there exists a sequence of positive elements $y_n\in\mathcal{C}$ such that $\|T_n y_n\|=1$ and $\displaystyle \lim_{n\to\infty}y_n=0$. 
Now take $x_n=T_n y_n$, we have $\displaystyle \lim_{n\rightarrow\infty}\|Tx_n-rx_n \|=0$.
\end{proof}

\begin{remark}
Gl\"{u}ck and Mironchenko proved the approximate existence of PF eigenvector for a positive map on an ordered Banach space in \cite[Lemma 3.5]{GM21}.
\end{remark}

We now assume that there exits an element $I\in\mathcal{C}$ such that $x\leq e_\mathcal{X}\|x\|I$ for any $x\in\mathcal{C}$, where $e_\mathcal{X}$ is a positive constant independent of $x$. We call $x\in\mathcal{C}$ is strictly positive, written $x>0$, if there exits $\lambda>0$ such that $\lambda I\leq x$.

\begin{definition}
Let $(X,\mathcal{C})$ be an ordered Banach space. Let $T$ be a positive linear map on $X$. We say $T$ is \textbf{ergodic} if for any non-zero $x\in\mathcal{C}$, there exists $N>0$ such that $\sum_{j=1}^N T^j(x)$ is strictly positive.
\end{definition}

\begin{remark}
Let $\mathcal{X}^*$ be the dual Banach space of $\mathcal{X}$. Define the dual cone $\mathcal{C}^*$ of $\mathcal{C}$ in $\mathcal{X}^*$ by
\begin{align*}
   \mathcal{C}^*:=\{x^*\in\mathcal{X}^*:\ \langle x,x^*\rangle\geq0,\ \forall x\in\mathcal{C}\}, 
\end{align*}
where the inner product means the linear functional $x^*$ acting on $x$. From Gl\"{u}ck and Weber's characterizations of strictly positive elements in \cite[Proposition 2.11 and Corollary 2.8]{GW20}, where they called interior points, we know that $x\in\mathcal{C}$ is strictly positive if and only if
$\langle x,x^*\rangle>0$ for all $x^*\in\mathcal{C}^*$, $x^*\neq 0$. It follows that the the definition of ergodicity property is equivalent to Sawashima's definition of 
irreducibility: For every couple $x\in \mathcal{C},\ x\neq 0$, $x^*\in\mathcal{C},\ x^*\neq 0$, there exists a positive integer $n=n(x,x^*)$ such that $\langle T^nx,x^*\rangle>0$.
\end{remark}

\begin{theorem}[\bf Uniqueness]\label{thm:PF unique ordered Banach space}
Let $(X,\mathcal{C})$ be an ordered Banach space and $T$ is a positive linear map on $X$. If $T$ is ergodic on $X$, then there exists at most one $0\neq x\in\mathcal{C}$ such that $Tx=r(T)x$.
\end{theorem}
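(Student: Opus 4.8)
The plan is to prove that any two nonzero positive eigenvectors for the eigenvalue $r=r(T)$ are positive scalar multiples of one another, which is the sense in which the PF eigenvector is ``at most one.'' The argument has three stages: upgrade positivity to strict positivity via ergodicity, locate a maximal admissible shift between the two eigenvectors, and derive a contradiction unless they coincide up to scale.

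First I would show that every nonzero positive eigenvector is in fact strictly positive and that $r>0$. If $x\in\mathcal{C}$ is nonzero with $Tx=rx$, then $T^j x = r^j x$, so for every $N$ we have $\sum_{j=1}^N T^j x = \left(\sum_{j=1}^N r^j\right) x$. Ergodicity supplies an $N$ for which the left-hand side is strictly positive, hence nonzero; since $r\geqslant 0$ and $x\neq 0$, this forces $\sum_{j=1}^N r^j\neq 0$, i.e. $r>0$. Dividing by this positive scalar gives $x\geqslant \lambda I$ for some $\lambda>0$, so $x$ is strictly positive.

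Next, given two such eigenvectors $x_1,x_2$, now both strictly positive, I would run the standard maximal-shift argument. Set $t_0=\sup\{t>0:\ x_1-tx_2\in\mathcal{C}\}$. This set is nonempty: from $x_1\geqslant \lambda_1 I$ and $x_2\leqslant e_{\mathcal{X}}\|x_2\|I$ one gets $x_1-tx_2\geqslant (\lambda_1-t\,e_{\mathcal{X}}\|x_2\|)I\geqslant 0$ for small $t$. It is also bounded above: if $x_1-tx_2\in\mathcal{C}$ then $tx_2\leqslant x_1$, so normality (Property (1)) gives $t\|x_2\|\leqslant c_{\mathcal{X}}\|x_1\|$, whence $t_0<\infty$. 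By closedness of $\mathcal{C}$ the element $w:=x_1-t_0x_2$ lies in $\mathcal{C}$, and since $Tw=rx_1-t_0rx_2=rw$, it is again a positive eigenvector for $r$. If $w\neq 0$, the first stage makes it strictly positive, $w\geqslant\delta I$ for some $\delta>0$; then $x_1-(t_0+\epsilon)x_2=w-\epsilon x_2\geqslant(\delta-\epsilon\,e_{\mathcal{X}}\|x_2\|)I\geqslant 0$ for all small $\epsilon>0$, contradicting the maximality of $t_0$. Hence $w=0$ and $x_1=t_0x_2$.

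I expect the main obstacle to be the bookkeeping around the order unit $I$ and the constants $c_{\mathcal{X}},e_{\mathcal{X}}$: at each step one must check that the relevant element genuinely sits in (or on the boundary of) $\mathcal{C}$, using normality to bound $t_0$ and the order-unit inequality to create room to push past $t_0$. The conceptual heart is that ergodicity forces every positive eigenvector to be an interior (strictly positive) point of the cone, after which no nontrivial positive multiple of another eigenvector can be subtracted without leaving the cone, pinning the eigenray down uniquely.
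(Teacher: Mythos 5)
Your proof is correct and follows essentially the same route as the paper's: use ergodicity together with the eigenvector equation to show every nonzero positive eigenvector is strictly positive, then run the maximal-shift argument $t_0=\sup\{t: x_1-tx_2\geqslant 0\}$ and use strict positivity of the residual to contradict maximality. Your write-up is in fact slightly more careful than the paper's, since you explicitly verify $r>0$ and spell out the roles of the constants $c_{\mathcal{X}}$ and $e_{\mathcal{X}}$ in bounding the shift set and in pushing past $t_0$.
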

\begin{proof}
Suppose there exists $0\neq x\in\mathcal{C}$ such that $Tx=r(T)x$.
Since $T$ is ergodic,  $\displaystyle \sum_{j=1}^N T(x)$ is strictly positive, so $x>0$. 
Suppose there is $y\in\mathcal{C}$ such that $Ty=r(T)y$. 
We consider the following set
\begin{align*}
S:=\{t>0:\ x-ty>0\}.
\end{align*}
Since  $x\leq e_\mathcal{X}\|x\|I$ and  $y\leq e_\mathcal{X}\|y\|I$, 
then $S$ is a non-empty bounded open set. 
Take $t_0=\sup_{t\in S}t$, 
then $x-t_0 y$ is positive but is not strictly positive. 
Note that $T(x-t_0 y)=r(T)(x-t_0 y)$,
 if $x-t_0 y$ is non-zero then it is strictly positive, which is a contradiction. 
Therefore, $x-t_0 y=0$.
\end{proof}
\begin{remark}
Theorem \ref{thm:PF unique ordered Banach space} is another way to formulate the uniqueness result in Theorem 11.1(d) on page 111 in \cite{KLS89}.   
\end{remark}

\begin{remark}
      Krein and Rutman proved the PF theorem for positive compact linear maps on ordered Banach spaces (See e.g. Theorem 1.23 in \cite{BR04}). 
        We prove the PF theorem for positive (not-necessarily compact) linear maps on ordered Banach space with  two extra conditions for positive cones. They proved the uniqueness of PF eigenvector if  a compact linear map $T$ is strongly positive, which means that $T(\mathcal{C}\setminus\{0\})\subseteq \text{Int}(\mathcal{C})$. We prove the uniqueness if a 
        positive linear map $T$ is ergodic. The ergodicity of $T$ means that there exists $N>0$ such that $\sum_{j=1}^N T^j$ is strong positive. So the ergodic condition
        is weaker than the strong positivity.
\end{remark}

\section{Structures of the PF Eigenspace}\label{sec:structure}

In \S \ref{sec:PF theorem} we found that 
the support of the PF eigenvector of an irreducible $\mathfrak{F}$-positive element is identity. 
In this section, we study the eigenspace of PF eigenvectors of (non-irreducible) $\mathfrak{F}$-positive elements.

We show that an $\mathfrak{F}$-positive  element yields a power series that defines  a maximally-supported PF eigenvector $\zeta$. We obtain from this PF eigenvector the PF eigenspace $\mathcal{E}$  which has a $C^*$-algebraic structure.  We introduce exchange algebras $\mathcal{A}$ and $\mathcal{B}=\mathcal{A}^*$ and $\mathcal{C}=\mathcal{A}\cap \mathcal{B}$ for an $\mathfrak{F}$-positive element $x$. We prove that $\zeta$ commutes with $\mathcal{C}$ and is independent of $\mathcal{C}$ and $\zeta\mathcal{C}\subseteq\mathcal{E}$.  Let $\mathcal{C}_\zeta$ be the $C^*$-algebra $\mathcal{C}$ with respect to the $\mathfrak{F}$-positive element $(\zeta^{1/2}\otimes\overline{\zeta^{1/2}})x(\zeta^{-1/2}\otimes\overline{\zeta^{-1/2}})$. We prove that the space of positive PF eigenvectors $\mathcal{E}^+=\zeta^{1/2}\mathcal{C}_\zeta^+\zeta^{1/2}$ under a certain condition.
Let $\mathcal{A}_p$ and $\mathcal{B}_p$ be the exchange algebras $\mathcal{A}$ and $\mathcal{B}$
for the $\mathfrak{F}$-positive element $(p\otimes\overline{p})x(p\otimes\overline{p})$. We prove that $\mathcal{E}^+=(\mathcal{A}_p\zeta)^+=(\zeta\mathcal{B}_p)^+$ under the same condition.
These characterizations of PF eigenspaces are true for completely positive trace-preserving maps.
We elaborate the connection between this multiplicative structure and the multiplication of the $C^*$ algebra of density matrices. 
Based on it, we explore a PF theoretical approach to quantum error correction in the next section.

\subsection{The Maximal-Support PF Eigenvector}\label{subsec:maximal support}
Suppose $\mathscr{P}$ is a $C^*$-planar algebra and $x\in\mathscr{P}_{2,\pm}$ is $\mathfrak{F}$-positive with the spectral radius $r$. We recall that
 the eigenvector space of $x$ corresponding to  $r$ (the PF eigenspace) is denoted by
\begin{align*}
\mathcal{E}=\{y\in \mathscr{P}_{1,\pm}: y*\widehat{x}=ry\} \subseteq \mathscr{P}_{1,\pm}.
\end{align*}
Let $k$ be the order of the resolvent $(z-x)^{-1}$, $z\in\mathbb{C}$, at the pole $r$.
From  Equation \eqref{eq:perron operator} in the proof of Proposition \ref{prop:PF for planar algebra}, the following limit exists:
\begin{align}\label{eq:perron operator sec}
y:=\lim_{\mathbb{R}\ni z\to r^+}(z-r)^k\sum_{j=0}^\infty \dfrac{x^j }{z^{j+1}}.
\end{align}
Moreover, $y\in\mathscr{P}_{2,\pm}$ is a $\mathfrak{F}$-positive element such that  $xy=yx=ry$.
 Recall that
\begin{align*}
\mathscr{Q}=\{\mathcal{R}(y):\ y\in \mathcal{E}^+ \}
\end{align*}
is the set of all range projections of positive eigenvectors and $p=\bigvee_{q\in\mathscr{Q}} q$ is the maximal projection in $\mathscr{Q}$. (Here we use $p$ instead of $p_{\max}$ in Notation \ref{notation:eigenspace} for simplicity.) We define
\begin{align}\label{eq:cutting down positive definite}
\raisebox{-1.6cm}{
\begin{tikzpicture}[yscale=1.5, xscale=2.2]
\draw (0.35,- 0.9)--(0.35, 1.4);
\draw (0.15, -0.9)--(0.15, 1.4) ;
\draw [blue, fill=white] (0,0) rectangle (0.5, 0.5);
\node at (0.25, 0.25) {$x_p$};
\end{tikzpicture}}:=
\raisebox{-1.6cm}{
\begin{tikzpicture}[yscale=1.5, xscale=2.2]
\draw [blue, fill=white] (0,0) rectangle (0.5, 0.5);
\node at (0.25, 0.25) {$x$};
\draw (0.35, 0.5)--(0.5, 0.75)   (0.35, 0)--(0.5, -0.25);
\draw (0.15, 0.5)--(0, 0.75)  (0.15, 0)--(0, -0.25);
\begin{scope}[shift={(-0.2,0.75)}]
\draw [blue, fill=white] (0,0) rectangle (0.4, 0.4);
\node at (0.2, 0.2) {$p$};
\draw (0.2, 0.4)--(0.2, 0.65);
\end{scope}
\begin{scope}[shift={(0.3,0.75)}]
\draw [blue, fill=white] (0,0) rectangle (0.4, 0.4);
\node at (0.2, 0.2) {$\overline{p}$};
\draw (0.2, 0.4)--(0.2, 0.65);
\end{scope}
\begin{scope}[shift={(-0.2,-0.65)}]
\draw [blue, fill=white] (0,0) rectangle (0.4, 0.4);
\node at (0.2, 0.2) {$p$};
\draw (0.2,0)--(0.2,- 0.25);
\end{scope}
\begin{scope}[shift={(0.3,-0.65)}]
\draw [blue, fill=white] (0,0) rectangle (0.4, 0.4);
\node at (0.2, 0.2) {$\overline{p}$};
\draw (0.2,0)--(0.2,- 0.25);
\end{scope}
\end{tikzpicture}}.
\end{align}
We next prove that the spectral radius of $x_p$ is $r$ and
the order $k_p$  of the resolvent $(z-x_p)^{-1}$, $z\in\mathbb{C}$, at the pole $r$, is $1$.
\begin{lemma}
The following equation holds:
\begin{align}\label{eq:cut down2}
\raisebox{-1.6cm}{
\begin{tikzpicture}[yscale=1.5, xscale=2.2]
\draw [blue, fill=white] (0,0) rectangle (0.5, 0.5);
\node at (0.25, 0.25) {$\widehat{x}$};
\draw (0.35, 0.5)--(0.5, 0.75)   (0.35, 0)--(0.5, -0.25);
\draw (0.15, 0.5)--(0, 0.75)  (0.15, 0)--(0, -0.25);
\begin{scope}[shift={(-0.2,0.75)}]
\draw [blue, fill=white] (0,0) rectangle (0.4, 0.4);
\node at (0.2, 0.2) {$\overline{p}$};
\draw (0.2, 0.4)--(0.2, 0.65);
\end{scope}
\begin{scope}[shift={(0.3,0.75)}]
\draw [blue, fill=white] (0,0) rectangle (0.4, 0.4);
\node at (0.2, 0.2) {$p$};
\draw (0.2, 0.4)--(0.2, 0.65);
\end{scope}
\begin{scope}[shift={(-0.2,-0.65)}]
\draw [blue, fill=white] (0,0) rectangle (0.4, 0.4);
\node at (0.2, 0.2) {$\overline{p}$};
\draw (0.2,0)--(0.2,- 0.25);
\end{scope}
\begin{scope}[shift={(0.3,-0.65)}]
\draw [blue, fill=white] (0,0) rectangle (0.4, 0.4);
\node at (0.2, 0.2) {$p$};
\draw (0.2,0)--(0.2,- 0.25);
\end{scope}
\end{tikzpicture}}=
\raisebox{-1.6cm}{
\begin{tikzpicture}[yscale=1.5, xscale=2.2]
\draw [blue, fill=white] (0,0) rectangle (0.5, 0.5);
\node at (0.25, 0.25) {$\widehat{x}$};
\draw (0.35, 0.5)--(0.5, 0.75)   (0.35, 0)--(0.5, -0.25);
\draw (0.15, 0.5)--(0, 0.75)  (0.15, 0)--(0, -0.25);
\begin{scope}[shift={(-0.2,0.75)}]
\draw [blue, fill=white] (0,0) rectangle (0.4, 0.4);
\node at (0.2, 0.2) {$\overline{p}$};
\draw (0.2, 0.4)--(0.2, 0.65);
\end{scope}
\begin{scope}[shift={(0.3,0.75)}]
\draw (0.2, 0)--(0.2, 0.65);
\end{scope}
\begin{scope}[shift={(-0.2,-0.65)}]
\draw [blue, fill=white] (0,0) rectangle (0.4, 0.4);
\node at (0.2, 0.2) {$\overline{p}$};
\draw (0.2,0)--(0.2,- 0.25);
\end{scope}
\begin{scope}[shift={(0.3,-0.65)}]
\draw (0.2,0.4)--(0.2,- 0.25);
\end{scope}
\end{tikzpicture}}\;.
\end{align}    
\end{lemma}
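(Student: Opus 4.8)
The plan is to read the asserted identity algebraically inside the finite-dimensional $C^*$-algebra $\mathscr{P}_{2,\mp}$, where $\widehat{x}\geqslant 0$ because $x$ is $\mathfrak{F}$-positive (so in particular $\widehat{x}=\widehat{x}^{*}$). Writing $q=\overline{p}\otimes I$ and $q'=\overline{p}\otimes p$ for the two commuting tensor products of $1$-box projections, the left- and right-hand sides of \eqref{eq:cut down2} are exactly $q'\widehat{x}q'$ and $q\widehat{x}q$, so the claim is $q\widehat{x}q=q'\widehat{x}q'$. Setting $e=q-q'=\overline{p}\otimes(I-p)$, which is again a projection with $q=q'+e$, expanding gives
\[
q\widehat{x}q-q'\widehat{x}q'=q'\widehat{x}e+e\widehat{x}q' + e\widehat x e .
\]
Hence it suffices to prove the single relation $e\widehat{x}=0$: taking adjoints then yields $\widehat{x}e=0$, and all three terms above vanish. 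Moreover, since $\widehat{x}\geqslant0$, the relation $e\widehat{x}=0$ is equivalent to the compression identity $e\widehat{x}e=0$, because $e\widehat{x}e=(\widehat{x}^{1/2}e)^{*}(\widehat{x}^{1/2}e)=0$ forces $\widehat{x}^{1/2}e=0$ and hence $e\widehat{x}=0$.

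Next I would reduce $e\widehat{x}e=0$ to a trace computation. As $e\widehat{x}e\geqslant 0$ and the trace $tr_{2,\mp}$ is faithful, $e\widehat{x}e=0$ is equivalent to $tr_{2,\mp}(e\widehat{x}e)=tr_{2,\mp}(e\widehat{x})=0$. Closing up the diagram of $(\overline{p}\otimes(I-p))\,\widehat{x}$ — capping the two left legs of $\widehat{x}$ against $\overline{p}$ and the two right legs against $I-p$ — is, by the definitions of the convolution \eqref{eq:convolution1} and of the trace \eqref{eq:trace}, precisely the pairing of the $1$-box $p*\widehat{x}$ against $I-p$; that is, up to a positive normalization constant coming from closed loops,
\[
tr_{2,\mp}\big((\overline{p}\otimes(I-p))\,\widehat{x}\big)=\big\langle\, p*\widehat{x},\, I-p\,\big\rangle .
\]
Thus the lemma comes down to showing $\langle p*\widehat{x},I-p\rangle=0$, which (as $p*\widehat{x}\geqslant 0$ by the quantum Schur product theorem) is equivalent to $\mathcal{R}(p*\widehat{x})\leqslant p$.

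The crux is this last range estimate, and here the maximality of $p$ enters. Since $p=p_{\max}\in\mathscr{Q}$ there is a positive PF eigenvector $y\in\mathcal{E}^{+}$ with $\mathcal{R}(y)=p$, so in the finite-dimensional algebra $\mathscr{P}_{1,\pm}$ we may choose $\lambda>0$ with $\lambda p\leqslant y$. Convolution with the fixed positive element $\widehat{x}$ is bilinear and order-preserving by the quantum Schur product theorem, so $\lambda\,(p*\widehat{x})\leqslant y*\widehat{x}=ry\leqslant r\|y\|\,p$. A positive element dominated by a multiple of $p$ has range $\leqslant p$, giving $\mathcal{R}(p*\widehat{x})\leqslant p$ and hence $\langle p*\widehat{x},I-p\rangle=0$. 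Tracing back through the two reductions yields $e\widehat{x}=0$ and therefore $q\widehat{x}q=q'\widehat{x}q'$, which is \eqref{eq:cut down2}. I expect the main obstacle to lie in the second paragraph: carefully tracking the $180^{\circ}$ rotations and conjugations that turn the capping strings into $\overline{p}$, and identifying the closed diagram with the convolution pairing with the correct orientation and normalization. The algebra of the first paragraph and the order argument of the third are routine once $\widehat{x}\geqslant0$ and the maximal-support eigenvector $y$ are in hand.
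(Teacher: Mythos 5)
Your proof is correct and follows essentially the same route as the paper's: the paper likewise passes from $\mathcal{R}(y)=p_{\max}$ to $(I-p)(p*\widehat{x})(I-p)=0$ via the order comparison $\lambda p\leqslant y\leqslant\|y\|p$ and Schur-product monotonicity, and then uses positivity of $\widehat{x}$ to ``cut the diagram horizontally,'' i.e.\ your square-root step $\widehat{x}^{1/2}e=0$. The only (cosmetic) difference is that you detect the vanishing through the scalar trace $tr_{2,\mp}(e\widehat{x}e)$ whereas the paper uses the $1$-box partial closure; both rely on faithfulness of the trace on positive elements.
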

\begin{proof}
Let $y\in\mathscr{P}_{1,\pm}$ the PF eigenvector of $x$ with $\mathcal{R}(y)=p$. So $(I-p)(y\ast \widehat{x})(I-p)=0$, which is equivalent to $(I-p)(p\ast \widehat{x})(I-p)=0$.
This implies
   \begin{align*}
\raisebox{-1.6cm}{
\begin{tikzpicture}[yscale=1.5, xscale=2.2]
\draw [blue, fill=white] (0,0) rectangle (0.5, 0.5);
\node at (0.25, 0.25) {$\widehat{x}$};
\draw (0.35, 0.5)--(0.5, 0.75)   (0.35, 0)--(0.5, -0.25);
\draw (0.15, 0.5)--(0, 0.75)  (0.15, 0)--(0, -0.25);
\begin{scope}[shift={(-0.2,0.75)}]
\draw [blue, fill=white] (0,0) rectangle (0.4, 0.4);
\node at (0.2, 0.2) {$\overline{p}$};
\draw (0.2, 0.4)--(0.2, 0.65);
\end{scope}
\begin{scope}[shift={(0.3,0.75)}]
\draw [blue, fill=white] (0,0) rectangle (0.45, 0.4);
\node at (0.2, 0.2) {$I-p$};
\draw (0.2, 0.4)--(0.2, 0.65);
\end{scope}
\begin{scope}[shift={(-0.2,-0.65)}]
\draw [blue, fill=white] (0,0) rectangle (0.4, 0.4);
\node at (0.2, 0.2) {$\overline{p}$};
\draw (0.2,0)--(0.2,- 0.25);
\end{scope}
\begin{scope}[shift={(0.3,-0.65)}]
\draw [blue, fill=white] (0,0) rectangle (0.45, 0.4);
\node at (0.2, 0.2) {$I-p$};
\draw (0.2,0)--(0.2,- 0.25);
\end{scope}
\end{tikzpicture}}=0\;.
\end{align*} 
By cutting the above picture  horizontally, we have
$
    \raisebox{-1.2cm}{
\begin{tikzpicture}[yscale=1.5, xscale=2.2]
\draw [blue, fill=white] (0,0) rectangle (0.5, 0.5);
\node at (0.25, 0.25) {$\widehat{x}^{1/2}$};
\draw (0.35, 0.5)--(0.5, 0.75)   (0.35, 0)--(0.35, -0.35);
\draw (0.15, 0.5)--(0, 0.75)  (0.15, 0)--(0.15, -0.35);
\begin{scope}[shift={(-0.2,0.75)}]
\draw [blue, fill=white] (0,0) rectangle (0.4, 0.4);
\node at (0.2, 0.2) {$\overline{p}$};
\draw (0.2, 0.4)--(0.2, 0.65);
\end{scope}
\begin{scope}[shift={(0.3,0.75)}]
\draw [blue, fill=white] (0,0) rectangle (0.45, 0.4);
\node at (0.2, 0.2) {$I-p$};
\draw (0.2, 0.4)--(0.2, 0.65);
\end{scope}
\end{tikzpicture}}=0\;.
$
Thus
$
    \raisebox{-1.2cm}{
\begin{tikzpicture}[yscale=1.5, xscale=2.2]
\draw [blue, fill=white] (0,0) rectangle (0.5, 0.5);
\node at (0.25, 0.25) {$\widehat{x}^{1/2}$};
\draw (0.35, 0.5)--(0.5, 0.75)   (0.35, 0)--(0.35, -0.35);
\draw (0.15, 0.5)--(0, 0.75)  (0.15, 0)--(0.15, -0.35);
\begin{scope}[shift={(-0.2,0.75)}]
\draw [blue, fill=white] (0,0) rectangle (0.4, 0.4);
\node at (0.2, 0.2) {$\overline{p}$};
\draw (0.2, 0.4)--(0.2, 0.65);
\end{scope}
\begin{scope}[shift={(0.3,0.75)}]
\draw [blue, fill=white] (0,0) rectangle (0.4, 0.4);
\node at (0.2, 0.2) {$p$};
\draw (0.2, 0.4)--(0.2, 0.65);
\end{scope}
\end{tikzpicture}}=\raisebox{-1.2cm}{
\begin{tikzpicture}[yscale=1.5, xscale=2.2]
\draw [blue, fill=white] (0,0) rectangle (0.5, 0.5);
\node at (0.25, 0.25) {$\widehat{x}^{1/2}$};
\draw (0.35, 0.5)--(0.5, 0.75)   (0.35, 0)--(0.35, -0.35);
\draw (0.15, 0.5)--(0, 0.75)  (0.15, 0)--(0.15, -0.35);
\begin{scope}[shift={(-0.2,0.75)}]
\draw [blue, fill=white] (0,0) rectangle (0.4, 0.4);
\node at (0.2, 0.2) {$\overline{p}$};
\draw (0.2, 0.4)--(0.2, 0.65);
\end{scope}
\begin{scope}[shift={(0.3,0.75)}]
\draw (0.2, 0)--(0.2, 0.65);
\end{scope}
\end{tikzpicture}}\;.
$
This implies that Equation \eqref{eq:cut down2} holds.
\end{proof}
\begin{proposition}
    Suppose $\mathscr{P}$ is a $C^*$-planar algebra and $x\in\mathscr{P}_{2,\pm}$ is $\mathfrak{F}$-positive and $x_p$ is defined in Equation\eqref{eq:cutting down positive definite}. Then we have the following statements:
    \begin{enumerate}
        \item The spectral radius of $x_p$ is $r$ and $x_p$ is $\mathfrak{F}$-positive;
        \item Define
\begin{align*}
     \mathcal{E}_p:=\{y\in \mathscr{P}_{1,\pm}: y*\widehat{x_p}=ry\},
\end{align*}       
then $\mathcal{E}_p^+=\mathcal{E}^+$;
\item The following limit exists:
\begin{align}\label{eq:cut down perron operator sec}
y_p:=\lim_{\mathbb{R}\ni z\to r^+}(z-r)\sum_{j=0}^\infty \dfrac{x_p^j }{z^{j+1}}.
\end{align}
Moreover, $y_p$ is a nonzero $\mathfrak{F}$-positive element in $\mathscr{P}_{2,\pm}$ such that $x_py_p=y_px_p=ry_p$.
    \end{enumerate}
\end{proposition}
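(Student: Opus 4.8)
The plan is to treat the three claims in order, using the preceding Lemma, which identifies $\widehat{x_p}$ with the compression $(\overline p\otimes p)\,\widehat x\,(\overline p\otimes p)=(\overline p\otimes I)\,\widehat x\,(\overline p\otimes I)$ of $\widehat x$ by the projection $P:=\overline p\otimes p$ in $\mathscr P_{2,\mp}$, together with Proposition \ref{prop:PF for planar algebra} and the quantum Schur product theorem. The diagrammatic engine throughout is a single support-absorption fact: for a positive $1$-box whose range projection lies under $p$, convolution with $\widehat{x_p}$ agrees with convolution with $\widehat x$, and convolution with $\widehat{x_p}$ always produces a range under $p$.

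For (1), $\mathfrak F$-positivity is immediate, since $\widehat x\ge 0$ and $P$ is a projection give $\widehat{x_p}=P\widehat xP\ge 0$. For $r(x_p)=r$ I would prove two inequalities. Taking the positive eigenvector $y\in\mathscr P_{1,\pm}$ with $\mathcal R(y)=p$ and $y*\widehat x=ry$ provided by Notation \ref{notation:eigenspace}, the absorption fact gives $y*\widehat{x_p}=y*\widehat x=ry$, so $r\in\sigma(x_p)$ and $r\le r(x_p)$. Conversely, applying Proposition \ref{prop:PF for planar algebra} to the $\mathfrak F$-positive $x_p$ produces a nonzero positive $w$ with $w*\widehat{x_p}=r(x_p)w$; the factors $\overline p$ in $\widehat{x_p}$ force $\mathcal R(w)\le p$, whence $w*\widehat x=r(x_p)w$, so $r(x_p)$ is an eigenvalue of $x$ with a positive eigenvector and $r(x_p)\le r(x)=r$. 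Combining gives $r(x_p)=r$.

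Claim (2) then follows from the same absorption fact: if $y\in\mathcal E^+$ then $\mathcal R(y)\le p$ and $y*\widehat{x_p}=y*\widehat x=ry$, so $y\in\mathcal E_p^+$; conversely if $w\in\mathcal E_p^+$ then $\mathcal R(w)\le p$ and $w*\widehat x=w*\widehat{x_p}=rw$, using $r(x_p)=r$ from (1), so $w\in\mathcal E^+$. Hence $\mathcal E_p^+=\mathcal E^+$.

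The heart of the proposition is (3), that the pole of $(z-x_p)^{-1}$ at $r$ is simple. Since $\mathscr P_{2,\pm}$ is finite-dimensional and $r=r(x_p)\in\sigma(x_p)$, the order $k_p$ is a well-defined integer $\ge 1$, and Proposition \ref{prop:PF for planar algebra} already gives that the leading Laurent coefficient $u:=x_{p,(-k_p)}$ is a nonzero $\mathfrak F$-positive element with $x_pu=ux_p=ru$; so once $k_p=1$ is known, the limit \eqref{eq:cut down perron operator sec} is exactly $u$ and all remaining assertions hold. I would show $k_p=1$ by contradiction. The resolvent-identity computation in the proof of Proposition \ref{prop:PF for planar algebra} yields, for the Laurent coefficients of $(z-x_p)^{-1}$, the rule $x_{p,(k)}x_{p,(m)}=x_{p,(k+m+1)}$ for $k,m<0$ together with $x_{p,(j)}=0$ for $j<-k_p$; taking $k=m=-k_p$ gives $u^2=x_{p,(-2k_p+1)}$, and if $k_p\ge 2$ then $-2k_p+1<-k_p$, so $u^2=0$. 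Applying the Fourier transform converts this product into the convolution $\widehat u*\widehat u=0$, while $\widehat u\ge 0$ is nonzero; the faithfulness of the quantum Schur product (equivalently, closing up the convolution diagram against $\widehat u$ is a strictly positive number by reflection positivity of the trace) forces $\widehat u*\widehat u\neq 0$, a contradiction. Thus $k_p=1$. The main obstacle is precisely this last step, namely making the support-absorption identities and the faithfulness estimate rigorous within the planar-algebra convolution calculus; an alternative that avoids the trace estimate is to conjugate the positive map $y\mapsto y*\widehat{x_p}$ by the square root of the eigenvector $y$, which is strictly positive in the corner $p\,\mathscr P_{1,\pm}\,p$, obtaining a unital, hence power-bounded, positive map whose peripheral eigenvalue is automatically semisimple.
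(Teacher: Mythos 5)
Your parts (1) and (2) are sound and follow essentially the paper's route: everything reduces to the absorption identity $(\overline p\otimes p)\,\widehat x\,(\overline p\otimes p)=(\overline p\otimes I)\,\widehat x\,(\overline p\otimes I)$ from the preceding lemma, which gives $y*\widehat{x_p}=y*\widehat x$ whenever $\mathcal R(y)\le p$, and $\mathcal R(y*\widehat{x_p})\le p$ always.

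Part (3) has a genuine gap. The Laurent-coefficient computation $u^2=x_{p,(-2k_p+1)}=0$ for $k_p\ge 2$ is correct, but the step you use to reach a contradiction --- that a nonzero element with positive Fourier transform cannot square to zero, by ``faithfulness of the quantum Schur product'' --- is false. The Schur product theorem gives positivity of $\widehat u*\widehat u$, not faithfulness. In the spin model, $u=E_{12}$ is pointwise positive, so $\widehat u\ge 0$ and $u\neq 0$, yet $u^2=0$; equivalently, the completely positive map $D\mapsto E_{12}DE_{21}$ is nonzero but composes with itself to zero. Your parenthetical trace estimate fails for the same reason: $tr_{2,\pm}(\widehat u*\widehat u)$ pairs the two partial traces of $\widehat u$, which in this example are $\mathrm{diag}(1,0)$ and $\mathrm{diag}(0,1)$ and pair to zero. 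More tellingly, your argument for (3) never invokes the maximality of $p$, and without that hypothesis the conclusion is simply wrong: the pointwise-positive matrix $\bigl(\begin{smallmatrix}1&1\\0&1\end{smallmatrix}\bigr)$ has a pole of order $2$ at its spectral radius, with nilpotent leading coefficient $E_{12}$.

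The paper closes exactly this gap using maximality: since $\mathcal E_p^+=\mathcal E^+$ contains an eigenvector $z_p$ with $\mathcal R(z_p)=p$, one has $p\le\lambda z_p$ and hence $p*\widehat{x_p}^{(*j)}\le\lambda r^jz_p$ for all $j$; this uniform bound forces $(z-r)^{k_p}\sum_{j}p*\widehat{x_p}^{(*j)}/z^{j+1}\to 0$ when $k_p\ge 2$, contradicting $tr_{1,\pm}(p*\widehat{y_p})=tr_{2,\pm}(\widehat{y_p})>0$. Your alternative suggestion --- conjugating by the square root of the full-support eigenvector to obtain a unital, hence power-bounded, positive map on the corner --- does use maximality and is essentially the paper's estimate in operator-theoretic clothing, so it could be completed; but as stated it controls the pole of the resolvent of the convolution operator on $\mathscr P_{1,\pm}$, whereas $k_p$ is the pole order of the resolvent of the element $x_p$ in $\mathscr P_{2,\pm}$, and you would still need the bridge $tr_{1,\pm}(p*\widehat{y_p})=tr_{2,\pm}(\widehat{y_p})>0$ to transfer the conclusion from the capped operator back to the element.
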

\begin{proof}
    (1) It is clear that $\widehat{x_p}$ is positive. 
    Let $r(x_p)$ be the spectral radius of $x_p$ and $z_p\in \mathscr{P}_{1,\pm}$ be the PF eigenvector. 
    Then we have $r(x_p)z_p=\widehat{z_p}\ast\widehat{x_p}=\widehat{z_p}\ast\widehat{x}$. 
    The second equality is due to Equation \eqref{eq:cut down2}. So $r(x_p)\leq r$. 
    Similarly, let $z\in \mathscr{P}_{1,\pm}$ be the PF eigenvector of $x$. 
    Then $\mathcal{R}(z)\leq p$ and $rz=z\ast\widehat{x}=z\ast\widehat{x_p}$.
    We obtain $r\leq r(x_p)$.
    Thus $r(x_p)=r$.

    (2) It is easy to check.

    (3) Let $k_p$ be the order of the resolvent $(z-x_p)^{-1}$, $z\in\mathbb{C}$, at the pole $r$. We define
    \begin{align}
y_p:=\lim_{\mathbb{R}\ni z\to r^+}(z-r)^{k_p}\sum_{j=0}^\infty \dfrac{x_p^j }{z^{j+1}}.
\end{align}
From  Equation \eqref{eq:perron operator} in the proof of Proposition \ref{prop:PF for planar algebra}, we see  that $y_p\in\mathscr{P}_{2,\pm}$ is $\mathfrak{F}$-positive such that $x_py_p=y_px_p=ry_p$. We have 
\begin{align*}
    tr_{1,\pm}(p\ast \widehat{y_p})= 
\raisebox{-0.9cm}{
\begin{tikzpicture}[scale=1.5]
\begin{scope}[shift={(-0.6, 0)}]
\draw [blue] (0,0) rectangle (0.35, 0.5);
\node at (0.175, 0.25) {$p$};
\end{scope}
\draw [blue] (0,0) rectangle (0.7, 0.5);
\node at (0.35, 0.25) {$\widehat{y_p}$};
\draw (0.2, 0.5) .. controls +(0, 0.4) and +(0, 0.4) .. (-0.425, 0.5);
\draw (0.2, 0) .. controls +(0, -0.4) and +(0, -0.4) .. (-0.425, 0);
\draw (0.5, 0.5) .. controls +(0, 0.4) and +(0, 0.4) .. (1.125, 0.5);
\draw (0.5, 0) .. controls +(0, -0.4) and +(0, -0.4) .. (1.125, 0);
\draw (1.125, 0.5)--(1.125, 0);
\end{tikzpicture}}
=\raisebox{-0.9cm}{
\begin{tikzpicture}[scale=1.5]
\draw [blue] (0,0) rectangle (0.7, 0.5);
\node at (0.35, 0.25) {$\widehat{y_p}$};
\draw (0.2, 0.5) .. controls +(0, 0.4) and +(0, 0.4) .. (-0.425, 0.5);
\draw (0.2, 0) .. controls +(0, -0.4) and +(0, -0.4) .. (-0.425, 0);
\draw (0.5, 0.5) .. controls +(0, 0.4) and +(0, 0.4) .. (1.125, 0.5);
\draw (0.5, 0) .. controls +(0, -0.4) and +(0, -0.4) .. (1.125, 0);
\draw (1.125, 0.5)--(1.125, 0)(-0.425, 0.5)--(-0.425, 0);
\end{tikzpicture}}=tr_{2,\pm}(\widehat{y_p})>0,
\end{align*}
which implies $p\ast \widehat{y_p}\neq0$. Note that $p\ast \widehat{x_p}^{\ast j}\leq \lambda z_p\ast \widehat{x_p}^{\ast j}=\lambda r^j z_p$ for some $\lambda>0$, where $z_p$ is the PF eigenvector of $x_p$.
Now if $k_p\geq 2$, then
\begin{align*}
0\leq    p\ast  \widehat{y_p}\leq \lim_{\mathbb{R}\ni z\to r^+}(z-r)^{k_p}\sum_{j=0}^\infty \dfrac{\lambda r^j z_p}{z^{j+1}}=\lambda z_p  \lim_{\mathbb{R}\ni z\to r^+}(z-r)^{k_p-1}=0,
\end{align*}
which leads to a contradiction. So $k_p=1$ and Equation \eqref{eq:cut down perron operator sec} is well-defined.
\end{proof}
We define
\begin{align}\label{eq:maximal support eigenvector}
    \zeta:=p\ast \widehat{y_p}=\lim_{\mathbb{R}\ni z\to r^+}(z-r)\sum_{j=0}^\infty \dfrac{p\ast \widehat{x_p}^{\ast j} }{z^{j+1}}=\lim_{\mathbb{R}\ni z\to r^+}(z-r)\sum_{j=0}^\infty \dfrac{p\ast \widehat{x}^{\ast j} }{z^{j+1}}.
\end{align}
The last equality follows from Equation \eqref{eq:cut down2}.
Then 
\begin{align*}
  \zeta\ast \widehat{x} =\zeta\ast \widehat{x_p}=p\ast \widehat{y_p}\ast\widehat{x_p}=rp\ast \widehat{y_p}=r\zeta.
\end{align*}
This implies $\zeta\in \mathcal{E}^+$ and $\mathcal{R}(\zeta)=p$. So $\zeta$ is the PF eigenvector of $x$ with maximal support.

\subsection{C$^*$-Algebraic Structures of Eigenspaces}\label{subsec:structure}

Let $x\in\mathscr{P}_{2,\pm}$ be a $\mathfrak{F}$-positive element. We introduce three subalgebras  of $\mathscr{P}_{1,\pm}$ for $x$, which are given by the following graphically representations:
\begin{align}\label{eq:algebra A}
    \mathcal{A}:=\left\{y\in\mathscr{P}_{1,\pm}:\ 
    \raisebox{-1.1cm}{
\begin{tikzpicture}[scale=1.4]
\draw (0.2, -0.3)--(0.2, 1.25)   (0.5, -0.3)--(0.5, 1.25) ;
\draw [blue, fill=white] (0,0) rectangle (0.7, 0.5);
\node at (0.35, 0.25) {$x$};
\begin{scope}[shift={(0.35, 0.7)}]
\draw [blue, fill=white] (0,0) rectangle (0.35, 0.35);
\node at (0.175, 0.175) {$\overline{y}$};
\end{scope}
\end{tikzpicture}}
=
\raisebox{-0.9cm}{
\begin{tikzpicture}[scale=1.4]
\draw (0.2, -0.75)--(0.2, .8)   (0.5, -0.75)--(0.5, 0.8) ;
\draw [blue, fill=white] (0,0) rectangle (0.7, 0.5);
\node at (0.35, 0.25) {$x$};
\begin{scope}[shift={(0.35, -0.55)}]
\draw [blue, fill=white] (0,0) rectangle (0.35, 0.35);
\node at (0.175, 0.175) {$\overline{y}$};
\end{scope}
\end{tikzpicture}}
    \right\}=\left\{y\in\mathscr{P}_{1,\pm}:\ \raisebox{-1.1cm}{
\begin{tikzpicture}[scale=1.4]
\draw (0.2, -0.75)--(0.2, .8)   (0.5, -0.75)--(0.5, 0.8) ;
\draw [blue, fill=white] (0,0) rectangle (0.7, 0.5);
\node at (0.35, 0.25) { $\widehat{x}$};
\begin{scope}[shift={(0, -0.55)}]
\draw [blue, fill=white] (0,0) rectangle (0.35, 0.35);
\node at (0.175, 0.175) {$\overline{y}$};
\end{scope}
\end{tikzpicture}}
=
\raisebox{-1.1cm}{
\begin{tikzpicture}[scale=1.4]
\draw (0.2, -0.75)--(0.2, .8)    (0.5, -0.75)--(0.5, 0.8);
\draw [blue, fill=white] (0,0) rectangle (0.7, 0.5);
\node at (0.35, 0.25) { $\widehat{x}$};
\begin{scope}[shift={(0.35, -0.55)}]
\draw [blue, fill=white] (0,0) rectangle (0.35, 0.35);
\node at (0.175, 0.175) {$y$};
\end{scope}
\end{tikzpicture}}
\right\}\subseteq\mathscr{P}_{1,\pm}.
\end{align}
  \begin{align}\label{eq:algebra B}
    \mathcal{B}:=\left\{y\in\mathscr{P}_{1,\pm}:\ 
    \raisebox{-1.1cm}{
\begin{tikzpicture}[scale=1.4]
\draw (0.2, -0.3)--(0.2, 1.25)   (0.5, -0.3)--(0.5, 1.25) ;
\draw [blue, fill=white] (0,0) rectangle (0.7, 0.5);
\node at (0.35, 0.25) {$x$};
\begin{scope}[shift={(0, 0.7)}]
\draw [blue, fill=white] (0,0) rectangle (0.35, 0.35);
\node at (0.175, 0.175) {$y$};
\end{scope}
\end{tikzpicture}}
=
\raisebox{-0.9cm}{
\begin{tikzpicture}[scale=1.4]
\draw (0.2, -0.75)--(0.2, .8)   (0.5, -0.75)--(0.5, 0.8) ;
\draw [blue, fill=white] (0,0) rectangle (0.7, 0.5);
\node at (0.35, 0.25) {$x$};
\begin{scope}[shift={(0, -0.55)}]
\draw [blue, fill=white] (0,0) rectangle (0.35, 0.35);
\node at (0.175, 0.175) {$y$};
\end{scope}
\end{tikzpicture}}
    \right\}=\left\{y\in\mathscr{P}_{1,\pm}:\  \raisebox{-1.1cm}{
\begin{tikzpicture}[scale=1.4]
\draw (0.2, -0.3)--(0.2, 1.25)   (0.5, -0.3)--(0.5, 1.25) ;
\draw [blue, fill=white] (0,0) rectangle (0.7, 0.5);
\node at (0.35, 0.25) {$\widehat{x}$};
\begin{scope}[shift={(0.35, 0.7)}]
\draw [blue, fill=white] (0,0) rectangle (0.35, 0.35);
\node at (0.175, 0.175) {$y$};
\end{scope}
\end{tikzpicture}}
=
 \raisebox{-1.1cm}{
\begin{tikzpicture}[scale=1.4]
\draw (0.2, -0.3)--(0.2, 1.25)   (0.5, -0.3)--(0.5, 1.25) ;
\draw [blue, fill=white] (0,0) rectangle (0.7, 0.5);
\node at (0.35, 0.25) {$\widehat{x}$};
\begin{scope}[shift={(0, 0.7)}]
\draw [blue, fill=white] (0,0) rectangle (0.35, 0.35);
\node at (0.175, 0.175) {$\overline{y}$};
\end{scope}
\end{tikzpicture}}
\right\}\subseteq\mathscr{P}_{1,\pm}.
\end{align}
We have that $\mathcal{A}=\mathcal{B}^*$ since $x$ is $\mathfrak{F}$-positive.
The third algebra $\mathcal{C}$ is the intersection of $\mathcal{A}$ and $\mathcal{B}$, which is a $C^*$-algebra.
   \begin{align}\label{eq:algebra C}
    \mathcal{C}:=\left\{y\in\mathscr{P}_{1,\pm}:\ 
    \raisebox{-1.1cm}{
\begin{tikzpicture}[scale=1.4]
\draw (0.2, -0.3)--(0.2, 1.25)   (0.5, -0.3)--(0.5, 1.25) ;
\draw [blue, fill=white] (0,0) rectangle (0.7, 0.5);
\node at (0.35, 0.25) {$x$};
\begin{scope}[shift={(0.35, 0.7)}]
\draw [blue, fill=white] (0,0) rectangle (0.35, 0.35);
\node at (0.175, 0.175) {$\overline{y}$};
\end{scope}
\end{tikzpicture}}
=
\raisebox{-0.9cm}{
\begin{tikzpicture}[scale=1.4]
\draw (0.2, -0.75)--(0.2, .8)   (0.5, -0.75)--(0.5, 0.8) ;
\draw [blue, fill=white] (0,0) rectangle (0.7, 0.5);
\node at (0.35, 0.25) {$x$};
\begin{scope}[shift={(0.35, -0.55)}]
\draw [blue, fill=white] (0,0) rectangle (0.35, 0.35);
\node at (0.175, 0.175) {$\overline{y}$};
\end{scope}
\end{tikzpicture}},\  \raisebox{-1.1cm}{
\begin{tikzpicture}[scale=1.4]
\draw (0.2, -0.3)--(0.2, 1.25)   (0.5, -0.3)--(0.5, 1.25) ;
\draw [blue, fill=white] (0,0) rectangle (0.7, 0.5);
\node at (0.35, 0.25) {$x$};
\begin{scope}[shift={(0, 0.7)}]
\draw [blue, fill=white] (0,0) rectangle (0.35, 0.35);
\node at (0.175, 0.175) {$y$};
\end{scope}
\end{tikzpicture}}
=
\raisebox{-0.9cm}{
\begin{tikzpicture}[scale=1.4]
\draw (0.2, -0.75)--(0.2, .8)   (0.5, -0.75)--(0.5, 0.8) ;
\draw [blue, fill=white] (0,0) rectangle (0.7, 0.5);
\node at (0.35, 0.25) {$x$};
\begin{scope}[shift={(0, -0.55)}]
\draw [blue, fill=white] (0,0) rectangle (0.35, 0.35);
\node at (0.175, 0.175) {$y$};
\end{scope}
\end{tikzpicture}}
    \right\}\subseteq\mathscr{P}_{1,\pm}.
\end{align}
Equivalently, we have
\begin{align}
 \mathcal{C}:=\left\{y\in\mathscr{P}_{1,\pm}:\ \raisebox{-1.1cm}{
\begin{tikzpicture}[scale=1.4]
\draw (0.2, -0.75)--(0.2, .8)   (0.5, -0.75)--(0.5, 0.8) ;
\draw [blue, fill=white] (0,0) rectangle (0.7, 0.5);
\node at (0.35, 0.25) { $\widehat{x}$};
\begin{scope}[shift={(0, -0.55)}]
\draw [blue, fill=white] (0,0) rectangle (0.35, 0.35);
\node at (0.175, 0.175) {$\overline{y}$};
\end{scope}
\end{tikzpicture}}
=
\raisebox{-1.1cm}{
\begin{tikzpicture}[scale=1.4]
\draw (0.2, -0.75)--(0.2, .8)    (0.5, -0.75)--(0.5, 0.8);
\draw [blue, fill=white] (0,0) rectangle (0.7, 0.5);
\node at (0.35, 0.25) { $\widehat{x}$};
\begin{scope}[shift={(0.35, -0.55)}]
\draw [blue, fill=white] (0,0) rectangle (0.35, 0.35);
\node at (0.175, 0.175) {$y$};
\end{scope}
\end{tikzpicture}},\  \raisebox{-1.1cm}{
\begin{tikzpicture}[scale=1.4]
\draw (0.2, -0.3)--(0.2, 1.25)   (0.5, -0.3)--(0.5, 1.25) ;
\draw [blue, fill=white] (0,0) rectangle (0.7, 0.5);
\node at (0.35, 0.25) {$\widehat{x}$};
\begin{scope}[shift={(0.35, 0.7)}]
\draw [blue, fill=white] (0,0) rectangle (0.35, 0.35);
\node at (0.175, 0.175) {$y$};
\end{scope}
\end{tikzpicture}}
=
 \raisebox{-1.1cm}{
\begin{tikzpicture}[scale=1.4]
\draw (0.2, -0.3)--(0.2, 1.25)   (0.5, -0.3)--(0.5, 1.25) ;
\draw [blue, fill=white] (0,0) rectangle (0.7, 0.5);
\node at (0.35, 0.25) {$\widehat{x}$};
\begin{scope}[shift={(0, 0.7)}]
\draw [blue, fill=white] (0,0) rectangle (0.35, 0.35);
\node at (0.175, 0.175) {$\overline{y}$};
\end{scope}
\end{tikzpicture}}
\right\}\subseteq\mathscr{P}_{1,\pm}.
\end{align}

\begin{remark}\label{rem:three commutant}
    Let $\Phi$: $M_n(\mathbb{C})\to M_n(\mathbb{C})$ be a completely positive map. There are  Kraus operators $F_j\in \mathcal{M}$ such that
\[
\Phi(D)=\sum_{j=1}^m F_j D F_j^*, ~\quad\forall D \in M_n(\mathbb{C})\;.
\]
Then we have
\begin{align*}
    \mathcal{A}&=\{F_j^*:\ j=1,\ldots,m\}'\cap M_n(\mathbb{C}),\\
     \mathcal{B}&=\{F_j:\ j=1,\ldots,m\}'\cap M_n(\mathbb{C}),\\
      \mathcal{C}&=\{F_j,F_j^*:\ j=1,\ldots,m\}'\cap M_n(\mathbb{C}),
\end{align*}
where $\mathcal{S}'\cap M_n(\mathbb{C})=\{A\in M_n(\mathbb{C}):\ AB=BA,\ B\in \mathcal{S}  \}$ is the commutant of $\mathcal{S}$, $\mathcal{S}$ is a subset of $M_n(\mathbb{C})$. We could obtain the above three equations from Equation \eqref{eq:cptp Fourier} by choosing  $\{F_j\}_{j=1}^m$  as ${\rm Tr}(F_i^*F_j)=0$ if $i\neq j$.
\cite[Theorem 8.2]{NC10} assures that different choices of Kraus operators of $\Phi$ remain the commutants invariant.
\end{remark}

The $C^*$-algebra $\mathcal{C}$ is useful to characterize the eigenspace $\mathcal{E}$. 
Let $\zeta\in\mathcal{E}^+$ be the maximal-support eigenvector of the $\mathfrak{F}$-positive element $x$ defined in Equation \eqref{eq:maximal support eigenvector}. 
We will show that $\zeta\mathcal{C}\subseteq\mathcal{E}$ and $\zeta$ commutes with $\mathcal{C}$. 
Moreover, we will give a sufficient condition for $\zeta\mathcal{C}^+=\mathcal{E}^+$.

\begin{theorem}\label{thm:center}
Suppose $\mathscr{P}$ is a C$^*$-planar algebra and $x\in\mathscr{P}_{2,\pm}$ is $\mathfrak{F}$-positive. Then the following statements hold:
\begin{enumerate}
    \item $[p_{\max},y]=0$ for any $y\in \mathcal{C}$. Furthermore, $[\zeta,y]=0$ for any $y\in \mathcal{C}$.
    \item $\zeta \mathcal{C}\subseteq\mathcal{E}$.
    \item 
    For any direct summand $\mathcal{D}$ of $\mathcal{C}$ such that $\mathcal{D}\cong M_n(\mathbb{C})$, we have 
    $$\{\zeta\}''\mathcal{D}=\{p_\mathcal{D}\zeta\}''\mathcal{D}\cong \{p_\mathcal{D}\zeta\}''\otimes\mathcal{D},$$
    where $p_\mathcal{D}$ is the union of projections in $\mathcal{D}$. 
\end{enumerate}
\end{theorem}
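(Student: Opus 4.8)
The plan is to package everything around the single map $T\colon \mathscr{P}_{1,\pm}\to\mathscr{P}_{1,\pm}$, $T(y)=y*\widehat{x}$, whose $r$-eigenspace is exactly $\mathcal{E}$. The first step is to record the ``bimodule'' behaviour of $T$ over the exchange algebras, read off from the defining relations \eqref{eq:algebra A}--\eqref{eq:algebra C} (equivalently from Remark \ref{rem:three commutant}): for every $w\in\mathscr{P}_{1,\pm}$ one has $T(yw)=y\,T(w)$ when $y\in\mathcal{B}$ and $T(wy)=T(w)\,y$ when $y\in\mathcal{A}$. In matrix language these are immediate, since $\Phi(yw)=\sum_j F_j y w F_j^*=y\Phi(w)$ for $y\in\{F_j\}'=\mathcal{B}$ and the adjoint statement holds for $\mathcal{A}=\{F_j^*\}'$; graphically they say that a $1$-box in $\mathcal{B}$ (resp.\ $\mathcal{A}$) slides through the left (resp.\ right) strand of $\widehat{x}$. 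Granting this, part (2) is immediate: for $y\in\mathcal{C}\subseteq\mathcal{A}$ we get $T(\zeta y)=T(\zeta)y=r\,\zeta y$, so $\zeta y\in\mathcal{E}$ and hence $\zeta\mathcal{C}\subseteq\mathcal{E}$.

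For part (1) I would prove the commutation of $p_{\max}$ with $\mathcal{C}$ first and then deduce that of $\zeta$. Fix a unitary $u\in\mathcal{C}$ (recall $I\in\mathcal{C}$, so $\mathcal{C}$ is unital and is spanned by its unitaries). Since $u\in\mathcal{B}$ and $u^*\in\mathcal{A}$, the bimodule relations give $T(uwu^*)=u\,T(w)\,u^*$; together with $uwu^*\geq0$ this shows that $w\mapsto uwu^*$ is a bijection of the cone $\mathcal{E}^+$ onto itself. Using $\mathcal{R}(uwu^*)=u\,\mathcal{R}(w)\,u^*$ we then obtain $p_{\max}=\bigvee_{w\in\mathcal{E}^+}\mathcal{R}(w)=\bigvee_{w\in\mathcal{E}^+}\mathcal{R}(uwu^*)=\bigvee_{w\in\mathcal{E}^+}u\,\mathcal{R}(w)\,u^*=u\,p_{\max}\,u^*$, so $[p_{\max},y]=0$ for all $y\in\mathcal{C}$. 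Now invoke the power series \eqref{eq:maximal support eigenvector}, $\zeta=\lim_{\mathbb{R}\ni z\to r^+}(z-r)\sum_{j\geq0}\frac{T^j(p_{\max})}{z^{j+1}}$. Iterating $T(uwu^*)=u\,T(w)\,u^*$ gives $u\,T^j(p_{\max})\,u^*=T^j(u\,p_{\max}\,u^*)=T^j(p_{\max})$, and since conjugation by $u$ is linear and norm-continuous it passes through the sum and the limit, yielding $u\zeta u^*=\zeta$. As the unitaries span $\mathcal{C}$, this gives $[\zeta,y]=0$ for all $y\in\mathcal{C}$.

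For part (3), by part (1) the positive element $\zeta$ commutes with the central projection $p_{\mathcal{D}}$ (the unit of the summand $\mathcal{D}$) and with every element of $\mathcal{D}$, so the abelian algebra $\{\zeta\}''$ generated by $\zeta$ and $I$ commutes with $\mathcal{D}$. Because $d=p_{\mathcal{D}}d=dp_{\mathcal{D}}$ for $d\in\mathcal{D}$, every product $ad$ with $a\in\{\zeta\}''$ equals $(ap_{\mathcal{D}})d$, and $\{\zeta\}''p_{\mathcal{D}}$ is precisely the unital (unit $p_{\mathcal{D}}$) algebra $\{p_{\mathcal{D}}\zeta\}''$ generated by the compression $p_{\mathcal{D}}\zeta$; this gives the first equality $\{\zeta\}''\mathcal{D}=\{p_{\mathcal{D}}\zeta\}''\mathcal{D}$. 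For the isomorphism, set $\mathcal{N}=\{p_{\mathcal{D}}\zeta\}''$, an abelian algebra commuting with $\mathcal{D}\cong M_n(\mathbb{C})$ and sharing its unit $p_{\mathcal{D}}$. The multiplication map $\mathcal{N}\otimes\mathcal{D}\to\mathcal{N}\mathcal{D}$, $a\otimes d\mapsto ad$, is a surjective $*$-homomorphism; expanding elements against a system of matrix units of $\mathcal{D}$ and using that $M_n(\mathbb{C})$ is simple shows that its kernel is trivial, whence $\{p_{\mathcal{D}}\zeta\}''\mathcal{D}\cong\{p_{\mathcal{D}}\zeta\}''\otimes\mathcal{D}$.

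The main obstacle I anticipate lies not in the algebra of part (3) but in pinning down the bimodule relations of the first paragraph directly from the planar definitions of $\mathcal{A}$ and $\mathcal{B}$, i.e.\ matching the sides and rotations in \eqref{eq:algebra A}--\eqref{eq:algebra B} with the left and right multiplications in $T(yw)$ and $T(wy)$. Once those identifications are fixed, the remainder is a clean assembly of the maximality of $p_{\max}$, the explicit power series for $\zeta$, and the standard splitting-off of a full matrix summand.
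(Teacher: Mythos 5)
Your proof is correct and follows essentially the same route as the paper: part (2) via sliding $y\in\mathcal{C}$ through $\widehat{x}$, part (1) by showing that conjugation by $\mathcal{C}$ preserves $\mathcal{E}^+$ and then pushing the resulting commutation with $p_{\max}$ through the power series \eqref{eq:maximal support eigenvector}, and part (3) by the standard splitting-off of a full matrix summand. The only difference is cosmetic: to get $[p_{\max},y]=0$ the paper works with $y^*\zeta y\in\mathcal{E}^+$ and compares range projections against $\mathcal{R}(\zeta)=p_{\max}$, whereas you use unitaries of $\mathcal{C}$ and the lattice supremum defining $p_{\max}$; both rest on the same bimodule relations $T(yw)=y\,T(w)$, $T(wy)=T(w)\,y$ that the paper verifies graphically.
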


\begin{proof}
(1)
For any $y\in\mathcal{C}$, we have
\begin{align*}
(y^*\zeta y)\ast \widehat{x}&=
y^* ( \zeta \ast \widehat{x} ) y=ry^*\zeta y,
\end{align*}
pictorially,
\begin{align*}
\raisebox{-1.4cm}{
\begin{tikzpicture}[scale=1.4]
\draw (0.2, -0.5)--(0.2, 1).. controls +(0, 0.3) and +(0, 0.3) .. (-0.4, 1);
\draw (0.2, -0.5).. controls +(0, -0.3) and +(0, -0.3) .. (-0.4, -0.5)--(-0.4, 1);
\draw  (0.5, -0.75)--(0.5, 1.25)  (0.5, -0.75)--(0.5, 1.25);
\draw [blue, fill=white] (0,0) rectangle (0.7, 0.5);
\node at (0.35, 0.25) { $\widehat{x}$};
\begin{scope}[shift={(-0.6, -0.5)}]
\draw [blue, fill=white] (0,0) rectangle (0.35, 0.35);
\node at (0.175, 0.175) {$y^*$};
\end{scope}
\begin{scope}[shift={(-0.6, 0.65)}]
\draw [blue, fill=white] (0,0) rectangle (0.35, 0.35);
\node at (0.175, 0.175) {$y$};
\end{scope}
\begin{scope}[shift={(-0.7, 0)}]
\draw [blue, fill=white] (0,0) rectangle (0.5, 0.5);
\node at (0.25, 0.25) {$\zeta$};
\end{scope}
\end{tikzpicture}}
=
\raisebox{-1.3cm}{
\begin{tikzpicture}[scale=1.4]
\draw  (0.5, -0.75)--(0.5, 1.25);
\draw [blue, fill=white] (0,0) rectangle (0.7, 0.5);
\node at (0.35, 0.25) { $\widehat{x}$};
\begin{scope}[shift={(0.35, -0.55)}]
\draw [blue, fill=white] (0,0) rectangle (0.35, 0.35);
\node at (0.175, 0.175) {$y^*$};
\end{scope}
\begin{scope}[shift={(0.35, 0.7)}]
\draw [blue, fill=white] (0,0) rectangle (0.35, 0.35);
\node at (0.175, 0.175) {$y$};
\end{scope}
\begin{scope}[shift={(-0.7, 0)}]
\draw [blue, fill=white] (0,0) rectangle (0.5, 0.5);
\node at (0.25, 0.25) {$\zeta$};
\end{scope}
\draw (0.2, 0.5).. controls +(0, 0.3) and +(0, 0.3) .. (-0.4, 0.5);
\draw (0.2, 0).. controls +(0, -0.3) and +(0, -0.3) .. (-0.4, 0);
\end{tikzpicture}}=r
\raisebox{-1.3cm}{
\begin{tikzpicture}[scale=1.4]
\draw  (0.5, -0.75)--(0.5, 1.25);
\begin{scope}[shift={(0.35, -0.55)}]
\draw [blue, fill=white] (0,0) rectangle (0.35, 0.35);
\node at (0.175, 0.175) {$y^*$};
\end{scope}
\begin{scope}[shift={(0.35, 0.7)}]
\draw [blue, fill=white] (0,0) rectangle (0.35, 0.35);
\node at (0.175, 0.175) {$y$};
\end{scope}
\begin{scope}[shift={(0.35, 0.075)}]
\draw [blue, fill=white] (0,0) rectangle (0.35, 0.35);
\node at (0.175, 0.175) {$\zeta$};
\end{scope}
\end{tikzpicture}}
\;.
\end{align*}
We see that $y^*\zeta y \in\mathcal{E}^+$.
Hence $\mathcal{R}(y^* \zeta)=\mathcal{R}( y^*\zeta y)\leq p_{\max}$
and then $y^* \zeta=p_{\max}y^* \zeta$.
Now we obtain that
\begin{align*}
y^* p_{\max}=p_{\max}y^* p_{\max},\ \forall y\in\mathcal{C}.
\end{align*}
This implies that $p_{\max}y=yp_{\max}$ for any $y\in\mathcal{C}$.
Note that $y \left( p_{\max}\ast \widehat{x}^{(\ast j)} \right)= \left( p_{\max}\ast \widehat{x}^{(\ast j)} \right)y$ for any $j \geq0$.
The following is the graph calculus for $j=1$.
\begin{align*}
 \raisebox{-1.1cm}{
\begin{tikzpicture}[scale=1.4]
\draw  (0.5, -0.3)--(0.5, 1.25)  ;
\draw [blue, fill=white] (0,0) rectangle (0.7, 0.5);
\node at (0.35, 0.25) { $\widehat{x}$};
\begin{scope}[shift={(0.35, 0.7)}]
\draw [blue, fill=white] (0,0) rectangle (0.35, 0.35);
\node at (0.175, 0.175) {$y$};
\end{scope}
\begin{scope}[shift={(-0.7, 0)}]
\draw [blue, fill=white] (0,0) rectangle (0.6, 0.5);
\node at (0.3, 0.25) {$p_{\max}$};
\end{scope}
\draw (0.2, 0.5).. controls +(0, 0.3) and +(0, 0.3) .. (-0.4, 0.5);
\draw (0.2, 0).. controls +(0, -0.3) and +(0, -0.3) .. (-0.4, 0);
\end{tikzpicture}}
=
 \raisebox{-1.1cm}{
\begin{tikzpicture}[scale=1.4]
\draw  (0.5, -0.3)--(0.5, 1.25) ;
\draw (-0.4, 0.95)--(-0.4, 0.5);
\draw (0.2, 0.95)--(0.2, 0.5);
\draw [blue, fill=white] (0,0) rectangle (0.7, 0.5);
\node at (0.35, 0.25) { $\widehat{x}$};
\begin{scope}[shift={(0, 0.6)}]
\draw [blue, fill=white] (0,0) rectangle (0.35, 0.35);
\node at (0.175, 0.175) {$\overline{y}$};
\end{scope}
\begin{scope}[shift={(-0.7, 0)}]
\draw [blue, fill=white] (0,0) rectangle (0.6, 0.5);
\node at (0.3, 0.25) {$p_{\max}$};
\end{scope}
\draw (0.2, 0.95).. controls +(0, 0.3) and +(0, 0.3) .. (-0.4, 0.95);
\draw (0.2, 0).. controls +(0, -0.3) and +(0, -0.3) .. (-0.4, 0);
\end{tikzpicture}}
=
 \raisebox{-1.1cm}{
\begin{tikzpicture}[scale=1.4]
\draw  (0.5, -0.3)--(0.5, 1.25)  ;
\draw  (-0.4, 0.95)--(-0.4, 0.5);
\draw (0.2, 0.95)--(0.2, 0.5) ;
\draw [blue, fill=white] (0,0) rectangle (0.7, 0.5);
\node at (0.35, 0.25) { $\widehat{x}$};
\begin{scope}[shift={(-0.6, 0.6)}]
\draw [blue, fill=white] (0,0) rectangle (0.35, 0.35);
\node at (0.175, 0.175) {$y$};
\end{scope}
\begin{scope}[shift={(-0.7, 0)}]
\draw [blue, fill=white] (0,0) rectangle (0.6, 0.5);
\node at (0.3, 0.25) {$p_{\max}$};
\end{scope}
\draw (0.2, 0.95).. controls +(0, 0.3) and +(0, 0.3) .. (-0.4, 0.95);
\draw (0.2, 0).. controls +(0, -0.3) and +(0, -0.3) .. (-0.4, 0);
\end{tikzpicture}}
=
 \raisebox{-1.1cm}{
\begin{tikzpicture}[scale=1.4]
\draw  (0.5, -0.75)--(0.5, 0.8);
\draw (-0.4, -0.45)--(-0.4, 0) ;
\draw (0.2, -0.45)--(0.2, 0) ;
\draw [blue, fill=white] (0,0) rectangle (0.7, 0.5);
\node at (0.35, 0.25) { $\widehat{x}$};
\begin{scope}[shift={(0, -0.45)}]
\draw [blue, fill=white] (0,0) rectangle (0.35, 0.35);
\node at (0.175, 0.175) {$\overline{y}$};
\end{scope}
\begin{scope}[shift={(-0.7, 0)}]
\draw [blue, fill=white] (0,0) rectangle (0.6, 0.5);
\node at (0.3, 0.25) {$p_{\max}$};
\end{scope}
\draw (0.2, 0.5).. controls +(0, 0.3) and +(0, 0.3) .. (-0.4, 0.5);
\draw (0.2, -0.45).. controls +(0, -0.3) and +(0, -0.3) .. (-0.4, -0.45);
\end{tikzpicture}}
=
 \raisebox{-1.1cm}{
\begin{tikzpicture}[scale=1.4]
\draw  (0.5, -0.75)--(0.5, 0.8);
\draw [blue, fill=white] (0,0) rectangle (0.7, 0.5);
\node at (0.35, 0.25) { $\widehat{x}$};
\begin{scope}[shift={(0.35, -0.55)}]
\draw [blue, fill=white] (0,0) rectangle (0.35, 0.35);
\node at (0.175, 0.175) {$y$};
\end{scope}
\begin{scope}[shift={(-0.7, 0)}]
\draw [blue, fill=white] (0,0) rectangle (0.6, 0.5);
\node at (0.3, 0.25) { $p_{\max}$};
\end{scope}
\draw (0.2, 0.5).. controls +(0, 0.3) and +(0, 0.3) .. (-0.4, 0.5);
\draw (0.2, 0).. controls +(0, -0.3) and +(0, -0.3) .. (-0.4, 0);
\end{tikzpicture}}.
\end{align*}
We have $\zeta y=y \zeta$.

(2) For any $y\in\mathcal{C}$, 
\begin{align*}
    (\zeta  y)\ast \widehat{x}=(\zeta\ast \widehat{x})y=r\zeta y,
\end{align*}
pictorially,
\begin{align*}
\raisebox{-1.4cm}{
\begin{tikzpicture}[scale=1.4]
\draw (0.2, -0.5)--(0.2, 1).. controls +(0, 0.3) and +(0, 0.3) .. (-0.4, 1);
\draw (0.2, -0.5).. controls +(0, -0.3) and +(0, -0.3) .. (-0.4, -0.5)--(-0.4, 1);
\draw  (0.5, -0.75)--(0.5, 1.25)  (0.5, -0.75)--(0.5, 1.25);
\draw [blue, fill=white] (0,0) rectangle (0.7, 0.5);
\node at (0.35, 0.25) { $\widehat{x}$};
\begin{scope}[shift={(-0.6, 0.65)}]
\draw [blue, fill=white] (0,0) rectangle (0.35, 0.35);
\node at (0.175, 0.175) {$y$};
\end{scope}
\begin{scope}[shift={(-0.7, 0)}]
\draw [blue, fill=white] (0,0) rectangle (0.5, 0.5);
\node at (0.25, 0.25) {$\zeta$};
\end{scope}
\end{tikzpicture}}
=
\raisebox{-1.3cm}{
\begin{tikzpicture}[scale=1.4]
\draw  (0.5, -0.75)--(0.5, 1.25);
\draw [blue, fill=white] (0,0) rectangle (0.7, 0.5);
\node at (0.35, 0.25) { $\widehat{x}$};
\begin{scope}[shift={(0.35, 0.7)}]
\draw [blue, fill=white] (0,0) rectangle (0.35, 0.35);
\node at (0.175, 0.175) {$y$};
\end{scope}
\begin{scope}[shift={(-0.7, 0)}]
\draw [blue, fill=white] (0,0) rectangle (0.5, 0.5);
\node at (0.25, 0.25) {$\zeta$};
\end{scope}
\draw (0.2, 0.5).. controls +(0, 0.3) and +(0, 0.3) .. (-0.4, 0.5);
\draw (0.2, 0).. controls +(0, -0.3) and +(0, -0.3) .. (-0.4, 0);
\end{tikzpicture}}=r
\raisebox{-1.4cm}{
\begin{tikzpicture}
\draw (0,-2)--(0,1);
\draw [blue, fill=white] (-0.3,-0.3) rectangle (0.3,0.3);
\node at (0,0) {$y$};
\begin{scope}[shift={(0, -1)}]
\draw [blue, fill=white] (-0.3,-0.3) rectangle (0.3,0.3);
\node at (0,0) {$\zeta$};
\end{scope}
\end{tikzpicture}}
\;.
\end{align*}

(3) Note that $\mathscr{P}_{1,\pm}\cong \oplus_{k=1}^m M_{n_k}(\mathbb{C})$.
Suppose $\mathcal{D}\subseteq M_{n_{k_0}}(\mathbb{C})$. 
Then $\mathcal{D}'\cap M_{n_{k_0}}(\mathbb{C})\cong  I_n\otimes M_{t}(\mathbb{C}) \oplus M_{n_{k_0}-nt}(\mathbb{C})$ for some $t\in \mathbb{N}$ via unitary equivalence $\alpha$.
Since $\zeta\in\mathcal{D}'\cap\mathscr{P}_{1,\pm}$, we have $p_\mathcal{D}\zeta$ is in 
$\alpha^{-1}(I_n\otimes M_{t}(\mathbb{C}) \oplus0)$.
Thus $\{p_\mathcal{D}\zeta\}''\mathcal{D}\cong \{p_\mathcal{D}\zeta\}''\otimes\mathcal{D}$. Since
$\zeta p_\mathcal{D}=p_\mathcal{D}\zeta$, we have $\{\zeta\}''\mathcal{D}=\{p_\mathcal{D}\zeta\}''\mathcal{D}$.
\end{proof}
We next prove $\mathcal{\zeta}\mathcal{C}^+=\mathcal{E}^+$ under some conditions and begin with the following two lemmas.
\begin{lemma}\label{lem:W equivalent discribe}
Let $q\in\mathscr{P}_{1,\pm}$ be a projection such that $q\leq p_{\max}$. Suppose $p_{\max}\in \mathcal{C}$.
Then 
$q\in \mathcal{C}$ if and only if $\mathcal{R}(q\ast \widehat{x})\leq q$ and  $\mathcal{R}((p_{\max}-q)\ast  \widehat{x}) \leq p_{\max}-q$.
\end{lemma}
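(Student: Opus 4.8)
The plan is to prove the two implications by reducing each range condition to a ``one--sided'' sliding relation, obtained by factoring $\widehat{x}=\widehat{x}^{1/2}\widehat{x}^{1/2}$ and cutting horizontally, exactly as in the proof of Equation \eqref{eq:cut down2}. First I would record the elementary reformulation: since $q\ast\widehat{x}\geq 0$ by the quantum Schur product theorem, the hypothesis $\mathcal{R}(q\ast\widehat{x})\leq q$ is equivalent to $(I-q)(q\ast\widehat{x})(I-q)=0$, and, writing $q':=p_{\max}-q$, the hypothesis $\mathcal{R}((p_{\max}-q)\ast\widehat{x})\leq p_{\max}-q$ is equivalent to $(I-q')(q'\ast\widehat{x})(I-q')=0$. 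Cutting each of these positive diagrams horizontally through $\widehat{x}^{1/2}$ turns the vanishing of a ``square'' into the vanishing of a half--diagram, i.e. a relation expressing that $q$ (resp. $q'$) slides through $\widehat{x}^{1/2}$ with the complementary projection blocking the other leg. I will call these relations (R1) and (R2); they are the diagrammatic analogues of $(I-q)F_jq=0$ and $(I-q')F_jq'=0$ in the Kraus picture of Remark \ref{rem:three commutant}.

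For the forward direction, suppose $q\in\mathcal{C}$. Then the defining exchange relations \eqref{eq:algebra C} allow me to pull $q$ through $\widehat{x}$ from either leg, so that $q\ast\widehat{x}=q\,(q\ast\widehat{x})\,q$; in the Kraus picture this is simply $\Phi(q)=q\,\Phi(I)\,q$, which is manifestly supported under $q$. Hence $\mathcal{R}(q\ast\widehat{x})\leq q$. Since $p_{\max}\in\mathcal{C}$ and $\mathcal{C}$ is a $C^*$-algebra, $q'=p_{\max}-q\in\mathcal{C}$ as well, and the identical computation gives $\mathcal{R}(q'\ast\widehat{x})\leq q'$, which establishes both range conditions.

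The substance is the converse, and this is where I expect the main obstacle. The relations (R1) and (R2) separately give only one--sided information, namely invariance of the range of $q$ (resp. of $q'$) under one side of $\widehat{x}^{1/2}$; neither alone forces $q$ into $\mathcal{A}$ or $\mathcal{B}$, so the two must genuinely be combined, and the hypotheses $p_{\max}\in\mathcal{C}$ and $q\leq p_{\max}$ enter exactly here. Concretely, I would expand (R2) using $I-q'=(I-p_{\max})+q$: the $(I-p_{\max})$ part slides freely through $\widehat{x}^{1/2}$ because $p_{\max}\in\mathcal{C}$, the identity $(I-p_{\max})q=0$ (from $q\leq p_{\max}$) together with (R1) kills the cross term, and $q\,p_{\max}=q$ collapses what remains, leaving precisely the complementary relation $q F_j=q F_j q$. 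Combined with (R1), which reads $F_j q=q F_j q$, this yields $F_j q=q F_j$, and its adjoint gives commutation with $F_j^{*}$; diagrammatically these are exactly the two defining relations of $\mathcal{A}\cap\mathcal{B}=\mathcal{C}$, so $q\in\mathcal{C}$. The bookkeeping I must watch is the conjugate $\overline{q}$ produced on one leg by the convolution and the correct identification of which leg of $\widehat{x}^{1/2}$ each projection occupies; modulo this, the argument is the faithful diagrammatic transcription of the matrix identity $F_j q=q F_j q=q F_j$.
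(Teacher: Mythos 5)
Your proof is correct and follows essentially the same route as the paper's: both reduce membership of $q$ in $\mathcal{C}$ to the vanishing of the two one-sided relations $(I-q)F_jq=0$ and $qF_j(I-q)=0$ by cutting $\widehat{x}=\widehat{x}^{1/2}\widehat{x}^{1/2}$, and both use the decomposition of the complement of $q$ through $p_{\max}$ together with $p_{\max}\in\mathcal{C}$ and $q\leq p_{\max}$ to kill the cross term. The paper phrases this diagrammatically and stops at the identity $q\bigl((I-q)\ast\widehat{x}\bigr)q=q\bigl((I-p_{\max})\ast\widehat{x}\bigr)q+q\bigl((p_{\max}-q)\ast\widehat{x}\bigr)q$, whereas you make the resulting commutation $F_jq=qF_jq=qF_j$ explicit in the Kraus picture, but the mathematical content is the same.
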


\begin{proof}
Note that a projection $q\in \mathcal{C}$ if and only if
 \begin{align*}
 \raisebox{-1cm}{
\begin{tikzpicture}[xscale=4, yscale=1.2]
\draw (0.175, 1.35)--(0.175, -0.3) (0.525, 1.35)--(0.525, -0.3);
\draw [blue, fill=white] (0,0) rectangle (0.7, 0.5);
\node at (0.35, 0.25) {$\widehat{x}$};
\begin{scope}[shift={(0.35, 0.7)}]
\draw [blue, fill=white] (0.01,0) rectangle (0.34, 0.45);
\node at (0.175, 0.225) {$I-q$};
\end{scope}
\begin{scope}[shift={(0, 0.7)}]
\draw [blue, fill=white] (0.01,0) rectangle (0.34, 0.45);
\node at (0.175, 0.225) {$\overline{q}$};
\end{scope}
\end{tikzpicture}}=0
\text{ and }
 \raisebox{-1cm}{
\begin{tikzpicture}[xscale=4, yscale=1.2]
\draw (0.175, 1.35)--(0.175, -0.3) (0.525, 1.35)--(0.525, -0.3);
\draw [blue, fill=white] (0,0) rectangle (0.7, 0.5);
\node at (0.35, 0.25) {$\widehat{x}$};
\begin{scope}[shift={(0.35, 0.7)}]
\draw [blue, fill=white] (0.01,0) rectangle (0.34, 0.45);
\node at (0.175, 0.225) {$q$};
\end{scope}
\begin{scope}[shift={(0, 0.7)}]
\draw [blue, fill=white] (0.01,0) rectangle (0.34, 0.45);
\node at (0.175, 0.225) {$\overline{I-q}$};
\end{scope}
\end{tikzpicture}}=0
 \end{align*}
They are equivalent to
\begin{align*}
 \raisebox{-1.3cm}{
\begin{tikzpicture}[scale=1.4]
\draw  (0.5, -0.75)--(0.5, 1.25);
\draw [blue, fill=white] (0,0) rectangle (0.7, 0.5);
\node at (0.35, 0.25) { $\widehat{x}$};
\begin{scope}[shift={(0.35, -0.6)}]
\draw [blue, fill=white] (-0.15,0) rectangle (0.65, 0.35);
\node at (0.25, 0.175) {\small $I-q$};
\end{scope}
\begin{scope}[shift={(0.35, 0.75)}]
\draw [blue, fill=white] (-0.15,0) rectangle (0.65, 0.35);
\node at (0.25, 0.175) {\small $I-q$};
\end{scope}
\begin{scope}[shift={(-0.7, 0)}]
\draw [blue, fill=white] (0,0) rectangle (0.5, 0.5);
\node at (0.25, 0.25) {$q$};
\end{scope}
\draw (0.25, 0.5).. controls +(0, 0.425) and +(0, 0.425) .. (-0.45, 0.5);
\draw (0.25, 0).. controls +(0, -0.425) and +(0, -0.425) .. (-0.45, 0);
\end{tikzpicture}}=0
\text{ and }
 \raisebox{-1.3cm}{
\begin{tikzpicture}[scale=1.4]
\draw  (0.5, -0.75)--(0.5, 1.25) ;
\draw [blue, fill=white] (0,0) rectangle (0.7, 0.5);
\node at (0.35, 0.25) { $\widehat{x}$};
\begin{scope}[shift={(0.35, -0.6)}]
\draw [blue, fill=white] (-0.15,0) rectangle (0.5, 0.35);
\node at (0.175, 0.175) {$q$};
\end{scope}
\begin{scope}[shift={(0.35, 0.75)}]
\draw [blue, fill=white] (-0.15,0) rectangle (0.5, 0.35);
\node at (0.175, 0.175) {$q$};
\end{scope}
\begin{scope}[shift={(-0.7, 0)}]
\draw [blue, fill=white] (-0.2,0) rectangle (0.6, 0.5);
\node at (0.2, 0.25) {\small $I-q$};
\end{scope}
\draw (0.25, 0.5).. controls +(0, 0.425) and +(0, 0.425) .. (-0.45, 0.5);
\draw (0.25, 0).. controls +(0, -0.425) and +(0, -0.425) .. (-0.45, 0);
\end{tikzpicture}}=0
\end{align*}
Note that
\begin{align*}
 \raisebox{-1.3cm}{
\begin{tikzpicture}[scale=1.4]
\draw  (0.5, -0.75)--(0.5, 1.25)  ;
\draw [blue, fill=white] (0,0) rectangle (0.7, 0.5);
\node at (0.35, 0.25) { $\widehat{x}$};
\begin{scope}[shift={(0.35, -0.6)}]
\draw [blue, fill=white] (-0.15,0) rectangle (0.5, 0.35);
\node at (0.175, 0.175) {$q$};
\end{scope}
\begin{scope}[shift={(0.35, 0.75)}]
\draw [blue, fill=white] (-0.15,0) rectangle (0.5, 0.35);
\node at (0.175, 0.175) {$q$};
\end{scope}
\begin{scope}[shift={(-0.7, 0)}]
\draw [blue, fill=white] (-0.2,0) rectangle (0.6, 0.5);
\node at (0.2, 0.25) {\small $I-q$};
\end{scope}
\draw (0.25, 0.5).. controls +(0, 0.425) and +(0, 0.425) .. (-0.45, 0.5);
\draw (0.25, 0).. controls +(0, -0.425) and +(0, -0.425) .. (-0.45, 0);
\end{tikzpicture}}=
 \raisebox{-1.3cm}{
\begin{tikzpicture}[scale=1.4]
\draw  (0.5, -0.75)--(0.5, 1.25) ;
\draw [blue, fill=white] (0,0) rectangle (0.7, 0.5);
\node at (0.35, 0.25) { $\widehat{x}$};
\begin{scope}[shift={(0.35, -0.6)}]
\draw [blue, fill=white] (-0.15,0) rectangle (0.5, 0.35);
\node at (0.175, 0.175) {$q$};
\end{scope}
\begin{scope}[shift={(0.35, 0.75)}]
\draw [blue, fill=white] (-0.15,0) rectangle (0.5, 0.35);
\node at (0.175, 0.175) {$q$};
\end{scope}
\begin{scope}[shift={(-0.7, 0)}]
\draw [blue, fill=white] (-0.5,0) rectangle (0.6, 0.5);
\node at (0.08, 0.25) {\small$I-p_{\max}$};
\end{scope}
\draw (0.25, 0.5).. controls +(0, 0.425) and +(0, 0.425) .. (-0.45, 0.5);
\draw (0.25, 0).. controls +(0, -0.425) and +(0, -0.425) .. (-0.45, 0);
\end{tikzpicture}}+
 \raisebox{-1.3cm}{
\begin{tikzpicture}[scale=1.4]
\draw  (0.5, -0.75)--(0.5, 1.25) ;
\draw [blue, fill=white] (0,0) rectangle (0.7, 0.5);
\node at (0.35, 0.25) { $\widehat{x}$};
\begin{scope}[shift={(0.35, -0.6)}]
\draw [blue, fill=white] (-0.15,0) rectangle (0.5, 0.35);
\node at (0.175, 0.175) {$q$};
\end{scope}
\begin{scope}[shift={(0.35, 0.75)}]
\draw [blue, fill=white] (-0.15,0) rectangle (0.5, 0.35);
\node at (0.175, 0.175) {$q$};
\end{scope}
\begin{scope}[shift={(-0.8, 0)}]
\draw [blue, fill=white] (-0.4,0) rectangle (0.7, 0.5);
\node at (0.17, 0.25) {\small$p_{\max}-q$};
\end{scope}
\draw (0.25, 0.5).. controls +(0, 0.425) and +(0, 0.425) .. (-0.45, 0.5);
\draw (0.25, 0).. controls +(0, -0.425) and +(0, -0.425) .. (-0.45, 0);
\end{tikzpicture}}.
\end{align*}
By the assumption, we see the lemma is true.
\end{proof}
We say a nonzero positive element $y\in\mathscr{P}_{1,\mp}$ is a right PF eigenvector of an $\mathfrak{F}$-positive element $x$ if $\widehat{x}\ast y=ry$. 

\begin{lemma}\label{lem:center and range equivalent}
Suppose $\mathscr{P}$ is a  $C^*$-planar algebra and $x\in\mathscr{P}_{2,\pm}$ is $\mathfrak{F}$-positive.
Suppose that there exists a   right PF eigenvector $y_1\in\mathscr{P}_{1,\mp}$ such that  $p_{\max} \leq \overline{\mathcal{R}(y_1)}$.
Then for any $y \in\mathcal{E}^+$ satisfying $\mathcal{R}(y)\zeta=\zeta \mathcal{R}(y)$, we have
$\mathcal{R}((p_{\max}-\mathcal{R}(y))\ast \widehat{x}) = p_{\max}-\mathcal{R}(y)$.
\begin{proof}
Note that
\begin{align*}
r \zeta=\zeta\ast \widehat{ x}
=(\mathcal{R}(y)\zeta) * \widehat{x}+ ((p_{\max}-\mathcal{R}(y))\zeta)\ast \widehat{x}.
\end{align*}
Recall that $\mathcal{R}((\mathcal{R}(y)\zeta) \ast \widehat{x})= \mathcal{R}(y)$.
We obtain
\begin{align*}
r\mathcal{R}(y) \zeta
=& r \mathcal{R}(y) \zeta \mathcal{R}(y) \\
=& (\mathcal{R}(y)\zeta) * \widehat{x }+ \mathcal{R}(y)(((p_{\max}-\mathcal{R}(y))\zeta)\ast \widehat{x})\mathcal{R}(y).
\end{align*}
Note that
\begin{align*}
(\mathcal{R}(y)\zeta) * \widehat{x } *y_1
= r (\mathcal{R}(y)\zeta)  *y_1.
\end{align*}
By taking the convolution of $y_1$, we see that
\begin{align*}
\left( \mathcal{R}(y)(((p_{\max}-\mathcal{R}(y))\zeta)\ast \widehat{x})\mathcal{R}(y) \right)
\ast y_1=0.
\end{align*}
By the assumption that $p_{\max}\leq \overline{\mathcal{R}(y_1)}$ again, we see that $\mathcal{R}(y)(((p_{\max}-\mathcal{R}(y))\zeta)\ast \widehat{x})\mathcal{R}(y) =0$.
Thus
\begin{align*}
\mathcal{R}((p_{\max}-\mathcal{R}(y))\ast \widehat{x})
=\mathcal{R}(((p_{\max}-\mathcal{R}(y))\zeta)\ast \widehat{x})
\leq p_{\max}-\mathcal{R}(y).
\end{align*}
Note that
\begin{align*}
p_{\max}&=\mathcal{R}( p_{\max} \ast \widehat{x})\\
&=\mathcal{R}(\mathcal{R}(y)\ast \widehat{\Phi})\bigvee \mathcal{R}((( p_{\max}  -\mathcal{R}(y))\ast \widehat{x})\\
&=\mathcal{R}(y)\bigvee \mathcal{R}(((p_{\max}-\mathcal{R}(y))\ast  \widehat{x}).
\end{align*}
We have
\begin{align*}
\mathcal{R}((p_{\max}  -\mathcal{R}(y))\ast \widehat{x})=p_{\max} -\mathcal{R}(y).
\end{align*}
This completes the proof of the proposition.
\end{proof}
\end{lemma}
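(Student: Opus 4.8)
The plan is to study the PF equation $\zeta\ast\widehat{x}=r\zeta$ for the maximal-support eigenvector $\zeta$ (with $\mathcal{R}(\zeta)=p_{\max}$) after splitting $\zeta$ along the projection $q:=\mathcal{R}(y)$, which by hypothesis commutes with $\zeta$. Since $q\le p_{\max}$, I would first record the $\mathfrak{F}$-positive decomposition $r\zeta=(q\zeta)\ast\widehat{x}+\big((p_{\max}-q)\zeta\big)\ast\widehat{x}$, both summands being positive by the quantum Schur product theorem. The whole argument aims to show that the interaction between these two pieces is forced to vanish, and this is precisely what computes $\mathcal{R}\big((p_{\max}-q)\ast\widehat{x}\big)$.

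The first technical input I would set up is the range identity $\mathcal{R}\big((q\zeta)\ast\widehat{x}\big)=q$. Because $q$ commutes with $\zeta$ and $q\le\mathcal{R}(\zeta)$, the positive element $q\zeta$ has range exactly $q$, hence is two-sided comparable to $y$ (both have range $q$ in the finite-dimensional $C^*$-algebra $\mathscr{P}_{1,\pm}$); quantum Schur order-preservation transports this comparability through convolution, so $(q\zeta)\ast\widehat{x}$ is comparable to $y\ast\widehat{x}=ry$ and therefore has range $q$. Compressing the decomposition by $q$ on both sides, and using that $(q\zeta)\ast\widehat{x}$ is already supported under $q$, I obtain $r\,q\zeta=(q\zeta)\ast\widehat{x}+q\big(((p_{\max}-q)\zeta)\ast\widehat{x}\big)q$.

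The crux is to kill the remaining cross term $w:=q\big(((p_{\max}-q)\zeta)\ast\widehat{x}\big)q\ge0$. I would convolve the last identity on the right with the right PF eigenvector $y_1$, using associativity together with $\widehat{x}\ast y_1=ry_1$ so that $(q\zeta)\ast\widehat{x}\ast y_1=r(q\zeta)\ast y_1$; the two diagonal contributions cancel and leave $w\ast y_1=0$. The main obstacle --- and the only place the hypothesis $p_{\max}\le\overline{\mathcal{R}(y_1)}$ enters --- is to upgrade $w\ast y_1=0$ to $w=0$. The idea is that $w\ast y_1$ is a positive pairing essentially of the form $\mathrm{tr}(w\,\overline{y_1})$, and since $\mathcal{R}(w)\le q\le p_{\max}\le\overline{\mathcal{R}(y_1)}$, the element $\overline{y_1}$ (whose range projection is $\overline{\mathcal{R}(y_1)}$) is strictly positive on the support of $w$; faithfulness of the trace then forces $w=0$. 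I expect making this pairing-and-faithfulness step precise to be the delicate point, as it is where the planar convolution with $y_1$ must be identified with an honest faithful positive functional on the relevant corner.

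Finally, $w=0$ means $\mathcal{R}(w')\perp q$ for $w':=((p_{\max}-q)\zeta)\ast\widehat{x}$, while $(p_{\max}-q)\zeta\le\|\zeta\|\,p_{\max}$ gives $\mathcal{R}(w')\le\mathcal{R}(p_{\max}\ast\widehat{x})=p_{\max}$; together these yield $\mathcal{R}(w')\le p_{\max}-q$, and comparability of $(p_{\max}-q)\zeta$ with $(p_{\max}-q)$ identifies $\mathcal{R}(w')$ with $\mathcal{R}\big((p_{\max}-q)\ast\widehat{x}\big)$. To close, I would invoke the range identity $p_{\max}=\mathcal{R}(p_{\max}\ast\widehat{x})=\mathcal{R}(q\ast\widehat{x})\vee\mathcal{R}\big((p_{\max}-q)\ast\widehat{x}\big)=q\vee\mathcal{R}\big((p_{\max}-q)\ast\widehat{x}\big)$; since the second projection lies under $p_{\max}-q$ and is orthogonal to $q$, this join equals $p_{\max}$ only if $\mathcal{R}\big((p_{\max}-q)\ast\widehat{x}\big)=p_{\max}-q$, which is the assertion.
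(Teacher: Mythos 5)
Your proposal is correct and follows essentially the same route as the paper's proof: the same splitting of $r\zeta=\zeta\ast\widehat{x}$ along $q=\mathcal{R}(y)$, the same compression by $q$ using $\mathcal{R}((q\zeta)\ast\widehat{x})=q$, the same convolution against the right PF eigenvector $y_1$ to kill the cross term via faithfulness of the trace on the support $\overline{\mathcal{R}(y_1)}\geq p_{\max}$, and the same closing join identity $p_{\max}=q\vee\mathcal{R}((p_{\max}-q)\ast\widehat{x})$. You in fact supply justifications (the comparability argument for the range identity, and the explicit bound $\mathcal{R}(w')\leq p_{\max}$) that the paper leaves implicit.
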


\begin{corollary}\label{cor:center and range equivalent}Suppose $\mathscr{P}$ is a $C^*$-planar algebra and $x\in\mathscr{P}_{2,\pm}$ is $\mathfrak{F}$-positive. 
Suppose that there exists a right PF eigenvector 
$y_1\in\mathscr{P}_{1,\mp}$ such that $p_{\max}=\overline{\mathcal{R}(y_1)}$. 
Then for any  $y\in\mathcal{E}^+$, $\zeta\mathcal{R}(y)=\mathcal{R}(y)\zeta$ if and only if $\mathcal{R}(y)\in\mathcal{C}^+$.
\begin{proof}
Note that $p_{\max}=\overline{\mathcal{R}(y_1)}$ implies  $p_{\max}\in\mathcal{C}^+$.
From Lemmas \ref{lem:W equivalent discribe} and \ref{lem:center and range equivalent}, we could prove the corollary.
\end{proof}
\end{corollary}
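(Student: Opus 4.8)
The plan is to read the biconditional through the two preceding lemmas together with Theorem \ref{thm:center}(1), after first disposing of the auxiliary fact that $p_{\max}\in\mathcal C^+$. For that auxiliary fact I would use that $\mathcal A=\mathcal B^*$ and that $p_{\max}$ is self-adjoint: a self-adjoint element lying in $\mathcal B$ automatically lies in $\mathcal A=\mathcal B^*$, hence in $\mathcal C=\mathcal A\cap\mathcal B$, so it suffices to prove $p_{\max}\in\mathcal B$. I would get this by combining two complementary ``invariance'' conditions. The left PF eigenvector $\zeta$ with $\mathcal R(\zeta)=p_{\max}$ supplies one half — this is exactly the computation behind the cutting-down lemma, giving $(I-p_{\max})(p_{\max}*\widehat x)(I-p_{\max})=0$ — while the hypothesis $p_{\max}=\overline{\mathcal R(y_1)}$ for the right PF eigenvector $y_1$ supplies the other half. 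Transporting the support of the right eigenvector through the $180^\circ$ rotation $\overline{\cdot}$ is the step that must be handled with care.

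The implication $\mathcal R(y)\in\mathcal C^+\Rightarrow \zeta\mathcal R(y)=\mathcal R(y)\zeta$ is immediate, being the special case $c=\mathcal R(y)$ of Theorem \ref{thm:center}(1), which asserts $[\zeta,c]=0$ for every $c\in\mathcal C$.

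For the converse I would set $q=\mathcal R(y)$ and note $q\le p_{\max}$, since $y\in\mathcal E^+$ and $p_{\max}$ is the maximal support. The goal is to verify the two range hypotheses of Lemma \ref{lem:W equivalent discribe}. First, because $\mathcal R(y)=q$ there is $\lambda>0$ with $\lambda q\le y$; convolving with $\widehat x\ge0$ and invoking the quantum Schur product theorem yields $\lambda\,(q*\widehat x)\le y*\widehat x=ry$, whence $\mathcal R(q*\widehat x)\le\mathcal R(y)=q$. Second, the standing hypothesis $\zeta q=q\zeta$ lets me apply Lemma \ref{lem:center and range equivalent} — whose hypothesis $p_{\max}\le\overline{\mathcal R(y_1)}$ holds since $p_{\max}=\overline{\mathcal R(y_1)}$ — giving $\mathcal R((p_{\max}-q)*\widehat x)=p_{\max}-q$. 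With both range conditions established and $p_{\max}\in\mathcal C$ in hand, Lemma \ref{lem:W equivalent discribe} forces $q\in\mathcal C$, and as a projection $q\in\mathcal C^+$.

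The main obstacle is the auxiliary claim $p_{\max}\in\mathcal C^+$: everything after it is bookkeeping of range projections plus the single monotonicity estimate above, but the claim itself rests on identifying, via the rotation $\overline{\cdot}$, the support of $y_1$ with the complementary invariant subspace of $p_{\max}$. In the completely positive picture of Remark \ref{rem:three commutant} this is transparent — $\mathcal B=\{F_j\}'$, the left eigenvector gives $p_{\max}^\perp F_j p_{\max}=0$ and the right eigenvector gives $p_{\max}F_j p_{\max}^\perp=0$, so $p_{\max}$ commutes with each $F_j$ — hence I would first confirm the argument there and then transcribe it diagrammatically using the defining pictures \eqref{eq:algebra B} for $\mathcal B$.
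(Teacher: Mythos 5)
Your proposal is correct and follows essentially the same route as the paper: the easy direction via Theorem \ref{thm:center}(1), and the converse by verifying the two range conditions of Lemma \ref{lem:W equivalent discribe} (the first from $\lambda q\le y$ plus the Schur product theorem, the second from Lemma \ref{lem:center and range equivalent}), after establishing the auxiliary fact $p_{\max}\in\mathcal{C}^+$. Your elaboration of that auxiliary fact --- self-adjointness plus $\mathcal{A}=\mathcal{B}^*$, with the left and right eigenvectors supplying the two halves of the invariance --- is a correct filling-in of a step the paper leaves unexplained.
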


\begin{proposition}\label{prop:multiplicative structure commutative}
Suppose $\mathscr{P}$ is a C$^*$-planar algebra and $x\in\mathscr{P}_{2,\pm}$ is $\mathfrak{F}$-positive.
Suppose that there exists a right PF eigenvector 
$y_1\in\mathscr{P}_{1,\mp}$ such that $p_{\max}=\overline{\mathcal{R}(y_1)}$, and $\zeta\mathcal{R}(y)=\mathcal{R}(y)\zeta$ for any $y \in \mathcal{E}^+$. 
Then $$\mathcal{E}^+=\zeta \mathcal{C}^+.$$
 \end{proposition}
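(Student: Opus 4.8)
The plan is to prove the two inclusions $\zeta\mathcal{C}^+\subseteq\mathcal{E}^+$ and $\mathcal{E}^+\subseteq\zeta\mathcal{C}^+$ separately. The forward inclusion is essentially already in hand: for $c\in\mathcal{C}^+$, Theorem \ref{thm:center}(1) gives $[\zeta,c]=0$, so $\zeta c=\zeta^{1/2}c\zeta^{1/2}\geq 0$, while Theorem \ref{thm:center}(2) gives $\zeta c\in\mathcal{E}$; hence $\zeta c\in\mathcal{E}^+$. This step uses only the conclusions of Theorem \ref{thm:center} and none of the extra hypotheses of the proposition.

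For the reverse inclusion, fix $y\in\mathcal{E}^+$ and set $q=\mathcal{R}(y)$. The standing hypothesis $\zeta\mathcal{R}(y)=\mathcal{R}(y)\zeta$, together with the existence of a right eigenvector $y_1$ with $p_{\max}=\overline{\mathcal{R}(y_1)}$, lets me invoke Corollary \ref{cor:center and range equivalent} to conclude $q\in\mathcal{C}^+$; in fact the same reasoning applied to every element of $\mathcal{E}^+$ shows $\mathscr{Q}\subseteq\mathcal{C}$, i.e. the range projection of every positive PF eigenvector lies in $\mathcal{C}$. Since $q\leq p_{\max}$ and $\zeta$ is strictly positive, hence invertible, on the corner $p_{\max}\mathscr{P}_{1,\pm}p_{\max}$, I may form $c:=\zeta^{-1/2}y\zeta^{-1/2}$ using the inverse of $\zeta^{1/2}$ on $p_{\max}$; then $c\geq 0$, and because $[\zeta,q]=0$ one has $\mathcal{R}(c)=q$ and $y=\zeta^{1/2}c\zeta^{1/2}$. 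Everything now reduces to proving $c\in\mathcal{C}$, for once this is known Theorem \ref{thm:center}(1) yields $[\zeta,c]=0$ and therefore $y=\zeta^{1/2}c\zeta^{1/2}=\zeta c\in\zeta\mathcal{C}^+$.

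The heart of the argument, and the step I expect to be the main obstacle, is showing $c\in\mathcal{C}$; I will do this by peeling off the spectral projections of $c$ one eigenvalue at a time. Because $q\in\mathcal{C}$, Theorem \ref{thm:center}(2) and $[\zeta,q]=0$ give $\zeta q=\zeta^{1/2}q\zeta^{1/2}\in\mathcal{E}^+$. Let $\mu_1>0$ be the smallest eigenvalue of $c$ on its range $q$, with spectral projection $e_1$. Since $\mathcal{E}$ is a linear subspace of $\mathscr{P}_{1,\pm}$, the difference $y-\mu_1\zeta q=\zeta^{1/2}(c-\mu_1 q)\zeta^{1/2}$ again lies in $\mathcal{E}$, and it is positive because $c-\mu_1 q\geq 0$; thus $y-\mu_1\zeta q\in\mathcal{E}^+$. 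As $\zeta^{1/2}$ is invertible on $p_{\max}\geq q$, its range projection is exactly $q-e_1$, which therefore belongs to $\mathscr{Q}\subseteq\mathcal{C}$, whence $e_1=q-(q-e_1)\in\mathcal{C}$. Iterating on the residual eigenvector $y-\mu_1\zeta q$ (whose associated element is $c-\mu_1 q$) and using finite-dimensionality, I obtain after finitely many steps that every spectral projection of $c$ lies in $\mathcal{C}$, so $c=\sum_i\mu_i e_i\in\mathcal{C}^+$. Combined with the first paragraph this closes the reverse inclusion.

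The one point requiring care is the bookkeeping of this iteration: at each stage I must verify that the residual element stays positive with range projection in $\mathscr{Q}$, so that Corollary \ref{cor:center and range equivalent} continues to apply, and that the eigenvalues strictly increase so that the process terminates. Both are guaranteed by the spectral calculus for $c$ together with the linearity of $\mathcal{E}$ and the hypothesis $\mathscr{Q}\subseteq\mathcal{C}$.
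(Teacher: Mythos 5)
Your argument is correct in substance but follows a genuinely different route from the paper's. The paper handles the inclusion $\mathcal{E}^+\subseteq\zeta\mathcal{C}^+$ by convexity: it applies Corollary \ref{cor:center and range equivalent} only to the extreme points $y$ of the compact convex set $(\mathcal{E}^+)_1$, observes that $\zeta\mathcal{R}(y)/tr_{1,\pm}(\zeta\mathcal{R}(y))$ is a trace-one positive eigenvector with the same range projection as $y$, and invokes the rigidity statement of Proposition \ref{prop: minimal projection and extremal point 1} (an extreme point is determined by its range projection) to conclude that $y$ itself equals $\zeta\mathcal{R}(y)/tr_{1,\pm}(\zeta\mathcal{R}(y))\in\zeta\mathcal{C}^+$; the general case follows because every element of $\mathcal{E}^+$ is a positive combination of extreme points and $\mathcal{C}^+$ is a convex cone. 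You instead fix an arbitrary $y\in\mathcal{E}^+$, conjugate by $\zeta^{-1/2}$ on the corner $p_{\max}\mathscr{P}_{1,\pm}p_{\max}$, and show $c=\zeta^{-1/2}y\zeta^{-1/2}\in\mathcal{C}$ by peeling off its spectral projections, using Corollary \ref{cor:center and range equivalent} at every stage to place each successive range projection in $\mathcal{C}$. Your route avoids the extreme-point machinery entirely (in particular it does not need Proposition \ref{prop: minimal projection and extremal point 1}) at the cost of a finite induction on the spectrum of $c$; both proofs rest on the same two external inputs, Theorem \ref{thm:center} and Corollary \ref{cor:center and range equivalent}.

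One step is under-justified as written. You assert that $\mathcal{R}\bigl(\zeta^{1/2}(c-\mu_1 q)\zeta^{1/2}\bigr)=q-e_1$ because ``$\zeta^{1/2}$ is invertible on $p_{\max}$.'' Invertibility alone only gives that this range projection is $\mathcal{R}\bigl(\zeta^{1/2}(q-e_1)\bigr)$, a projection of the same rank as $q-e_1$ which can differ from it when $e_1$ fails to commute with $\zeta$ --- and a priori it may fail, since $[\zeta,q]=0$ does not force $[\zeta,e_1]=0$. The claim is nevertheless true here, but only because the standing hypothesis applies to the new eigenvector $y-\mu_1\zeta q\in\mathcal{E}^+$: its range projection $f$ commutes with $\zeta$, hence $\zeta^{1/2}$ maps the range of $f$, which is $\zeta^{1/2}$ applied to the range of $q-e_1$, onto itself; this forces $\zeta^{1/2}$ to preserve the range of $q-e_1$ and therefore $f=q-e_1$. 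You should make this appeal to the commutation hypothesis explicit at each stage of the iteration; with that repair the proof is complete.
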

 
\begin{proof}
By Theorem \ref{thm:center}, we have  $\zeta\mathcal{C}^+\subseteq \mathcal{E}^+$.
For any extreme point $y\in (\mathcal{E}^+)_1$, we have $\mathcal{R}(y)\in \mathcal{C}^+$ from Corollary \ref{cor:center and range equivalent}.
Note that $\dfrac{\zeta \mathcal{R}(y)}{tr_{1,\pm}(\zeta \mathcal{R}(y)) }\in (\mathcal{E}^+)_1$ and $\mathcal{R}(\zeta \mathcal{R}(y))=\mathcal{R}(y)$.
By Proposition \ref{prop: minimal projection and extremal point 1}, we have
\begin{align*}
\dfrac{\zeta\mathcal{R}(y)}{tr_{1,\pm}(\zeta \mathcal{R}(y))}=y.
\end{align*}
Thus $\mathcal{E}^+\subseteq  \zeta\mathcal{C}^+ $.
In a summary, we have that $\mathcal{E}^+= \zeta \mathcal{C}^+$.
\end{proof}
We need a litter more effort to release the conditions in Proposition \ref{prop:multiplicative structure commutative}.
Define
\begin{align}\label{eq:cutting down positive definite2}
x_{\zeta}:=
\raisebox{-1.6cm}{
\begin{tikzpicture}[yscale=1.5, xscale=2.2]
\draw [blue, fill=white] (0,0) rectangle (0.5, 0.5);
\node at (0.25, 0.25) {$x$};
\draw (0.35, 0.5)--(0.5, 0.75)   (0.35, 0)--(0.5, -0.25);
\draw (0.15, 0.5)--(0, 0.75)  (0.15, 0)--(0, -0.25);
\begin{scope}[shift={(-0.2,0.75)}]
\draw [blue, fill=white] (0,0) rectangle (0.4, 0.4);
\node at (0.2, 0.2) {$\zeta^{-1/2}$};
\draw (0.2, 0.4)--(0.2, 0.65);
\end{scope}
\begin{scope}[shift={(0.3,0.75)}]
\draw [blue, fill=white] (0,0) rectangle (0.4, 0.4);
\node at (0.2, 0.2) {$\overline{\zeta^{-1/2}}$};
\draw (0.2, 0.4)--(0.2, 0.65);
\end{scope}
\begin{scope}[shift={(-0.2,-0.65)}]
\draw [blue, fill=white] (0,0) rectangle (0.4, 0.4);
\node at (0.2, 0.2) {$\zeta^{1/2}$};
\draw (0.2,0)--(0.2,- 0.25);
\end{scope}
\begin{scope}[shift={(0.3,-0.65)}]
\draw [blue, fill=white] (0,0) rectangle (0.4, 0.4);
\node at (0.2, 0.2) {$\overline{\zeta^{1/2}}$};
\draw (0.2,0)--(0.2,- 0.25);
\end{scope}
\end{tikzpicture}}\;.
\end{align}

\begin{remark}
It is clear that $x_\zeta$ is $\mathfrak{F}$-positive. Moreover, the spectral radius of $x_\zeta$ is the same as $x$, that is $r$, and $p_{\max}\ast \widehat{x_\zeta}=rp_{\max}$.
\end{remark}
\begin{notation}\label{notation:ABC algebra}
We denote the $C^*$-algebra $\mathcal{C}$  for $x_\zeta$ by $\mathcal{C}_\zeta$, where $\mathcal{C}$
is defined in Equation \eqref{eq:algebra C}. For the clarity, we also write the graphically representation for $\mathcal{C}_\zeta$.
\begin{align}
    \mathcal{C}_\zeta:=\left\{y\in\mathscr{P}_{1,\pm}:\ 
    \raisebox{-1.1cm}{
\begin{tikzpicture}[scale=1.4]
\draw (0.2, -0.3)--(0.2, 1.25)   (0.5, -0.3)--(0.5, 1.25) ;
\draw [blue, fill=white] (0,0) rectangle (0.7, 0.5);
\node at (0.35, 0.25) {$x_\zeta$};
\begin{scope}[shift={(0.35, 0.7)}]
\draw [blue, fill=white] (0,0) rectangle (0.35, 0.35);
\node at (0.175, 0.175) {$\overline{y}$};
\end{scope}
\end{tikzpicture}}
=
\raisebox{-0.9cm}{
\begin{tikzpicture}[scale=1.4]
\draw (0.2, -0.75)--(0.2, .8)   (0.5, -0.75)--(0.5, 0.8) ;
\draw [blue, fill=white] (0,0) rectangle (0.7, 0.5);
\node at (0.35, 0.25) {$x_\zeta$};
\begin{scope}[shift={(0.35, -0.55)}]
\draw [blue, fill=white] (0,0) rectangle (0.35, 0.35);
\node at (0.175, 0.175) {$\overline{y}$};
\end{scope}
\end{tikzpicture}},\  \raisebox{-1.1cm}{
\begin{tikzpicture}[scale=1.4]
\draw (0.2, -0.3)--(0.2, 1.25)   (0.5, -0.3)--(0.5, 1.25) ;
\draw [blue, fill=white] (0,0) rectangle (0.7, 0.5);
\node at (0.35, 0.25) {$x_\zeta$};
\begin{scope}[shift={(0, 0.7)}]
\draw [blue, fill=white] (0,0) rectangle (0.35, 0.35);
\node at (0.175, 0.175) {$y$};
\end{scope}
\end{tikzpicture}}
=
\raisebox{-0.9cm}{
\begin{tikzpicture}[scale=1.4]
\draw (0.2, -0.75)--(0.2, .8)   (0.5, -0.75)--(0.5, 0.8) ;
\draw [blue, fill=white] (0,0) rectangle (0.7, 0.5);
\node at (0.35, 0.25) {$x_\zeta$};
\begin{scope}[shift={(0, -0.55)}]
\draw [blue, fill=white] (0,0) rectangle (0.35, 0.35);
\node at (0.175, 0.175) {$y$};
\end{scope}
\end{tikzpicture}}
    \right\}\subseteq\mathscr{P}_{1,\pm}.
\end{align}
We denote the PF eigenspace $\mathcal{E}$ for $x_\zeta$
by $\mathcal{E}_\zeta$, where $\mathcal{E}$ is defined in Notation \ref{notation:eigenspace}. Specifically,
\begin{align*}
\mathcal{E}_\zeta=\{y\in \mathscr{P}_{1,\pm}: y*\widehat{x_\zeta}=ry\} \subseteq \mathscr{P}_{1,\pm}.
\end{align*}
 It is clear that  $\mathcal{E}_\zeta^+=\zeta^{-1/2}\mathcal{E}^+\zeta^{-1/2}$. 
\end{notation}

\begin{theorem}[\bf $C^*$-Algebraic Structure]\label{thm:C star structure}
Suppose $\mathscr{P}$ is a  C$^*$-planar algebra and $x\in\mathscr{P}_{2,\pm}$ is $\mathfrak{F}$-positive.
Suppose that there exists a right PF eigenvector $y_1\in\mathscr{P}_{1,\mp}$ of $x$  such that $p_{\max}\bigwedge\overline{\mathcal{R}(y_1)}^\perp=0$. Then 
\begin{align*}
   \mathcal{E}^+=\zeta^{1/2}\mathcal{C}_\zeta^+\zeta^{1/2}.
\end{align*}
\end{theorem}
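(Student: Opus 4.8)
The plan is to reduce the statement to an application of Proposition~\ref{prop:multiplicative structure commutative} for the rescaled element $x_\zeta$. By Notation~\ref{notation:ABC algebra} we have $\mathcal{E}_\zeta^+=\zeta^{-1/2}\mathcal{E}^+\zeta^{-1/2}$, equivalently $\mathcal{E}^+=\zeta^{1/2}\mathcal{E}_\zeta^+\zeta^{1/2}$, and since $\mathcal{R}(\zeta)=p_{\max}$ gives $\zeta^{1/2}p_{\max}=\zeta^{1/2}$, it suffices to establish the structural identity
\[
\mathcal{E}_\zeta^+=p_{\max}\mathcal{C}_\zeta^+ ,
\]
because conjugating it by $\zeta^{1/2}$ and absorbing the $p_{\max}$ into $\zeta^{1/2}$ returns exactly $\mathcal{E}^+=\zeta^{1/2}\mathcal{C}_\zeta^+\zeta^{1/2}$. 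So the whole task is to verify, for $x_\zeta$, the two hypotheses of Proposition~\ref{prop:multiplicative structure commutative}.

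First I would pin down the maximal-support eigenvector of $x_\zeta$. By the remark preceding Notation~\ref{notation:ABC algebra} we have $p_{\max}*\widehat{x_\zeta}=rp_{\max}$; iterating the convolution yields $p_{\max}*\widehat{x_\zeta}^{(*j)}=r^{j}p_{\max}$ for all $j\geq0$, and feeding this into the limit formula~\eqref{eq:maximal support eigenvector} applied to $x_\zeta$ collapses the geometric series to give $\zeta_{x_\zeta}=p_{\max}$. Since $\mathcal{R}(y)\leq p_{\max}$ for every $y\in\mathcal{E}_\zeta^+$, the projection $p_{\max}=\zeta_{x_\zeta}$ commutes with each such $\mathcal{R}(y)$; hence the commutation hypothesis $\zeta_{x_\zeta}\mathcal{R}(y)=\mathcal{R}(y)\zeta_{x_\zeta}$ of Proposition~\ref{prop:multiplicative structure commutative} holds automatically for $x_\zeta$.

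The remaining, and main, obstacle is the existence of a right PF eigenvector $y_1'\in\mathscr{P}_{1,\mp}$ of $x_\zeta$ with $p_{\max}=\overline{\mathcal{R}(y_1')}$ \emph{exactly}; this is precisely the role of passing to $x_\zeta$, namely to upgrade the given $\leq$ to an equality. Starting from the hypothesized right PF eigenvector $y_1$ of $x$, which satisfies $p_{\max}\wedge\overline{\mathcal{R}(y_1)}^\perp=0$, i.e. $p_{\max}\leq\overline{\mathcal{R}(y_1)}$, I would transport it to $x_\zeta$ by the transformation on $\mathscr{P}_{1,\mp}$ dual to the cutdown $x\mapsto x_\zeta$, namely conjugation by $\overline{\zeta^{1/2}}$ together with compression by $\overline{p_{\max}}$, producing $y_1'$ from the datum $\overline{\zeta^{1/2}}\,(\overline{p_{\max}}\,y_1\,\overline{p_{\max}})\,\overline{\zeta^{1/2}}$. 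The right-handed analogue of Equation~\eqref{eq:cut down2}, expressing that $\widehat{x}$ preserves the $p_{\max}$-corner, is what guarantees that this compressed element remains an $r$-eigenvector of $x_\zeta$; and because $\overline{\zeta^{1/2}}$ is invertible on the corner the conjugation leaves the support unchanged, so $\overline{\mathcal{R}(y_1')}=p_{\max}\wedge\overline{\mathcal{R}(y_1)}=p_{\max}$ by the hypothesis. The delicate point, and the technical heart of the argument, is exactly this check that eigenvector-preservation and the support identity survive the cutdown, which forces careful use of the invariance relations defining $x_\zeta$.

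With both hypotheses verified, Proposition~\ref{prop:multiplicative structure commutative} applied to $x_\zeta$ gives $\mathcal{E}_\zeta^+=\zeta_{x_\zeta}\mathcal{C}_\zeta^+=p_{\max}\mathcal{C}_\zeta^+$, and conjugating by $\zeta^{1/2}$ as in the first paragraph yields $\mathcal{E}^+=\zeta^{1/2}\mathcal{C}_\zeta^+\zeta^{1/2}$, completing the proof.
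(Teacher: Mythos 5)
Your proposal follows essentially the same route as the paper's own proof: transport $y_1$ to the right PF eigenvector $\overline{\zeta^{1/2}}\,y_1\,\overline{\zeta^{1/2}}$ of $x_\zeta$ whose co-support is exactly $p_{\max}$ (your extra compression by $\overline{p_{\max}}$ is harmless since $\overline{\zeta^{1/2}}$ already has support $\overline{p_{\max}}$), note that the maximal-support eigenvector of $x_\zeta$ is $p_{\max}$ itself so the commutation hypothesis of Proposition~\ref{prop:multiplicative structure commutative} is automatic, obtain $\mathcal{E}_\zeta^+=p_{\max}\mathcal{C}_\zeta^+$, and conjugate back by $\zeta^{1/2}$. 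The one slip is the gloss ``i.e.\ $p_{\max}\leq\overline{\mathcal{R}(y_1)}$'' together with the identity $\overline{\mathcal{R}(y_1')}=p_{\max}\wedge\overline{\mathcal{R}(y_1)}$: for noncommuting projections the hypothesis $p_{\max}\wedge\overline{\mathcal{R}(y_1)}^\perp=0$ is strictly weaker than $p_{\max}\leq\overline{\mathcal{R}(y_1)}$, and the range of $\overline{p_{\max}}\,y_1\,\overline{p_{\max}}$ is $\mathcal{R}(\overline{p_{\max}}\,y_1^{1/2})$ rather than a lattice meet --- but a rank count shows the stated (weaker) hypothesis already forces $\mathcal{R}(\overline{p_{\max}}\,y_1^{1/2})=\overline{p_{\max}}$, so the conclusion $\overline{\mathcal{R}(y_1')}=p_{\max}$, which is all the argument needs, still holds.
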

\begin{proof}
 We have 
 \begin{align*}
     \widehat{x_\zeta}\ast (\overline{\zeta^{1/2}} y_1\overline{\zeta^{1/2}})=
     &
   \overline{\zeta^{1/2}}(  \widehat{x_p}\ast y_1)\overline{\zeta^{1/2}}\\
   =&\overline{\zeta^{1/2}} (\widehat{x}\ast y_1)\overline{\zeta^{1/2}}\\
   =&  r(\overline{\zeta^{1/2}} y_1\overline{\zeta^{1/2}}).
 \end{align*}
Thus $\overline{\zeta^{1/2}} y_1\overline{\zeta^{1/2}}$ is a right PF eigenvector of $x_\zeta$
such that  $p_{\max}=\overline{\mathcal{R}(\overline{\zeta^{1/2}} y_1\overline{\zeta^{1/2}})}$. 
Take the maximal-support eigenvector of $x_\zeta$ as $p_{\max}$, we have $p_{\max}\mathcal{R}(y)=\mathcal{R}(y)=\mathcal{R}(y)p_{\max}$ for any $y \in \mathcal{E}_\zeta^+$. So $x_\zeta$ satisfies the conditions of Proposition \ref{prop:multiplicative structure commutative} and   
 we have $\mathcal{E}_\zeta^+=p_{\max}\mathcal{C}_\zeta^+$. From $\mathcal{E}_\zeta^+=\zeta^{-1/2}\mathcal{E}^+\zeta^{-1/2}$, we could obtain the conclusion.
 \end{proof}
\begin{remark}
    Let $x\in\mathscr{P}_{2,\pm}$ be a $\mathfrak{F}$-positive element such that
      \begin{align}
      \raisebox{-0.45cm}{
\begin{tikzpicture}[scale=1.2]
\path  (0.55, 0.5) .. controls +(0, 0.3) and +(0, 0.3) .. (0.85, 0.5) -- (0.85, 0) .. controls +(0, -0.3) and +(0, -0.3) ..  (0.55, 0) ;
\path  (-0.15, -0.3) rectangle (0.15, 0.8);
\draw (0.15, 0.8)--(0.15, -0.3);
\draw [blue, fill=white] (0,0) rectangle (0.7, 0.5);
\node at (0.35, 0.25) {$\widehat{x}$};
\draw (0.55, 0.5) .. controls +(0, 0.3) and +(0, 0.3) .. (0.85, 0.5) -- (0.85, 0) .. controls +(0, -0.3) and +(0, -0.3) ..  (0.55, 0) ;
\end{tikzpicture}}=I.
  \end{align}
  Then $\widehat{x}:$ $\mathscr{P}_{1,\pm}\to\mathscr{P}_{1,\pm}$, $y\mapsto y\ast \widehat{x}$, 
  defines a trace-preserving, completely positive map. Theorem \ref{thm:C star structure} gives the $C^*$-algebraic structure of the PF eigenspace of $x$. 
\end{remark}

\subsection{Algebraic Structures of Eigenspaces}
In this section, we let $p=p_{\max}$ for simplicity, denote the algebras $\mathcal{A}$ and $\mathcal{B}$ for $x_p$ by  $\mathcal{A}_p$ and $\mathcal{B}_p$, where $\mathcal{A}$ and $\mathcal{B}$ are defined in Equations \eqref{eq:algebra A} and \eqref{eq:algebra B}, and $x_p$ is defined in Equation \eqref{eq:cutting down positive definite}.
We denote the PF eigenspace $\mathcal{E}$ for $x_p$ by $\mathcal{E}_p$, where $\mathcal{E}$ is defined in Notation \ref{notation:eigenspace}. It is clear that $\mathcal{E}_p^+=\mathcal{E}^+$. 

 \begin{lemma}\label{lem:algebras after cut down}
Suppose $\mathscr{P}$ is a  C$^*$-planar algebra and $x\in\mathscr{P}_{2,\pm}$ is $\mathfrak{F}$-positive. Then $$p\mathcal{C}_\zeta^+\subseteq \zeta^{-1/2}\mathcal{A}_p\zeta^{1/2}\cap\zeta^{1/2}\mathcal{B}_{p}\zeta^{-1/2}.$$
 \end{lemma}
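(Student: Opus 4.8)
The plan is to exploit the fact that $x_\zeta$ is just $x_p$ conjugated by $\zeta^{\pm 1/2}$, so that the two defining exchange relations of the $C^*$-algebra $\mathcal{C}_\zeta$ collapse, after the boundary $\zeta^{\pm1/2}$ boxes are absorbed, onto the single relations defining $\mathcal{A}_p$ and $\mathcal{B}_p$ respectively.

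First I would record the identity
\[
x_\zeta=(\zeta^{1/2}\otimes\overline{\zeta^{1/2}})\,x_p\,(\zeta^{-1/2}\otimes\overline{\zeta^{-1/2}}),
\]
which follows from the definition \eqref{eq:cutting down positive definite2} together with $\zeta^{1/2}=p\,\zeta^{1/2}=\zeta^{1/2}p$ and $\zeta^{-1/2}=p\,\zeta^{-1/2}=\zeta^{-1/2}p$ (recall $\mathcal{R}(\zeta)=p$): inserting the projection boxes $p,\overline p$ of $x_p$ between $x$ and the $\zeta^{\pm1/2}$ boxes changes nothing. Consequently, for any $y$ the diagram expressing the first (resp. second) relation of \eqref{eq:algebra C} with $x$ replaced by $x_\zeta$ is obtained from the corresponding relation of $x_p$ by grafting $\zeta^{1/2}$ and $\zeta^{-1/2}$ boxes onto the two through-strings.

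Next I would take $y\in p\mathcal{C}_\zeta^{+}$. Since $y$ is positive with $\mathcal{R}(y)\le p$ and $\zeta^{\pm1/2}$ are invertible positive elements of $p\mathscr{P}_{1,\pm}p$, the conjugates $\zeta^{1/2}y\zeta^{-1/2}$ and $\zeta^{-1/2}y\zeta^{1/2}$ are well defined and supported under $p$. As $y$ lies in $\mathcal{C}_\zeta$, it satisfies both defining relations of \eqref{eq:algebra C} for $x_\zeta$. Using the identity above and sliding the boundary $\zeta^{\pm1/2}$ boxes along the through-strings until they reach $y$ (the placement of the boxes in \eqref{eq:cutting down positive definite2} dictating which power lands on which side), the first relation for $x_\zeta$ becomes exactly the $\mathcal{A}_p$-relation of \eqref{eq:algebra A} for $\zeta^{1/2}y\zeta^{-1/2}$, and the second relation for $x_\zeta$ becomes the $\mathcal{B}_p$-relation of \eqref{eq:algebra B} for $\zeta^{-1/2}y\zeta^{1/2}$. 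Hence $\zeta^{1/2}y\zeta^{-1/2}\in\mathcal{A}_p$ and $\zeta^{-1/2}y\zeta^{1/2}\in\mathcal{B}_p$, so that
\[
y=\zeta^{-1/2}\bigl(\zeta^{1/2}y\zeta^{-1/2}\bigr)\zeta^{1/2}\in\zeta^{-1/2}\mathcal{A}_p\zeta^{1/2},
\qquad
y=\zeta^{1/2}\bigl(\zeta^{-1/2}y\zeta^{1/2}\bigr)\zeta^{-1/2}\in\zeta^{1/2}\mathcal{B}_p\zeta^{-1/2},
\]
which gives the asserted inclusion into the intersection.

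The main obstacle I anticipate is the bookkeeping of the conjugation directions, so that the first relation genuinely produces an element of $\mathcal{A}_p$ and the second one an element of $\mathcal{B}_p$ rather than swapping them, and so that the $\zeta^{\pm1/2}$ boxes are only slid past $\widehat{x}$ after the support projection $p$ has been used via \eqref{eq:cut down2}. This is precisely where the identity $x_\zeta=(\zeta^{1/2}\otimes\overline{\zeta^{1/2}})x_p(\zeta^{-1/2}\otimes\overline{\zeta^{-1/2}})$ and the relations $\zeta^{1/2}p=\zeta^{1/2}$, $\zeta^{-1/2}p=\zeta^{-1/2}$ do the real work. Everything else is a routine diagrammatic manipulation—equivalently, in the matrix model of Remark \ref{rem:three commutant} a routine computation with the commutants $\{F_j\}'$ and $\{F_j^*\}'$ after replacing the Kraus operators of $x_p$ by their $\zeta^{\pm1/2}$-conjugates—and uses neither irreducibility nor the further hypotheses needed later in Theorem \ref{thm:C star structure}.
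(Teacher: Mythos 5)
Your proposal is correct and follows essentially the same route as the paper: both take $y\in p\mathcal{C}_\zeta^+$, rewrite the defining exchange relations of $\mathcal{C}_\zeta$ (for $\widehat{x_\zeta}$, i.e.\ $\widehat{x}$ decorated with $\zeta^{\pm1/2}$ boxes) by absorbing those boxes into $y$ and replacing them with $p,\overline{p}$ via $\zeta^{\pm1/2}p=\zeta^{\pm1/2}$, thereby identifying $\zeta^{1/2}y\zeta^{-1/2}\in\mathcal{A}_p$ and $\zeta^{-1/2}y\zeta^{1/2}\in\mathcal{B}_p$. Your explicit preliminary identity $x_\zeta=(\zeta^{1/2}\otimes\overline{\zeta^{1/2}})x_p(\zeta^{-1/2}\otimes\overline{\zeta^{-1/2}})$ is exactly the fact the paper uses implicitly in its ``this is equivalent to'' step.
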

 \begin{proof}
 For any $y\in p\mathcal{C}_\zeta^+$, 
 \begin{align*}
     \raisebox{-2.6cm}{
\begin{tikzpicture}[yscale=1.5, xscale=2.2]
\draw [blue, fill=white] (0,0) rectangle (0.5, 0.5);
\node at (0.25, 0.25) {$\widehat{x}$};
\draw (0.35, 0.5)--(0.5, 0.75)   (0.35, 0)--(0.5, -0.25);
\draw (0.15, 0.5)--(0, 0.75)  (0.15, 0)--(0, -0.25);
\begin{scope}[shift={(-0.2,0.75)}]
\draw [blue, fill=white] (0,0) rectangle (0.4, 0.4);
\node at (0.2, 0.2) {$\overline{\zeta^{1/2}}$};
\draw (0.2, 0.4)--(0.2, 0.65);
\end{scope}
\begin{scope}[shift={(0.3,0.75)}]
\draw [blue, fill=white] (0,0) rectangle (0.4, 0.4);
\node at (0.2, 0.2) {$\zeta^{-1/2}$};
\draw (0.2, 0.4)--(0.2, 0.65);
\end{scope}
\begin{scope}[shift={(-0.2,-0.65)}]
\draw [blue, fill=white] (0,0) rectangle (0.4, 0.4);
\node at (0.2, 0.2) {$\overline{\zeta^{1/2}}$};
\draw (0.2,0)--(0.2,- 0.25);
\end{scope}
\begin{scope}[shift={(0.3,-0.65)}]
\draw [blue, fill=white] (0,0) rectangle (0.4, 0.4);
\node at (0.2, 0.2) {$\zeta^{-1/2}$};
\draw (0.2,0)--(0.2,- 0.25);
\end{scope}
\begin{scope}[shift={(-0.2,-1.3)}]
\draw [blue, fill=white] (0,0) rectangle (0.4, 0.4);
\node at (0.2, 0.2) {$\overline{y}$};
\draw (0.2,0)--(0.2,- 0.25);
\end{scope}
\end{tikzpicture}}=\raisebox{-2.6cm}{
\begin{tikzpicture}[yscale=1.5, xscale=2.2]
\draw [blue, fill=white] (0,0) rectangle (0.5, 0.5);
\node at (0.25, 0.25) {$\widehat{x}$};
\draw (0.35, 0.5)--(0.5, 0.75)   (0.35, 0)--(0.5, -0.25);
\draw (0.15, 0.5)--(0, 0.75)  (0.15, 0)--(0, -0.25);
\begin{scope}[shift={(-0.2,0.75)}]
\draw [blue, fill=white] (0,0) rectangle (0.4, 0.4);
\node at (0.2, 0.2) {$\overline{\zeta^{1/2}}$};
\draw (0.2, 0.4)--(0.2, 0.65);
\end{scope}
\begin{scope}[shift={(0.3,0.75)}]
\draw [blue, fill=white] (0,0) rectangle (0.4, 0.4);
\node at (0.2, 0.2) {$\zeta^{-1/2}$};
\draw (0.2, 0.4)--(0.2, 0.65);
\end{scope}
\begin{scope}[shift={(-0.2,-0.65)}]
\draw [blue, fill=white] (0,0) rectangle (0.4, 0.4);
\node at (0.2, 0.2) {$\overline{\zeta^{1/2}}$};
\draw (0.2,0)--(0.2,- 0.25);
\end{scope}
\begin{scope}[shift={(0.3,-0.65)}]
\draw [blue, fill=white] (0,0) rectangle (0.4, 0.4);
\node at (0.2, 0.2) {$\zeta^{-1/2}$};
\draw (0.2,0)--(0.2,- 0.25);
\end{scope}
\begin{scope}[shift={(0.3,-1.3)}]
\draw [blue, fill=white] (0,0) rectangle (0.4, 0.4);
\node at (0.2, 0.2) {$y$};
\draw (0.2,0)--(0.2,- 0.25);
\end{scope}
\end{tikzpicture}}.
 \end{align*}
 This is equivalent to 
 \begin{align*}
\raisebox{-1.6cm}{
\begin{tikzpicture}[yscale=1.5, xscale=4]
\draw [blue, fill=white] (0,0) rectangle (0.5, 0.5);
\node at (0.25, 0.25) {$\widehat{x}$};
\draw (0.35, 0.5)--(0.5, 0.75)   (0.35, 0)--(0.5, -0.25);
\draw (0.15, 0.5)--(0, 0.75)  (0.15, 0)--(0, -0.25);
\begin{scope}[shift={(-0.2,0.75)}]
\draw [blue, fill=white] (0,0) rectangle (0.4, 0.4);
\node at (0.2, 0.2) {$\overline{p}$};
\draw (0.2, 0.4)--(0.2, 0.65);
\end{scope}
\begin{scope}[shift={(0.3,0.75)}]
\draw [blue, fill=white] (0,0) rectangle (0.4, 0.4);
\node at (0.2, 0.2) {$p$};
\draw (0.2, 0.4)--(0.2, 0.65);
\end{scope}
\begin{scope}[shift={(-0.2,-0.65)}]
\draw [blue, fill=white] (0,0) rectangle (0.4, 0.4);
\node at (0.2, 0.2) {\tiny$\overline{\zeta^{1/2}y\zeta^{-1/2}}$};
\draw (0.2, 0)--(0.2, -0.25);
\end{scope}
\begin{scope}[shift={(0.3,-0.65)}]
\draw [blue, fill=white] (0,0) rectangle (0.4, 0.4);
\node at (0.2, 0.2) {$p$};
\draw (0.2, 0)--(0.2, -0.25) ;
\end{scope}
\end{tikzpicture}}
=\raisebox{-1.6cm}{
\begin{tikzpicture}[yscale=1.5, xscale=4]
\draw [blue, fill=white] (0,0) rectangle (0.5, 0.5);
\node at (0.25, 0.25) {$\widehat{x}$};
\draw (0.35, 0.5)--(0.5, 0.75)   (0.35, 0)--(0.5, -0.25);
\draw (0.15, 0.5)--(0, 0.75)  (0.15, 0)--(0, -0.25);
\begin{scope}[shift={(-0.2,0.75)}]
\draw [blue, fill=white] (0,0) rectangle (0.4, 0.4);
\node at (0.2, 0.2) { $\overline{p}$};
\draw (0.2, 0.4)--(0.2, 0.65) ;
\end{scope}
\begin{scope}[shift={(0.3,0.75)}]
\draw [blue, fill=white] (0,0) rectangle (0.4, 0.4);
\node at (0.2, 0.2) {$p$};
\draw (0.2, 0.4)--(0.2, 0.65) ;
\end{scope}
\begin{scope}[shift={(-0.2,-0.65)}]
\draw [blue, fill=white] (0,0) rectangle (0.4, 0.4);
\node at (0.2, 0.2) {$\overline{p}$};
\draw (0.2, 0)--(0.2, -0.25) ;
\end{scope}
\begin{scope}[shift={(0.3,-0.65)}]
\draw [blue, fill=white] (0,0) rectangle (0.4, 0.4);
\node at (0.2, 0.2) {\tiny$\zeta^{1/2}y\zeta^{-1/2}$};
\draw (0.2, 0)--(0.2, -0.25) ;
\end{scope}
\end{tikzpicture}}.
\end{align*}
 So $\zeta^{1/2}y\zeta^{-1/2}\in \mathcal{A}_{p}$.  Analogously, we have $\zeta^{-1/2}y\zeta^{1/2}\in \mathcal{B}_{p}$.
 \end{proof}
 
 Now we obtain the structure of the eigenvector space with respect to $\mathcal{A}_p$ and $\mathcal{B}_p$.

 \begin{theorem}[\bf Algebraic Structure I]\label{thm:multiplicative structure 2}
Suppose $\mathscr{P}$ is a  C$^*$-planar algebra and $x\in\mathscr{P}_{2,\pm}$ is $\mathfrak{F}$-positive.
Suppose that there exists a right PF eigenvector $y_1\in\mathscr{P}_{1,\mp}$ of $x$  such that $p_{\max}\bigwedge\overline{\mathcal{R}(y_1)}^\perp=0$.
 Then
 \begin{align*}
 \mathcal{E}^+=\left\{ v \zeta v^*: v\in \mathcal{A}_{p}\right\}=\left\{ v^* \zeta v: v\in \mathcal{B}_{p}\right\}.
 \end{align*}
 \end{theorem}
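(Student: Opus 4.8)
The plan is to prove the single equality $\mathcal{E}^+=\{v\zeta v^*:v\in\mathcal{A}_p\}$; the stated second equality $\{v\zeta v^*:v\in\mathcal{A}_p\}=\{v^*\zeta v:v\in\mathcal{B}_p\}$ is then automatic, because $\mathcal{A}_p=\mathcal{B}_p^*$ (as $x_p$ is $\mathfrak{F}$-positive) and the substitution $v\mapsto v^*$ carries one set onto the other. I would establish the two inclusions separately, using Theorem \ref{thm:C star structure} and Lemma \ref{lem:algebras after cut down} as the principal inputs, together with the maximal-support eigenvector $\zeta$ and the pseudo-inverse $\zeta^{-1/2}$ of $\zeta^{1/2}$ on the range $p=p_{\max}$ of $\zeta$.

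For the inclusion $\{v\zeta v^*:v\in\mathcal{A}_p\}\subseteq\mathcal{E}^+$, fix $v\in\mathcal{A}_p$, so that $v^*\in\mathcal{B}_p$. The element $v\zeta v^*$ is manifestly positive, so it suffices to show it is scaled by $r$ under the convolution action of $\widehat{x_p}$. The defining relations \eqref{eq:algebra A} and \eqref{eq:algebra B} for $x_p$ say exactly that an element of $\mathcal{A}_p$ can be slid out to the left of a convolution with $\widehat{x_p}$ while an element of $\mathcal{B}_p$ can be slid out to the right; this is the same graphical move carried out in the proof of Theorem \ref{thm:center}. Applying it to $v$ on the left and $v^*$ on the right yields $(v\zeta v^*)*\widehat{x_p}=v(\zeta*\widehat{x_p})v^*$. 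Since $\zeta\in\mathcal{E}^+=\mathcal{E}_p^+$ we have $\zeta*\widehat{x_p}=r\zeta$, hence $(v\zeta v^*)*\widehat{x_p}=r\,v\zeta v^*$, so $v\zeta v^*\in\mathcal{E}_p^+=\mathcal{E}^+$.

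For the reverse inclusion, let $y\in\mathcal{E}^+$. Theorem \ref{thm:C star structure} gives $y=\zeta^{1/2}c\zeta^{1/2}$ with $c\in\mathcal{C}_\zeta^+$; since $p$ is central in $\mathcal{C}_\zeta$ (Theorem \ref{thm:center}(1) applied to $x_\zeta$) and $\zeta^{1/2}=\zeta^{1/2}p=p\zeta^{1/2}$, we may assume $c\in p\mathcal{C}_\zeta^+$ without changing $y$. Let $c^{1/2}\in p\mathcal{C}_\zeta^+$ be its square root and set $v:=\zeta^{1/2}c^{1/2}\zeta^{-1/2}$. By Lemma \ref{lem:algebras after cut down}, $p\mathcal{C}_\zeta^+\subseteq\zeta^{-1/2}\mathcal{A}_p\zeta^{1/2}$, which means precisely that $v=\zeta^{1/2}c^{1/2}\zeta^{-1/2}\in\mathcal{A}_p$. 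Finally, using $\zeta^{-1/2}\zeta\zeta^{-1/2}=p$ and $c^{1/2}p=c^{1/2}$,
\[
v\zeta v^*=\zeta^{1/2}c^{1/2}\bigl(\zeta^{-1/2}\zeta\zeta^{-1/2}\bigr)c^{1/2}\zeta^{1/2}=\zeta^{1/2}c^{1/2}\,p\,c^{1/2}\zeta^{1/2}=\zeta^{1/2}c\zeta^{1/2}=y,
\]
exhibiting $y\in\{v\zeta v^*:v\in\mathcal{A}_p\}$ and completing the equality.

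I expect the main obstacle to be the intertwining identity $(v\zeta v^*)*\widehat{x_p}=v(\zeta*\widehat{x_p})v^*$ in the first inclusion: one must check, at the graphical level, that the $\mathcal{A}_p$-relation genuinely moves $v$ through $\widehat{x_p}$ on one side while the $\mathcal{B}_p$-relation moves $v^*$ through on the other, so that the two moves are compatible and leave $\zeta*\widehat{x_p}$ isolated in the middle; keeping the orientation conventions of \eqref{eq:algebra A}--\eqref{eq:algebra B} straight is the delicate point (the analogous, fully worked computation in the proof of Theorem \ref{thm:center} is the template to follow). A secondary bookkeeping issue is that $\zeta^{-1/2}$ is only defined on the range $p$ of $\zeta$; all the products above should be read inside $p\mathscr{P}_{1,\pm}p$, where $\zeta^{1/2}$ is invertible, so that the cancellations $\zeta^{-1/2}\zeta\zeta^{-1/2}=p$ and $c^{1/2}p=c^{1/2}$ are legitimate.
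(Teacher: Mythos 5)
Your proposal is correct and follows essentially the same route as the paper: the forward inclusion by sliding $v\in\mathcal{A}_p$ and $v^*\in\mathcal{B}_p$ through the convolution with $\widehat{x_p}$ to isolate $\zeta*\widehat{x_p}=r\zeta$ (the graphical move from Theorem \ref{thm:center}), and the reverse inclusion by writing $y=\zeta^{1/2}w\zeta^{1/2}$ via Theorem \ref{thm:C star structure} and converting $pw^{1/2}$ into an element of $\mathcal{A}_p$ via Lemma \ref{lem:algebras after cut down}. Your explicit bookkeeping of the pseudo-inverse $\zeta^{-1/2}$ on the range $p$ and the observation that the second equality follows from $\mathcal{A}_p=\mathcal{B}_p^*$ are both consistent with (and slightly more careful than) what the paper leaves implicit.
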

\begin{proof}
Suppose that $v\in \mathcal{A}_{p}$.
Then
\begin{align*}
\raisebox{-1.3cm}{
\begin{tikzpicture}[scale=1.4]
\draw (0.2, -0.5)--(0.2, 1).. controls +(0, 0.3) and +(0, 0.3) .. (-0.4, 1);
\draw (0.2, -0.5).. controls +(0, -0.3) and +(0, -0.3) .. (-0.4, -0.5)--(-0.4, 1);
\draw  (0.5, -0.75)--(0.5, 1.25)  (0.5, -0.75)--(0.5, 1.25);
\draw [blue, fill=white] (0,0) rectangle (0.7, 0.5);
\node at (0.35, 0.25) { $\widehat{x_p}$};
\begin{scope}[shift={(-0.6, -0.5)}]
\draw [blue, fill=white] (0,0) rectangle (0.35, 0.35);
\node at (0.175, 0.175) {$v$};
\end{scope}
\begin{scope}[shift={(-0.6, 0.65)}]
\draw [blue, fill=white] (0,0) rectangle (0.35, 0.35);
\node at (0.175, 0.175) {$v^*$};
\end{scope}
\begin{scope}[shift={(-0.7, 0)}]
\draw [blue, fill=white] (0,0) rectangle (0.5, 0.5);
\node at (0.25, 0.25) {$\zeta$};
\end{scope}
\end{tikzpicture}}
=
\raisebox{-1.3cm}{
\begin{tikzpicture}[scale=1.4]
\draw  (0.5, -0.75)--(0.5, 1.25);
\draw [blue, fill=white] (0,0) rectangle (0.7, 0.5);
\node at (0.35, 0.25) { $\widehat{x_p}$};
\begin{scope}[shift={(0.35, -0.55)}]
\draw [blue, fill=white] (0,0) rectangle (0.35, 0.35);
\node at (0.175, 0.175) {$v$};
\end{scope}
\begin{scope}[shift={(0.35, 0.7)}]
\draw [blue, fill=white] (0,0) rectangle (0.35, 0.35);
\node at (0.175, 0.175) {$v^*$};
\end{scope}
\begin{scope}[shift={(-0.7, 0)}]
\draw [blue, fill=white] (0,0) rectangle (0.5, 0.5);
\node at (0.25, 0.25) {$\zeta$};
\end{scope}
\draw (0.25, 0.5).. controls +(0, 0.425) and +(0, 0.425) .. (-0.45, 0.5);
\draw (0.25, 0).. controls +(0, -0.425) and +(0, -0.425) .. (-0.45, 0);
\end{tikzpicture}}
=r\ \raisebox{-1.3cm}{\begin{tikzpicture}[scale=1.4]
\draw (-0.4, -0.75)--(-0.4, 1.25);
\begin{scope}[shift={(-0.6, -0.5)}]
\draw [blue, fill=white] (0,0) rectangle (0.35, 0.35);
\node at (0.175, 0.175) {$v$};
\end{scope}
\begin{scope}[shift={(-0.6, 0.65)}]
\draw [blue, fill=white] (0,0) rectangle (0.35, 0.35);
\node at (0.175, 0.175) {$v^*$};
\end{scope}
\begin{scope}[shift={(-0.7, 0)}]
\draw [blue, fill=white] (0,0) rectangle (0.5, 0.5);
\node at (0.25, 0.25) {$\zeta$};
\end{scope}
\end{tikzpicture}}\;.
\end{align*}
So $v\zeta v^*\in \mathcal{E}_p^+=\mathcal{E}^+$.

By Theorem \ref{thm:C star structure}, for any $y\in \mathcal{E}^+$, there exists $w\in \mathcal{C}_\zeta^+$ such that $ y=\zeta^{1/2} w \zeta^{1/2}$.
By Lemma \ref{lem:algebras after cut down}, we see that there exists $v\in\mathcal{A}_{p}$ such that
\begin{align*}
p w^{1/2} = \zeta^{-1/2} v \zeta^{1/2}.
\end{align*}
Hence
\begin{align*}
y = \zeta^{1/2} (\zeta^{-1/2} v \zeta^{1/2})(\zeta^{-1/2} v \zeta^{1/2})^* \zeta^{1/2}=v \zeta v^*.
\end{align*}
This completes the proof of the theorem.
\end{proof}
We obtain another representation of $\mathcal{E}^+$ with a little effort.
 \begin{theorem}[\bf Algebraic Structure II]\label{thm:multiplicative structure 3}
Suppose $\mathscr{P}$ is a  C$^*$-planar algebra and $x\in\mathscr{P}_{2,\pm}$ is $\mathfrak{F}$-positive.
Suppose that there exists a right PF eigenvector $y_1\in\mathscr{P}_{1,\mp}$ of $x$  such that $p_{\max}\bigwedge\overline{\mathcal{R}(y_1)}^\perp=0$.
 Then
 \begin{align*}
 \mathcal{E}^+=(\mathcal{A}_p\zeta)^+=(\zeta\mathcal{B}_p)^+.
 \end{align*}
 \end{theorem}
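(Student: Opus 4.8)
The plan is to establish each of the two equalities by a pair of inclusions, using the $C^*$-structure theorem (Theorem~\ref{thm:C star structure}) together with Lemma~\ref{lem:algebras after cut down} for one direction and the defining exchange relations of $\mathcal{A}_p$, $\mathcal{B}_p$ for the other. Since the identity $\mathcal{E}^+=(\zeta\mathcal{B}_p)^+$ is the mirror image of $\mathcal{E}^+=(\mathcal{A}_p\zeta)^+$ (interchange left and right, $\mathcal{A}_p\leftrightarrow\mathcal{B}_p$, and use the inclusion $p\mathcal{C}_\zeta^+\subseteq\zeta^{1/2}\mathcal{B}_p\zeta^{-1/2}$ of Lemma~\ref{lem:algebras after cut down} in place of $p\mathcal{C}_\zeta^+\subseteq\zeta^{-1/2}\mathcal{A}_p\zeta^{1/2}$), I would spell out only the first and remark that the second is completely analogous.

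First I would prove $(\mathcal{A}_p\zeta)^+\subseteq\mathcal{E}^+$. The defining relations \eqref{eq:algebra A} of $\mathcal{A}_p$ are exactly the conditions that let one pull an element $a\in\mathcal{A}_p$ through the $\widehat{x_p}$-box on the appropriate side; the resulting planar manipulation (analogous to the one carried out for $\mathcal{C}$ in the proof of Theorem~\ref{thm:center}(2)) gives
\[
(a\zeta)*\widehat{x_p}=a\,(\zeta*\widehat{x_p})=r\,a\zeta ,
\]
where $\zeta*\widehat{x_p}=r\zeta$ comes from \S\ref{subsec:maximal support}. Hence $\mathcal{A}_p\zeta\subseteq\mathcal{E}_p$, and if in addition $a\zeta\geq0$ then $a\zeta\in\mathcal{E}_p^+=\mathcal{E}^+$. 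Note this direction uses no hypothesis on $y_1$. For the reverse inclusion $\mathcal{E}^+\subseteq(\mathcal{A}_p\zeta)^+$ I would invoke the standing hypothesis $p_{\max}\wedge\overline{\mathcal{R}(y_1)}^\perp=0$ only through Theorem~\ref{thm:C star structure}, which yields $\mathcal{E}^+=\zeta^{1/2}\mathcal{C}_\zeta^+\zeta^{1/2}$. Fixing $w\in\mathcal{C}_\zeta^+$, Lemma~\ref{lem:algebras after cut down} gives $pw=\zeta^{-1/2}a\zeta^{1/2}$ for some $a\in\mathcal{A}_p$, and since $\mathcal{R}(\zeta)=p=p_{\max}$ we have $\zeta^{1/2}p=\zeta^{1/2}$ and $pa=a$, so that
\[
\zeta^{1/2}w\zeta^{1/2}=\zeta^{1/2}(pw)\zeta^{1/2}=\zeta^{1/2}\zeta^{-1/2}a\zeta^{1/2}\zeta^{1/2}=a\zeta .
\]
Because $w\geq0$, the element $a\zeta=\zeta^{1/2}w\zeta^{1/2}$ is positive, hence $a\zeta\in(\mathcal{A}_p\zeta)^+$. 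Combining the two inclusions gives $\mathcal{E}^+=(\mathcal{A}_p\zeta)^+$, and the symmetric argument gives $\mathcal{E}^+=(\zeta\mathcal{B}_p)^+$.

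The real work has already been done in Theorems~\ref{thm:multiplicative structure 2} and~\ref{thm:C star structure} and in Lemma~\ref{lem:algebras after cut down}, so both inclusions here are essentially bookkeeping. I expect the only genuinely delicate points to be: (a) justifying that the exchange relations defining $\mathcal{A}_p$ really produce $(a\zeta)*\widehat{x_p}=a(\zeta*\widehat{x_p})$, which requires the $\widehat{x_p}$-normalization ensuring $\zeta$ is a true eigenvector of $x_p$; and (b) keeping all compressions and square-root inverses $\zeta^{\pm1/2}$ strictly inside the corner $p\,\mathscr{P}_{1,\pm}\,p$, where $\zeta$ is invertible, so that identities such as $\zeta^{1/2}p=\zeta^{1/2}$ and $pa=a$ are legitimate. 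I would make sure, exactly as in Theorems~\ref{thm:multiplicative structure 2} and~\ref{thm:C star structure}, that the hypothesis on $y_1$ enters the proof solely via Theorem~\ref{thm:C star structure}.
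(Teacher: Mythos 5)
Your overall strategy coincides with the paper's: both inclusions, with $\mathcal{E}^+=\zeta^{1/2}\mathcal{C}_\zeta^+\zeta^{1/2}$ (Theorem \ref{thm:C star structure}) plus Lemma \ref{lem:algebras after cut down} giving $\mathcal{E}^+\subseteq\mathcal{A}_p\zeta$, and the exchange relations giving $\mathcal{A}_p\zeta\subseteq\mathcal{E}$. However, there is one genuine gap in your reverse inclusion: you justify the last step of
\[
\zeta^{1/2}w\zeta^{1/2}=\zeta^{1/2}\zeta^{-1/2}a\zeta^{1/2}\zeta^{1/2}=a\zeta
\]
by the claim that $pa=a$ for $a\in\mathcal{A}_p$. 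That claim is false in general. Since $\zeta$ is invertible only in the corner $p\mathscr{P}_{1,\pm}p$, what the chain actually yields is $\zeta^{1/2}w\zeta^{1/2}=pa\zeta$, and $\mathcal{A}_p$ is \emph{not} contained in $p\mathscr{P}_{1,\pm}p$: in the spin-model/CP-map picture of Remark \ref{rem:three commutant}, $\mathcal{A}_p$ is the commutant of the compressed Kraus operators $\{pF_j^*p\}$, so it contains all of $(I-p)M_n(\mathbb{C})(I-p)$, on which $pa=a$ fails badly. What is true, and what you actually need, is the weaker identity $pa\zeta=a\zeta$.

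This is exactly the point the paper handles as the \emph{first} step of its proof, which your write-up omits: for $v\in\mathcal{A}_p$ one has $\mathcal{R}(v\zeta)=\mathcal{R}(v\zeta^{1/2})=\mathcal{R}(v\zeta v^*)\leq p_{\max}$, because $v\zeta v^*\in\mathcal{E}^+$ by Theorem \ref{thm:multiplicative structure 2}; hence $p\mathcal{A}_p\zeta=\mathcal{A}_p\zeta$ and the element $pa\zeta$ you produce is indeed equal to $a\zeta\in\mathcal{A}_p\zeta$. With this one additional observation your argument closes up and agrees with the paper's; note also that it shows the hypothesis on $y_1$ enters not only through Theorem \ref{thm:C star structure} but also (via Theorem \ref{thm:multiplicative structure 2}) in establishing $p\mathcal{A}_p\zeta=\mathcal{A}_p\zeta$. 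Your forward inclusion $(\mathcal{A}_p\zeta)^+\subseteq\mathcal{E}^+$ is fine and matches the paper's assertion $\mathcal{A}_p\zeta\subseteq\mathcal{E}$.
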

\begin{proof}
For any $v\in\mathcal{A}_p$, we have $\mathcal{R}(v\zeta)=\mathcal{R}(v\zeta^{1/2})=\mathcal{R}(v\zeta v^*)\leq p$. The last inequality is due to $v\zeta v^*\in\mathcal{E}^+$ in  Theorem \ref{thm:multiplicative structure 2}. 
So $p\mathcal{A}_p\zeta=\mathcal{A}_p\zeta$. 
By Theorem \ref{thm:C star structure} and Lemma \ref{lem:algebras after cut down}, we know that
$\mathcal{E}^+=\zeta^{1/2}\mathcal{C}_\zeta^+\zeta^{1/2}
\subseteq p\mathcal{A}_{p}\zeta=\mathcal{A}_{p}\zeta$. On the other hand $\mathcal{A}_{p}\zeta\subseteq \mathcal{E}$, so $(\mathcal{A}_{p}\zeta)^+\subseteq \mathcal{E}^+$.
Thus $ \mathcal{E}^+= (\mathcal{A}_{p}\zeta)^+$. 
Similarly, $\mathcal{E}^+=(\zeta\mathcal{B}_p)^+$. 
\end{proof}
\begin{remark}
    Let $x\in\mathscr{P}_{2,\pm}$ be a $\mathfrak{F}$-positive element such that
      $\widehat{x}:$ $\mathscr{P}_{1,\pm}\to\mathscr{P}_{1,\pm}$, $y\mapsto y\ast \widehat{x}$, 
  defines a trace-preserving, completely positive map. Theorems \ref{thm:multiplicative structure 2} and \ref{thm:multiplicative structure 3}   give the algebraic structures of the PF eigenspace of $x$. 
\end{remark}

\begin{remark}
    Let $x\in\mathscr{P}_{2,\pm}$ be a $\mathfrak{F}$-positive element such that
      \begin{align}
    \raisebox{-0.55cm}{
\begin{tikzpicture}[scale=1.2]
\draw [blue] (0,0) rectangle (0.7, 0.5);
\node at (0.35, 0.25) {$\widehat{x}$};
\draw (-0.15, 0)--(-0.15, 0.5) (0.55, 0)--(0.55, -0.2) (0.55, 0.5)--(0.55, 0.7);
\draw (0.15, 0.5) .. controls +(0, 0.3) and +(0, 0.3) .. (-0.15, 0.5);
\draw (0.15, 0) .. controls +(0, -0.3) and +(0, -0.3) .. (-0.15, 0);
\end{tikzpicture}}=I.
\end{align}
  Then $\widehat{x}:$ $\mathscr{P}_{1,\pm}\to\mathscr{P}_{1,\pm}$, $y\mapsto y\ast \widehat{x}$, 
  defines a unital completely positive map. In \cite{ChoEff77}, Choi and Effros proved that the PF eigenspace $\mathcal{E}$ is a $C^*$-algebra with respect to the multiplication 
  $\circ$ defined as follows: 
  \begin{align*}
     y_1\circ y_2= \lim_{k\to\infty}\frac{1}{k+1}\sum_{j=0}^k\frac{(y_1y_2)\ast\widehat{x}^{\ast j}}{r^{j+1}}
  \end{align*}
for any $y_1,y_2\in\mathcal{E}$. 
Note that a trace-preserving, completely positive map is the adjoint of a unital completely positive map.
Here we are able to consider  completely positive maps besides trace-preserving case and describe the C$^*$-algebraic structures and algebraic structures of their PF eigenspaces.
\end{remark}

\section{Quantum Error Correction}\label{sec:QEC}
Quantum error correction provides a method to protect quantum information. Developing robust quantum error correction is necessary for the realization of scalable quantum computing.  The first quantum error correction scheme for a single qubit state was proposed by Shor~\cite{Shor}. 
Inspired by the PF Theorem, we encode logical qubits by mixed states and we prove Theorem \ref{thm:PF to QEC} for quantum error corrections of mixed states. 
Using the multiplicative structure of the PF eigenspace, these mixed states in the PF eigenspace could be regarded as the tensor product of pure states and a fixed ancillar mixed state. 
In \S\ref{Sec:ContrastKL} we contrast this approach  with the Knill-Laflamme Theorem \ref{thm:Knill-Laflamme}, that characterizes quantum error corrections of pure states. 
 
\subsection{Quantum Error Correction} \label{subsec:QEC}
A quantum channel $\Phi$ on $M_n(\mathbb{C})$ is a completely positive, trace preserving (CPTP) linear map. (Sometimes one allows trace non-increasing maps.)

 We introduce some basic notions of error correction.
Let $\Phi_\mathcal{E}$ be a  quantum channel with Kraus operators $\{E_i\}$ representing errors.  Let $D\in M_n(\mathbb{C})$ be a density operator. Error correction means there exists a   completely-positive, trace-preserving map $\Phi_\mathcal{R}$ such that $\Phi_\mathcal{R}\Phi_\mathcal{E}(D)=D$. In many cases, the error $\Phi_\mathcal{E}$ is allowed to be trace-non-increasing, which is called a quantum operation (See Section 8.2 in \cite{NC10}). Error correction with probability means there exists a trace-non-increasing,  completely-positive map $\Phi_\mathcal{R}$ such that $\Phi_\mathcal{R}\Phi_\mathcal{E}(D)=rD$, where $r\leq1$ is the spectral radius of the composition $\Phi_\mathcal{R}\Phi_\mathcal{E}$.  If  there exists a density $D\in M_n(\mathbb{C})$ such that
    ${\rm Tr}(\Phi_\mathcal{R}\Phi_\mathcal{E}(D))<1$, then the quantum operation $\Phi_\mathcal{R}\Phi_\mathcal{E}$
    is non-trace-preserving. In this case other measurement outcomes may occur  and we get the correct output with probability $r$.  We refer the readers to e.g. Section 10 in \cite{NC10} for more details.

In this section, we will apply the structure of PF eigenspace to study quantum error correction. Precisely, we will prove Theorem~\ref{thm:PF to QEC}.
\begin{lemma}\label{lem:two commutant same}
     Suppose $\Phi_\mathcal{E}$: $M_n(\mathbb{C})\to M_n(\mathbb{C})$ is a completely positive map with Kraus operators $\{E_i\}_{i\in\mathcal{I}}$. Let $p\in M_n(\mathbb{C})$ be a projection.
    Let $E_jp=u_j|E_jp|$ be the polar decomposition. Then
     \begin{align*}
         \{u_j^* E_ip,pE_i^*u_j:\ i,j\in \mathcal{I}\}'\cap pM_n(\mathbb{C})p= \{pE_i^*E_jp:\ i,j\in\mathcal{I}\}'\cap pM_n(\mathbb{C})p.
     \end{align*}
     \begin{proof}
For any $y\in \{u_j^* E_ip,pE_i^*u_j:\ i,j\in \mathcal{I}\}'\cap pM_n(\mathbb{C})p$,
\begin{align}
    [y,pE_i^*u_ju_j^* E_jp]=0,\quad \forall i,j\in \mathcal{I}.
    \end{align}
    Since $u_ju_j^*=\mathcal{R}(E_jp)$, $[y,pE_i^* E_jp]=0$. On the other hand, for any $y\in \{pE_i^*E_jp:\ i,j\in\mathcal{I}\}'\cap pM_n(\mathbb{C})p$, since $[y,|E_jp|^{2}]=0$
    $[y,|E_jp|^{-1}]=0$. Then
    \begin{align*}
        [y,pE_i^*u_j]=[y,pE_i^*E_jp|E_jp|^{-1}]=0.
    \end{align*}
    This completes the proof.
     \end{proof}
\end{lemma}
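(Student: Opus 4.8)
The plan is to show the two relative commutants inside the corner algebra $pM_n(\mathbb{C})p$ coincide by proving mutual inclusion, using the polar decompositions $E_j p = u_j|E_j p|$ as the dictionary between the two generating sets. I first record the facts I will lean on: writing $|E_j p| = (pE_j^*E_j p)^{1/2}$, the partial isometry $u_j$ has final projection $u_j u_j^* = \mathcal{R}(E_j p)$ and initial projection $u_j^* u_j = \mathcal{R}(|E_j p|)\le p$; consequently $u_j u_j^* E_j p = E_j p$ and $E_j p\,|E_j p|^{-1} = u_j$, where $|E_j p|^{-1}$ denotes the generalized inverse supported on $\mathcal{R}(|E_j p|)$. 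Both generating sets are self-adjoint (the adjoint of $pE_i^*u_j$ is $u_j^*E_i p$, and the adjoint of $pE_i^*E_j p$ is $pE_j^*E_i p$), so each relative commutant is a $*$-closed subalgebra of $pM_n(\mathbb{C})p$, which lets me verify commutation relations one adjoint pair at a time.

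For the inclusion $\subseteq$, suppose $y$ lies in the first relative commutant, so $y$ commutes with every $pE_i^*u_j$ and every $u_j^*E_i p$. Then $y$ commutes with each product $(pE_i^*u_j)(u_j^*E_j p) = pE_i^*(u_j u_j^*)E_j p = pE_i^*E_j p$, the last equality using $u_j u_j^* E_j p = E_j p$. Hence $y$ lies in the second relative commutant.

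For the inclusion $\supseteq$, suppose $y$ commutes with every $pE_i^*E_j p$; taking $i=j$ it commutes with $|E_j p|^2 = pE_j^*E_j p$, hence with every continuous function of $|E_j p|^2$, and in particular with $|E_j p|^{-1}$. Using $E_j p\,|E_j p|^{-1} = u_j$ I rewrite $pE_i^*u_j = pE_i^*E_j p\,|E_j p|^{-1}$ as a product of two operators commuting with $y$, giving $[y, pE_i^*u_j] = 0$; the symmetric identity $u_j^* = |E_j p|^{-1}pE_j^*$ (from $pE_j^* = |E_j p|u_j^*$) yields $u_j^*E_i p = |E_j p|^{-1}pE_j^*E_i p$ and hence $[y, u_j^*E_i p] = 0$. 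Thus $y$ lies in the first relative commutant, completing the argument.

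The step needing the most care is the handling of $|E_j p|$, which is in general not invertible: one must restrict to its support projection $\mathcal{R}(|E_j p|)$, read $|E_j p|^{-1}$ as the pseudo-inverse there, and confirm by the functional calculus that $[y,|E_j p|^2]=0$ upgrades to $[y,|E_j p|^{-1}]=0$. The remaining subtlety is purely bookkeeping with the source and range projections $u_j^*u_j$ and $u_j u_j^*$, ensuring the identities $u_j u_j^* E_j p = E_j p$ and $E_j p\,|E_j p|^{-1}=u_j$ hold exactly as stated.
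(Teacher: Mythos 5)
Your proof is correct and follows essentially the same route as the paper's: one inclusion via $pE_i^*E_jp = (pE_i^*u_j)(u_j^*E_jp)$ using $u_ju_j^*=\mathcal{R}(E_jp)$, and the other via functional calculus on $|E_jp|^2$ to get $[y,|E_jp|^{-1}]=0$ and the factorization $pE_i^*u_j = pE_i^*E_jp\,|E_jp|^{-1}$. Your explicit attention to the pseudo-inverse being supported on $\mathcal{R}(|E_jp|)$ and to the $*$-closure of the generating sets is a welcome tightening of details the paper leaves implicit.
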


\begin{theorem}[\bf PF to QEC]\label{thm:PF to QEC}
Suppose $\Phi_\mathcal{E}$: $M_n(\mathbb{C})\to M_n(\mathbb{C})$ is a  completely positive trace-preserving map with Kraus operators $\{E_i\}_{i\in\mathcal{I}}$ representing errors. Suppose $p$ is a  projection  $M_n(\mathbb{C})$.
Then there exists  a  completely positive trace-non-increasing map $\Phi_\mathcal{R}$ 
and a matrix subalgebra $\mathcal{D}$ in 
$$\{ pE_i^*E_jp :i,j\in \mathcal{I}\}' \cap pM_n(\mathbb{C}) p,$$
such that  
$\zeta_p \mathcal{D}\neq 0$ and for any $ D\in \mathcal{D}$, we have
$$\Phi_\mathcal{R}\Phi_\mathcal{E}(\zeta_p D)= r \zeta_p D,\quad r\geq  \left\|\sum_{i\in \mathcal{I}} \mathcal{R}(E_ip)\right\|^{-1},$$
where $r$ is the PF eigenvalue of $\Phi_\mathcal{R}\Phi_\mathcal{E}(p\cdot p)$ and $\zeta_p$ is the maximal-support PF eigenvector of $\Phi_\mathcal{R}\Phi_\mathcal{E}(p\cdot p)$ defined in Equation \eqref{eq:maximal support eigenvector}. Moreover, $\zeta_p D=D\zeta_p$  for any $D\in\mathcal{D}$ and $\zeta_p D=0$ implies $D=0$.
\end{theorem}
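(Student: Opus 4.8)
The plan is to construct the recovery map $\Phi_\mathcal{R}$ explicitly from the polar decompositions $E_j p = u_j|E_jp|$, to realize the composite $\Phi := \Phi_\mathcal{R}\Phi_\mathcal{E}(p\cdot p)$ as a completely positive (hence $\mathfrak{F}$-positive) map on the corner $pM_n(\mathbb{C})p$, and then to read off every assertion from the eigenspace structure of Theorem \ref{thm:center}. First I would set $c = \|\sum_{i\in\mathcal{I}}\mathcal{R}(E_ip)\|^{-1}$ and define $\Phi_\mathcal{R}(X) = c\sum_{j\in\mathcal{I}}pu_j^* X u_j p$. Since $u_j p = u_j$ (the initial projection of $u_j$ lies under $p$), one computes $\Phi_\mathcal{R}^*(I) = c\sum_j u_j p u_j^* = c\sum_j\mathcal{R}(E_jp)$, whose norm is $1$; hence $\Phi_\mathcal{R}$ is completely positive and trace-non-increasing.

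The composite $\Phi$ then carries Kraus operators $\{\sqrt{c}\,pu_j^* E_i p\}_{i,j}$, so by Remark \ref{rem:three commutant} its commutant $C^*$-algebra is $\mathcal{C} = \{pu_j^* E_i p,\ pE_i^* u_j p\}'\cap pM_n(\mathbb{C})p$, which by Lemma \ref{lem:two commutant same} equals $\{pE_i^* E_j p\}'\cap pM_n(\mathbb{C})p$. Applying the maximal-support construction of Equation \eqref{eq:maximal support eigenvector} to $\Phi$ yields the PF eigenvalue $r = r(\Phi)$ and the maximal-support eigenvector $\zeta_p$ with $\mathcal{R}(\zeta_p) = p_{\max}$. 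I would then take $\mathcal{D}$ to be a matrix direct summand $\mathcal{C}_k\cong M_{n_k}(\mathbb{C})$ of $\mathcal{C} = \bigoplus_k\mathcal{C}_k$ whose central projection $p_{\mathcal{C}_k}$ satisfies $p_{\mathcal{C}_k}p_{\max}\neq 0$; such a $k$ exists because $0\neq p_{\max}\leq p$ and $p$ is the unit of $\mathcal{C}$.

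With this choice the three extra properties are immediate from Theorem \ref{thm:center}: part (1) gives $[\zeta_p, D] = 0$, and part (2) gives $\zeta_p D\in\mathcal{E}$, that is $\Phi(\zeta_p D) = r\zeta_p D$, for all $D\in\mathcal{D}$. For faithfulness I would invoke the tensor splitting $\{\zeta_p\}''\mathcal{D}\cong\{p_\mathcal{D}\zeta_p\}''\otimes\mathcal{D}$ of Theorem \ref{thm:center}(3): under this isomorphism $\zeta_p D = (p_\mathcal{D}\zeta_p)D$ corresponds to $\widetilde{\zeta}\otimes D'$ with $\widetilde{\zeta}$ a fixed nonzero positive element (since $p_\mathcal{D}\zeta_p = p_{\mathcal{C}_k}\zeta_p\neq 0$), so it vanishes only when $D' = 0$, i.e. $D = 0$; in particular $\zeta_p\mathcal{D}\neq 0$. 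It is worth emphasizing that faithfulness does \emph{not} require $p_\mathcal{D}\leq p_{\max}$: it is the multiplicity structure, not the support of $p_{\max}$, that does the work.

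The remaining quantitative claim, the bound $r\geq c$, is the step I expect to be the main obstacle, and I would prove it by testing $\Phi$ against $p$. Writing $\Phi(p) = c\sum_{i,j}pu_j^* E_i p E_i^* u_j p$, each summand is of the form $BB^*\geq 0$, so discarding the terms with $i\neq j$ leaves $\Phi(p)\geq c\sum_j pu_j^* E_j p E_j^* u_j p$. Using the polar decomposition one checks $pu_j^* E_j p E_j^* u_j p = pE_j^* E_j p$, whence the trace-preservation identity $\sum_j E_j^* E_j = I$ of the error channel gives $\Phi(p)\geq c\,p\bigl(\sum_j E_j^* E_j\bigr)p = c\,p$. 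Since $\Phi$ is positive and the corner $pM_n(\mathbb{C})p$ is a normal ordered space, this iterates to $\Phi^m(p)\geq c^m p$, so $r = r(\Phi) = \lim_m\|\Phi^m\|^{1/m}\geq c = \|\sum_i\mathcal{R}(E_ip)\|^{-1}$. The two points to watch throughout are that the normalization by $c$ keeps $\Phi_\mathcal{R}$ trace-non-increasing, and that all the structural results are applied to $\Phi$ regarded as a completely positive map on the matrix algebra $pM_n(\mathbb{C})p$ rather than on $M_n(\mathbb{C})$.
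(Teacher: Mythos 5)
Your construction is the same as the paper's: the recovery map $\Phi_\mathcal{R}$ built from the partial isometries $u_j^*$ normalized by $\bigl\|\sum_i\mathcal{R}(E_ip)\bigr\|$, the identification of the commutant via Lemma \ref{lem:two commutant same} and Remark \ref{rem:three commutant}, the appeal to Theorem \ref{thm:center} for commutation and the eigenvector property, and the estimate $\Phi_\mathcal{R}\Phi_\mathcal{E}(p)\geq c^{-1}p$ obtained by discarding the cross terms $i\neq j$ and using $\sum_j E_j^*E_j=I$. The only genuine divergence is the last step of the quantitative bound: the paper pairs the inequality $\Phi_p(p)\geq c^{-1}p$ against a right PF eigenvector $D_0$ (from Remark \ref{rem:one box PF}) and reads off $r\geq c^{-1}$ from $r\,tr(D_0)=tr(\Phi_p(p)D_0)\geq c^{-1}tr(D_0)$, whereas you iterate to $\Phi_p^m(p)\geq c^{-m}p$ and invoke Gelfand's spectral radius formula; both are correct, the paper's being a one-line pairing and yours avoiding the need to produce the right eigenvector at all. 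You are also more explicit than the paper about selecting $\mathcal{D}$ as a simple summand with $p_{\mathcal{D}}\zeta_p\neq 0$ and about deducing faithfulness from the tensor splitting in Theorem \ref{thm:center}(3), which is a welcome clarification of a point the paper passes over quickly.
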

\begin{proof}
Let  $\displaystyle c=\left\|\sum_{i\in \mathcal{I}} \mathcal{R}(E_ip)\right\|\leq |\mathcal{I}|$. 
We could define the completely positive map $\Phi_\mathcal{R}$ as follows:
\begin{align*}
\Phi_\mathcal{R}(D)=\sum_{j\in\mathcal{I}} R_j DR_j^*,\quad  D\in M_n(\mathbb{C}),
\end{align*}
where $R_j=c^{-1/2}u_j^*$ and $u_j$ is defined in Lemma \ref{lem:two commutant same}. Then
\begin{align*}
\sum_{j\in \mathcal{I}} R_j^* R_j=c^{-1}\sum_{j\in\mathcal{I}} u_j u_j^*=c^{-1}\sum_{j\in \mathcal{I}} \mathcal{R}(E_jp)\leq I,
\end{align*}
which implies that $\Phi_\mathcal{R}$ is trace-non-increasing.

We define $\Phi=\Phi_\mathcal{R}\Phi_\mathcal{E}$ and $\Phi_p=\Phi(p\cdot p)=p\Phi(p\cdot p)p$. Pictorially,
\begin{align*}
    \Phi_{p}:=
\raisebox{-1.6cm}{
\begin{tikzpicture}[yscale=1.5, xscale=2.2]
\draw [blue, fill=white] (0,0) rectangle (0.5, 0.5);
\node at (0.25, 0.25) {$\Phi$};
\draw (0.35, 0.5)--(0.5, 0.75)   (0.35, 0)--(0.5, -0.25);
\draw (0.15, 0.5)--(0, 0.75)  (0.15, 0)--(0, -0.25);
\begin{scope}[shift={(-0.2,0.75)}]
\draw [blue, fill=white] (0,0) rectangle (0.4, 0.4);
\node at (0.2, 0.2) {$p$};
\draw (0.2, 0.4)--(0.2, 0.65);
\end{scope}
\begin{scope}[shift={(0.3,0.75)}]
\draw [blue, fill=white] (0,0) rectangle (0.4, 0.4);
\node at (0.2, 0.2) {$\overline{p}$};
\draw (0.2, 0.4)--(0.2, 0.65);
\end{scope}
\begin{scope}[shift={(-0.2,-0.65)}]
\draw [blue, fill=white] (0,0) rectangle (0.4, 0.4);
\node at (0.2, 0.2) {$p$};
\draw (0.2,0)--(0.2,- 0.25);
\end{scope}
\begin{scope}[shift={(0.3,-0.65)}]
\draw [blue, fill=white] (0,0) rectangle (0.4, 0.4);
\node at (0.2, 0.2) {$\overline{p}$};
\draw (0.2,0)--(0.2,- 0.25);
\end{scope}
\end{tikzpicture}}.
\end{align*}
Recall that the C$^*$-algebra $\mathcal{C}_p$ is defined as follows:
 \begin{align*}
    \mathcal{C}_p:=\left\{D\in M_n(\mathbb{C}):\ 
    \raisebox{-1.1cm}{
\begin{tikzpicture}[scale=1.4]
\draw (0.2, -0.3)--(0.2, 1.25)   (0.5, -0.3)--(0.5, 1.25) ;
\draw [blue, fill=white] (0,0) rectangle (0.7, 0.5);
\node at (0.35, 0.25) {$\Phi_p$};
\begin{scope}[shift={(0.35, 0.7)}]
\draw [blue, fill=white] (0,0) rectangle (0.35, 0.35);
\node at (0.175, 0.175) {$\overline{D}$};
\end{scope}
\end{tikzpicture}}
=
\raisebox{-0.9cm}{
\begin{tikzpicture}[scale=1.4]
\draw (0.2, -0.75)--(0.2, .8)   (0.5, -0.75)--(0.5, 0.8) ;
\draw [blue, fill=white] (0,0) rectangle (0.7, 0.5);
\node at (0.35, 0.25) {$\Phi_p$};
\begin{scope}[shift={(0.35, -0.55)}]
\draw [blue, fill=white] (0,0) rectangle (0.35, 0.35);
\node at (0.175, 0.175) {$\overline{D}$};
\end{scope}
\end{tikzpicture}},\  \raisebox{-1.1cm}{
\begin{tikzpicture}[scale=1.4]
\draw (0.2, -0.3)--(0.2, 1.25)   (0.5, -0.3)--(0.5, 1.25) ;
\draw [blue, fill=white] (0,0) rectangle (0.7, 0.5);
\node at (0.35, 0.25) {$\Phi_p$};
\begin{scope}[shift={(0, 0.7)}]
\draw [blue, fill=white] (0,0) rectangle (0.35, 0.35);
\node at (0.175, 0.175) {$D$};
\end{scope}
\end{tikzpicture}}
=
\raisebox{-0.9cm}{
\begin{tikzpicture}[scale=1.4]
\draw (0.2, -0.75)--(0.2, .8)   (0.5, -0.75)--(0.5, 0.8) ;
\draw [blue, fill=white] (0,0) rectangle (0.7, 0.5);
\node at (0.35, 0.25) {$\Phi_p$};
\begin{scope}[shift={(0, -0.55)}]
\draw [blue, fill=white] (0,0) rectangle (0.35, 0.35);
\node at (0.175, 0.175) {$D$};
\end{scope}
\end{tikzpicture}}
    \right\}.
\end{align*}
By Theorem \ref{thm:center}, we have that $\zeta_p$ commutes with the algebra $\mathcal{C}_p$ and 
\begin{align*}
   \Phi_\mathcal{R}\Phi_{\mathcal{E}}(\zeta_p D)=\Phi_p(\zeta_p D)= r \zeta_p D
\end{align*}
for any $D\in\mathcal{C}_p$. By Remark \ref{rem:three commutant},
\begin{align*}
     \{u_j^* E_ip,pE_i^*u_j:\ i,j\in \mathcal{I}\}'\cap M_n(\mathbb{C})=\mathcal{C}_p.
\end{align*}
By Lemma \ref{lem:two commutant same}, 
\begin{align*}
    \{ pE_i^*E_jp :i,j\in \mathcal{I}\}' \cap pM_n(\mathbb{C}) p\subseteq\mathcal{C}_p.
\end{align*}
Thus, for any $D\in\mathcal{D}$, $\zeta_p D=D\zeta_p$. Since $\mathcal{D}$ is a matrix algebra and 
$\zeta_p \mathcal{D}\neq0$, $\zeta_p D=0$ implies $D=0$.

We next show that $r\geq c^{-1}$. Indeed,
\begin{align*}
\Phi_p(p)&=\sum_{i,j\in\mathcal{I}}R_jE_ipE_i^*R_j^*\\
&=c^{-1}\sum_{i,j\in\mathcal{I}}u_j^*E_ipE_i^*u_j\\
&=c^{-1}\sum_{j\in\mathcal{I}}u_j^*\sum_{i\in\mathcal{I}}(E_ipE_i^*)u_j\\
&\geq c^{-1}\sum_{j\in\mathcal{I}}u_j^*E_jpE_j^*u_j\\
&=c^{-1}\sum_{j\in\mathcal{I}}pE_j^*E_jp \\
&=c^{-1}p.
\end{align*}
By Remark \ref{rem:one box PF}, there exists a nonzero positive-definite matrix $D_0\in M_n(\mathbb{C})$ such that
\begin{align*}
    \raisebox{-0.9cm}{
\begin{tikzpicture}[scale=1.5]
\begin{scope}[shift={(0.95, 0)}]
\draw [blue] (0,0) rectangle (0.35, 0.5);
\node at (0.175, 0.25) {$D_0$};
\end{scope}
\draw [blue] (0,0) rectangle (0.7, 0.5);
\node at (0.35, 0.25) {$\widehat{\Phi_p}$};
\draw  (0.2, 0)--(0.2, -0.4) (0.2, 0.5)--(0.2, 0.9);
\draw (0.5, 0.5) .. controls +(0, 0.4) and +(0, 0.4) .. (1.125, 0.5);
\draw (0.5, 0) .. controls +(0, -0.4) and +(0, -0.4) .. (1.125, 0);
\end{tikzpicture}}
=r
\raisebox{-0.9cm}{
\begin{tikzpicture}[scale=1.5]
\begin{scope}[shift={(1.5, 0)}]
\draw [blue] (0,0) rectangle (0.35, 0.5);
\node at (0.175, 0.25) {$D_0$};
\draw  (0.175, 0)--(0.175, -0.4) (0.175, 0.5)--(0.175, 0.9);
\end{scope}
\end{tikzpicture}}.
\end{align*}
 Then
 \begin{align*}
r\raisebox{-0.9cm}{
\begin{tikzpicture}[scale=1.5]
\begin{scope}[shift={(0.025, 0)}]
\draw [blue] (0,0) rectangle (0.35, 0.5);
\node at (0.175, 0.25) {$D_0$};
\end{scope}
\draw (0.2, 0.5) .. controls +(0, 0.4) and +(0, 0.4) .. (-0.425, 0.5)--(-0.425, 0);
\draw (0.2, 0) .. controls +(0, -0.4) and +(0, -0.4) .. (-0.425, 0);
\end{tikzpicture}}
=r\raisebox{-0.9cm}{
\begin{tikzpicture}[scale=1.5]
\begin{scope}[shift={(-0.6, 0)}]
\draw [blue] (0,0) rectangle (0.35, 0.5);
\node at (0.175, 0.25) {$p$};
\end{scope}
\begin{scope}[shift={(0.025, 0)}]
\draw [blue] (0,0) rectangle (0.35, 0.5);
\node at (0.175, 0.25) {$D_0$};
\end{scope}
\draw (0.2, 0.5) .. controls +(0, 0.4) and +(0, 0.4) .. (-0.425, 0.5);
\draw (0.2, 0) .. controls +(0, -0.4) and +(0, -0.4) .. (-0.425, 0);
\end{tikzpicture}}=
\raisebox{-0.9cm}{
\begin{tikzpicture}[scale=1.5]
\begin{scope}[shift={(-0.6, 0)}]
\draw [blue] (0,0) rectangle (0.35, 0.5);
\node at (0.175, 0.25) {$p$};
\end{scope}
\begin{scope}[shift={(0.95, 0)}]
\draw [blue] (0,0) rectangle (0.35, 0.5);
\node at (0.175, 0.25) {$D_0$};
\end{scope}
\draw [blue] (0,0) rectangle (0.7, 0.5);
\node at (0.35, 0.25) {$\widehat{\Phi_p}$};
\draw (0.2, 0.5) .. controls +(0, 0.4) and +(0, 0.4) .. (-0.425, 0.5);
\draw (0.2, 0) .. controls +(0, -0.4) and +(0, -0.4) .. (-0.425, 0);
\draw (0.5, 0.5) .. controls +(0, 0.4) and +(0, 0.4) .. (1.125, 0.5);
\draw (0.5, 0) .. controls +(0, -0.4) and +(0, -0.4) .. (1.125, 0);
\end{tikzpicture}}\geq c^{-1}\raisebox{-0.9cm}{
\begin{tikzpicture}[scale=1.5]
\begin{scope}[shift={(-0.6, 0)}]
\draw [blue] (0,0) rectangle (0.35, 0.5);
\node at (0.175, 0.25) {$p$};
\end{scope}
\begin{scope}[shift={(0.025, 0)}]
\draw [blue] (0,0) rectangle (0.35, 0.5);
\node at (0.175, 0.25) {$D_0$};
\end{scope}
\draw (0.2, 0.5) .. controls +(0, 0.4) and +(0, 0.4) .. (-0.425, 0.5);
\draw (0.2, 0) .. controls +(0, -0.4) and +(0, -0.4) .. (-0.425, 0);
\end{tikzpicture}}
= c^{-1}\raisebox{-0.9cm}{
\begin{tikzpicture}[scale=1.5]
\begin{scope}[shift={(0.025, 0)}]
\draw [blue] (0,0) rectangle (0.35, 0.5);
\node at (0.175, 0.25) {$D_0$};
\end{scope}
\draw (0.2, 0.5) .. controls +(0, 0.4) and +(0, 0.4) .. (-0.425, 0.5)--(-0.425, 0);
\draw (0.2, 0) .. controls +(0, -0.4) and +(0, -0.4) .. (-0.425, 0);
\end{tikzpicture}}.
\end{align*}
Thus $r\geq c^{-1}$. This completes the proof. 
\end{proof}
\begin{remark}
  Theorem \ref{thm:PF to QEC} is also applicable to completely positive  trace-non-increasing map $\Phi_\mathcal{E}$.
  If ${\rm Tr}\circ \Phi_\mathcal{E} \geq s {\rm Tr} $, for some $s>0$, then 
  $$r\geq  s \left\|\sum_{i\in \mathcal{I}} \mathcal{R}(E_ip)\right\|^{-1}. $$
\end{remark}

\subsection{Comparison with Knill-Laflamme Theory} \label{Sec:ContrastKL} 

Here we relate our Theorem \ref{thm:PF to QEC} to the well-known result of Knill and Laflamme (KL).
\begin{theorem}[\bf Knill-Laflamme \cite{KL97}]\label{thm:Knill-Laflamme}
Suppose a noisy environment is a CPTP map $\Phi_\mathcal{E}$ with Kraus operators $\{E_i\}$. For a projection $p \in M_{2^n}(\mathbb{C})$,
there is a CPTP recovery map $\Phi_\mathcal{R}$ such that
$$\Phi_\mathcal{R} \Phi_\mathcal{E} (D)=D, ~\forall D\in pM_{2^n}(\mathbb{C}) p,$$
if and only if
$$pE_i^*E_jp=\lambda_{i.j}p, ~\lambda_{i.j}\in \bC~, \forall i,j.$$
\end{theorem}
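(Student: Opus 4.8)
The plan is to establish both implications of this biconditional by the standard route, organized around the unitary freedom in operator-sum (Kraus) representations, which the paper has already invoked in Remark \ref{rem:three commutant} via \cite[Theorem 8.2]{NC10}. First I would treat the sufficiency direction (the Knill-Laflamme conditions imply correctability), and then the necessity direction, which is the more delicate of the two.

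For sufficiency, I would start from the hypothesis $pE_i^*E_jp=\lambda_{ij}p$ and observe that the matrix $\Lambda=(\lambda_{ij})$ is positive semidefinite, since for any scalars $(a_i)$ one has $\sum_{i,j}\overline{a_i}a_j\lambda_{ij}\,p=\big(\sum_i a_iE_ip\big)^*\big(\sum_j a_jE_jp\big)\geq 0$. Diagonalizing $\Lambda=u\,{\rm diag}(d_k)\,u^{*}$ with $u$ unitary and passing to the mixed Kraus operators $F_k=\sum_i u_{ik}E_i$ gives $pF_k^{*}F_\ell p=d_k\delta_{k\ell}p$, so the operators $F_kp$ have mutually orthogonal ranges and polar decompositions $F_kp=\sqrt{d_k}\,V_k$ with $V_k$ a partial isometry whose initial space is $\mathcal{R}(p)$; this is exactly the structure produced by the polar decomposition $E_jp=u_j|E_jp|$ used in Lemma \ref{lem:two commutant same}. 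I would then define the recovery channel by the Kraus operators $\{V_k^{*}\}$ together with one extra operator $R_0$ satisfying $R_0^{*}R_0=I-\sum_kV_kV_k^{*}$ to make $\Phi_\mathcal{R}$ trace-preserving. Since $R_0$ annihilates each $\mathcal{R}(V_k)$, a direct computation using $V_k^{*}V_\ell=\delta_{k\ell}p$ gives $\Phi_\mathcal{R}\Phi_\mathcal{E}(pDp)=\big(\sum_kd_k\big)pDp$, and the trace preservation of $\Phi_\mathcal{E}$ (which forces $\sum_i\lambda_{ii}=1$, i.e.\ ${\rm Tr}(\Lambda)=\sum_kd_k=1$) yields $\Phi_\mathcal{R}\Phi_\mathcal{E}(D)=D$ on $pM_{2^n}(\mathbb{C})p$.

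For necessity I would work with a CPTP recovery $\Phi_\mathcal{R}$ with Kraus operators $\{R_k\}$, so that $\Phi:=\Phi_\mathcal{R}\Phi_\mathcal{E}$ has Kraus operators $\{R_kE_i\}$ and acts as the identity on $pM_{2^n}(\mathbb{C})p$. Setting $D=p$ yields $\sum_{k,i}(R_kE_ip)(R_kE_ip)^{*}=p$; since a sum of positive operators equal to the projection $p$ forces each summand to have range inside $\mathcal{R}(p)$, I obtain $pR_kE_ip=R_kE_ip$. Thus $\{G_{ki}:=R_kE_ip\}$ is an operator-sum representation of the identity channel on the corner $pM_{2^n}(\mathbb{C})p\cong M_d(\mathbb{C})$, whose one-term representation is simply $\{p\}$. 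By the uniqueness of Kraus representations up to an isometry, every $G_{ki}$ must be a scalar multiple of $p$, i.e.\ $R_kE_ip=c_{ki}p$. Finally, using trace preservation of $\Phi_\mathcal{R}$ in the form $\sum_kR_k^{*}R_k=I$, I would run the chain of identities
\begin{align*}
pE_i^*E_jp=\sum_k pE_i^*R_k^*R_kE_jp=\sum_k(R_kE_ip)^*(R_kE_jp)=\Big(\sum_k\overline{c_{ki}}c_{kj}\Big)p,
\end{align*}
which is the Knill-Laflamme condition with $\lambda_{ij}=\sum_k\overline{c_{ki}}c_{kj}$.

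The main obstacle is the necessity direction, and specifically the step identifying $\{R_kE_ip\}$ as Kraus operators of the identity channel and concluding $R_kE_ip=c_{ki}p$. This requires care with two prerequisite facts: the range-containment argument (a sum of positive operators equal to a projection has every summand supported under that projection), and the uniqueness-up-to-isometry of operator-sum representations, where one must pad the shorter representation $\{p\}$ with zeros before comparing. Once these are in place the scalar extraction is routine, and the sufficiency direction reduces to the constructive bookkeeping above, reusing the polar-decomposition machinery already set up for Theorem \ref{thm:PF to QEC}.
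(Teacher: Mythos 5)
Your proof is correct. Note, however, that the paper does not prove this statement at all: Theorem \ref{thm:Knill-Laflamme} is quoted as an external result with the citation \cite{KL97}, and the surrounding text only compares it with Theorem \ref{thm:PF to QEC}. So there is no in-paper argument to compare against; what you have written is the standard textbook proof (essentially the one in \cite{NC10}, Chapter 10). Both directions check out: in the sufficiency direction the diagonalization of $\Lambda=(\lambda_{ij})$, the orthogonality $V_k^*V_\ell=\delta_{k\ell}p$, the completion operator $R_0=(I-\sum_k V_kV_k^*)^{1/2}$ (which indeed kills each $\mathcal{R}(V_k)$), and the normalization $\sum_k d_k=\operatorname{Tr}(\Lambda)=\sum_i\lambda_{ii}=1$ from $\sum_iE_i^*E_i=I$ are all handled correctly; in the necessity direction the two delicate points you flag --- that a sum of positive operators equal to $p$ forces each summand under $p$, and the zero-padding needed before applying the unitary freedom of Kraus representations to conclude $R_kE_ip=c_{ki}p$ --- are exactly the right ones, and your treatment of them is sound.
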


In our framework, the KL condition is equivalent to
$$\mathcal{D}=\{pE_i^*E_jp\}' \cap p M_{2^n}(\mathbb{C}) p=pM_{2^n}(\mathbb{C}) p.$$
This is a strong constraint to construct $p$. In this case, the correctable information is $pM_{2^n}(\mathbb{C}) p$.
\smallskip
In Theorem \ref{thm:PF to QEC}, there is no constraint on $p$ and the recovery map always exists.
The correctable information is 
$$\mathcal{D}\subseteq \{pE_i^*E_jp\}' \cap pM_{2^n}(\mathbb{C}) p.$$
We get the correct output in probability $r$. Fortunately, $r^{-1}$ is bounded by the number of Kraus operators of $\Phi_{\mathcal{E}}$, which has at most polynomial growth $O(n^d)$ for an $[[n,k,d]]$ code. 
The probability could be compensated from polynomial samples.
In principle, $p$ induces a good quantum error correction code if $\mathcal{D}$ is large.

The PF theorem suggested us to study quantum error correcting codes (QECC) whose logical qubits are mixed states.
The recovery map in Theorem \ref{thm:PF to QEC} is similar to the KL recovery map in the KL theory. This special choice of recovery map results in a relation between our QECC of mixed states and the KL QECC of pure states as follows. For any minimal projection $q$ in $\mathcal{D}'\cap pM_{2^n}(\mathbb{C})p$, the projection $q$ satisfies the KL condition. The construction of a projection $q$ verifying the KL condition becomes easier through the construction of $p$ in Theorem \ref{thm:PF to QEC}. 
Encoding logical qubits $\mathcal{D}$ by mixed states from the choice of $p$ always led to a KL type encoding of $\mathcal{D}$ by pure states from $q$. 
If we consider a general recovery map, not necessarily of KL type, then it is not clear whether a QECC of mixed states always leads to a QECC of pure states. 

\section*{Acknowlednements}
We thank Jochen Gl\"{u}ck for extensive comments on an earlier version.

 \bibliographystyle{plain}

\end{document}